\documentclass[12pt]{article}

\usepackage[numbers,sort]{natbib}

\usepackage{fullpage}

\usepackage[pdftex]{graphicx}
\usepackage{amsfonts}
\usepackage{amscd}
\usepackage{indentfirst}
\usepackage{amssymb,amsthm, amsgen, amstext, amsbsy, amsopn,yfonts}
\usepackage{amsmath}
\usepackage{mathrsfs}
\usepackage{ulem} %



\newtheorem{thm}{Theorem}[section]
\newtheorem{lemma}[thm]{Lemma}
\newtheorem{prop}[thm]{Proposition}
\newtheorem{cor}[thm]{Corollary}
\newtheorem{defin}[thm]{Definition}
\newtheorem{rem}[thm]{Remark}

\newtheorem{question}[thm]{Question}

\newtheorem{claim}{Claim}

\newcommand{\N}{{\mathbb{N}}}

\newcommand{\R}{{\mathbb{R}}}

\newcommand{\Z}{{\mathbb{Z}}}

\newcommand{\s}{{\text{stab}}}

\newcommand{\supp}{{\rm supp\ }}


%

%





\newcommand{\bal}{\begin{aligned}}
\newcommand{\enbal}{\end{aligned}}
\newcommand{\be}{\begin{equation}}
\newcommand{\ee}{\end{equation}}

\normalem





\usepackage{latexsym,amsmath,amsthm,amssymb,amscd,amsfonts}
\usepackage[usenames]{color}
\usepackage{tikz,tikz-3dplot,pgfplots}  

\usetikzlibrary{decorations.pathreplacing,calligraphy}
\usetikzlibrary{decorations.markings}
\usetikzlibrary{decorations.pathmorphing}

\usetikzlibrary{3d} 
\usetikzlibrary{arrows,calc}
\usepackage{dsfont} 
\usepackage{subcaption}
\usepackage{array}
\usepackage{stackrel} 

\usepackage{hyperref} 
\usepackage{float} 
\usepackage{wrapfig}
\usepackage{mathtools} 



\usetikzlibrary{hobby}
\usepgfplotslibrary{fillbetween}
\usetikzlibrary{patterns}

\DeclareMathOperator{\const}{const}
\DeclareMathOperator{\im}{Im}
\DeclareMathOperator{\Area}{Area}
\DeclareMathOperator{\Ham}{Ham}
\DeclareMathOperator{\Hameo}{Hameo}

\DeclareMathOperator{\node}{node}
\DeclareMathOperator{\Aut}{Aut}
\DeclareMathOperator{\osc}{osc}

\DeclareMathOperator{\FQM}{FQM}

\newcommand{\lrang}[1]{
    \left\langle #1 \right\rangle
}

\newcommand{\pd}[2]{
    \frac{\partial #1}{\partial #2}
}

\renewcommand{\S}{{S}}

\newcommand{\id}{{\bf{1}}}

\newcommand{\SSplit}[1]{
    \begin{equation*}
        \begin{split}
            #1
        \end{split}
    \end{equation*}
}
\newcommand{\stackL}[1]{
    \begin{array}{l}
        #1
    \end{array}
}

\DeclarePairedDelimiter{\absx}{\lvert}{\rvert}
\DeclarePairedDelimiter{\normx}{\lVert}{\rVert}
\newcommand{\norm}[1]{\normx*{#1}}
\newcommand{\abs}[1]{\absx*{#1}}

\renewcommand{\leq}{\leqslant}
\renewcommand{\geq}{\geqslant}

\definecolor{BDarkBlue}{HTML}{0653A1}
\definecolor{gray75}{gray}{0.75}

\hypersetup{
    colorlinks,
    citecolor=black,
    filecolor=black,
    linkcolor=black,
    urlcolor=blue,
}




\newcommand{\setlabel}[1]{\edef\@currentlabel{#1}\label}
\newcommand{\emphref}[1]{\emph{{\ref{#1}}}}



\newcommand{\om}{\omega}
\newcommand{\eps}{\epsilon}

\def\mrm#1{{\mathrm{#1}}}

\def\cl#1{{\mathcal{#1}}}

\renewcommand{\r}{\rho}

\newcommand{\til}[1]{\widetilde{#1}}
\newcommand{\wh}[1]{\widehat{#1}}
\newcommand{\ol}[1]{\overline{#1}}
\newcommand{\ar}[1]{\vec{#1}}

\newcommand{\sm}[1]{C^\infty(#1)}

\begin{document}

\title{A dichotomy for the Hofer growth of area preserving maps on the sphere via symmetrization}
\author{Lev Buhovsky, Ben Feuerstein, Leonid Polterovich, and Egor Shelukhin}
\date{Mar 12, 2025}


\maketitle

\begin{abstract}
We prove that autonomous Hamiltonian flows on the two-sphere $S^2$ exhibit the following dichotomy: the Hofer norm either grows linearly in time or is bounded in time by a universal constant $C = 19 \mrm{Area}(S^2).$ Our approach involves a new technique, Hamiltonian symmetrization. Essentially, we prove that every autonomous Hamiltonian diffeomorphism is conjugate to an element $C$-close in the Hofer metric to one generated by a function of the height.
\end{abstract}

\tableofcontents

\section{Introduction and main results} 

Let $(M,\omega)$ be a closed connected symplectic manifold, i.e., an even dimensional manifold $M$ equipped with a closed non-degenerate 2-form $\omega$.
Every smooth function $F : M \times \R \to \R$ defines a time-dependent vector field $X^t_F$ which
solves the linear equation $i_{X^t_F} \om = -dF_t$ for $F_t: M \to \R$ given by $F_t(x) = F(x,t).$ This field defines a flow of diffeomorphisms $f_t : M \to M$. We say that {\it the Hamiltonian function}  $F$ generates the {\it Hamiltonian diffeomorphism} $\phi=f_1$. Hamiltonian diffeomorphisms form a group denoted by $\Ham (M,\omega)$. The interest in Hamiltonian diffeomorphisms is due to the fact that they model motions of classical mechanics on the phase space $M$, and that they preserve the symplectic form $\omega$. In fact, if $H^1(M,\R)=0$ (e.g. for $M=S^2$), $\Ham(M,\omega)$ is simply the identity component of the symplectomorphism group of $(M,\omega)$.

A remarkable discovery of modern symplectic topology
is that $\Ham(M,\omega)$ carries a bi-invariant Finsler metric with non-degenerate distance function $d(\phi,\psi)$. It is called {\it the Hofer metric} and is defined by $d({\bf 1} ,\phi) = \inf \int_0^1 ||F_t|| dt$, where the infimum is taken  over all Hamiltonian functions $F,$ with zero mean $\int F_t \om^n$ for all $t,$ generating $\phi.$ Here $||H|| = \max_{x \in M} |H(x)|$ is the uniform norm of a function on $M$. This metric was introduced by Hofer \cite{H}, who proved its non-degeneracy for the linear symplectic space (see also \cite{V}). This was extended in \cite{P-ETDS} to symplectic manifolds whose symplectic form represents a rational cohomology class, and finally
the non-degeneracy of $d$ was established in \cite{LM} for general symplectic manifolds.
We refer to \cite{P-book} for more details on Hofer's metric.

Given a one-parameter subgroup $\{f_t\}$ of $\Ham(M,\omega)$ generated by a {\it time-independent}
Hamiltonian $F : M \to \R$, we note that the function $t \mapsto d({\bf 1} , f_t)$ is subadditive and
hence the following limit exists:
\begin{equation}
\r(F):= \lim_{t \to +\infty} \frac{d({\bf 1} , f_t)}{t}\;.
\end{equation}
If $\r(F) >0$, we say that $\{f_t\}$ has {\it linear growth}. Linear growth takes place for $C^\infty$-generic functions $F$, at least on symplectically aspherical symplectic manifolds $(M,\om),$ see \cite[Chapter 6.3]{PR-book}. We will also write $\rho(\phi^1_F)=\rho(F)$ to emphasize the dependence on the Hamiltonian diffeomorphism only.

Interestingly enough, in all examples known to us, one-parameter subgroups enjoy the following dichotomy: either $\r(F)>0$ so the growth is linear, or $ d({\bf 1} , f_t)$ is a bounded function of $t$ and $\r(F)=0$. The latter option takes place, for instance, if
$\psi^*F=-F$ for some $\psi \in \Ham(M,\omega)$; such Hamiltonian flows are of interest in
dynamical systems and have a special name - reversible systems. This has led to the following question, see \cite[Question 3.3]{P-groups}.

\begin{question} \label{quest-1} Do there exist one-parameter subgroups of intermediate growth?
\end{question}

Our main result proves the following {\it enhanced dichotomy} for $M=S^2$,
and hence provides a negative answer to Question \ref{quest-1}.

\begin{thm}[Enhanced dichotomy]\label{thm-1} Consider the two-dimensional sphere $S^2$
equipped with a smooth area form. Then for every $F \in C^\infty(M)$
with $\r(F)=0$ and all $t$ in $\R$ we have
\begin{equation}\label{eq-main-1}
d({\bf 1} , f_t) \leq 19 \Area(S^2)\;.
\end{equation}
\end{thm}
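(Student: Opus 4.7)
The plan is to reduce Theorem \ref{thm-1} to two independent ingredients: a \emph{Hamiltonian Symmetrization Theorem} — the new tool advertised in the abstract — and a direct Hofer bound for \emph{axial} flows, those generated by functions of a single height coordinate. Conjugation invariance of the Hofer metric combines the two estimates additively, and tight bookkeeping of the two individual constants will yield the factor $19$.

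First I would establish the Symmetrization Theorem in its flow form: there exists a universal constant $C_1$ such that for every autonomous $H \in \sm{S^2}$ one can find $g \in \Ham(S^2)$ and a function $u \in \sm{[-1,1]}$ satisfying
\begin{equation*}
d\bigl(g\, f_t^H\, g^{-1},\; f_t^{u \circ h}\bigr) \;\leq\; C_1 \, \mrm{Area}(S^2), \qquad \forall\, t \in \R,
\end{equation*}
where $h : S^2 \to [-1,1]$ is a fixed height coordinate. The construction proceeds from the Reeb tree of $H$ by a finite sequence of elementary conjugations that "straighten" the tree into an interval; each step has Hofer cost controlled by the area of an affected level region, and they assemble into a single $g$ that works for every $t$ because $f_t^H$ preserves the Reeb data of $H$ and therefore commutes with the combinatorial part of the construction. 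Making this commutation precise and controlling the accumulated Hofer cost uniformly in $t$ is the principal technical obstacle, and is the heart of the paper.

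Second, I would prove an axial bound: if $v \in \sm{[-1,1]}$ and $\rho(v \circ h) = 0$, then $d({\bf 1}, f_t^{v \circ h}) \leq C_2 \, \mrm{Area}(S^2)$ for every $t \in \R$. The key observation is that $f_t^{v \circ h}$ rotates the parallel circle $\{h = c\}$ by the angle $t v'(c)$, so it is also generated by any autonomous $w \circ h$ with $w'(c) \equiv t v'(c) \pmod{2\pi}$; minimizing $\osc(w)$ over these reductions gives the upper bound $\inf_{n \in \Z} \osc\bigl(t v(z) + 2\pi n z\bigr)/2$. This infimum grows linearly in $t$ unless $v$ is affine in $z$, and a matching lower bound via spectral invariants identifies the rate with $\rho(v \circ h)$. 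Hence $\rho(v \circ h) = 0$ forces $v(z) = \alpha z + c$, in which case $f_t^{v \circ h}$ lies in the rotation subgroup, whose Hofer diameter is a universal multiple of $\mrm{Area}(S^2)$.

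Assembling the two steps via conjugation invariance and the triangle inequality,
\begin{equation*}
d({\bf 1}, f_t^H) \;=\; d({\bf 1},\, g\, f_t^H\, g^{-1}) \;\leq\; d({\bf 1},\, f_t^{u \circ h}) + C_1\, \mrm{Area}(S^2);
\end{equation*}
the hypothesis $\rho(H) = 0$ transfers to $\rho(u \circ h) = 0$ since the two growth rates differ by a term of order $1/t$, so the axial bound controls the first term by $C_2\, \mrm{Area}(S^2)$ and yields \eqref{eq-main-1} with constant $C_1 + C_2$. Arranging $C_1 + C_2 \leq 19$ would be done by careful accounting inside each step. The main obstacle of the whole argument is Step 1: once the Symmetrization Theorem is available, Step 2 is a fairly direct piece of Hofer calculus on the rotation subgroup, and the final assembly is formal.
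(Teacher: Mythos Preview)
Your two–step decomposition is in the right spirit, but you have the architecture slightly wrong and Step 2 contains a genuine error. In the paper the symmetrization does \emph{not} land in arbitrary functions of the height: its target $\Sigma(H)$ is an \emph{even} function of $z$. This is the key design choice, because the dichotomy for even axial Hamiltonians is simply ``zero or linear growth'' (via the link quasi-morphisms $\mu_{k,B}$ of \cite{CHMSS}), so $\rho(H)=0$ forces $\Sigma(H)\equiv 0$ and the target flow is the identity. Your Step 2 then disappears entirely, and the whole constant $19$ is produced by Step 1 alone (the conjugation lemma gives $18$ for flattened quasi-Morse functions, and the passage to general $C^2$ functions costs an extra~$1$).

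Your Step 2 as written does not work. The claim that $\rho(v\circ h)=0$ forces $v$ affine is false: any \emph{odd} $v$, say $v(z)=z^3$, satisfies $R^*(v\circ h)=-(v\circ h)$ for the half-turn $R\in\Ham(S^2)$, hence generates a reversible flow with $\rho=0$ and $d(\id,f_t^{v\circ h})\leq 2\|R\|$. Yet for $v=z^3$ your winding-reduction upper bound $\inf_{n\in\Z}\osc(tv+2\pi n z)$ grows linearly in $t$, so there is no ``matching lower bound via spectral invariants'' of the kind you assert. The correct statement is $\rho(v\circ h)=0\iff v$ is odd, and the forward implication is exactly where the hard input enters: it is the Lagrangian-control property of the $\mu_{k,B}$ that detects a nonzero even part. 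You never invoke these quasi-morphisms, and without them (or something of comparable strength) neither your Step 2 nor the paper's implication $\rho(H)=0\Rightarrow\Sigma(H)\equiv 0$ can be proved. Finally, in Step 1 the phrase ``Hofer cost controlled by the area of an affected level region'' is not enough for a bound independent of the number of Reeb-graph edges; the paper obtains uniformity through a dyadic grouping of annuli combined with a refinement of Sikorav's trick (Lemmas \ref{lma: Sik} and \ref{lma: Sikflex}), which is what makes the costs telescope to a universal multiple of $\mrm{Area}(S^2)$.
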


A few remarks are in order. The fact that the upper bound in \eqref{eq-main-1}
does not depend on the choice of $F$ is unexpected to us. The constant $19$ is clearly not
optimal - it would be interesting to explore this issue. We expect that our method will yield
the (not necessarily enhanced) dichotomy for all surfaces, either closed or open. In the open case one deals with compactly supported Hamiltonians only. Let us mention that in the case of {\it open surfaces of infinite area} the dichotomy was established in \cite{PS} by different tools.

In order to describe our approach to the enhanced dichotomy, let's look at $\Ham(S^2)$ as
at an infinite-dimensional Lie group. Let us mention that this viewpoint highlights
the following {\it genuinely infinite-dimensional} feature: this group is simple by a result due to Banyaga, it carries a bi-invariant Finsler metric, and it is by no means compact.  For finite-dimensional Lie groups, this is impossible due to (a minor modification of) a result by Milnor, see \cite[Section 1.3.3]{PR-book} for a detailed discussion. Let $z: S^2 \to \R$
be the height function with $\max z = 1/2,\; \min z = -1/2$. Denote by $\mathcal{T} \subset \Ham(S^2)$ the group of diffeomorphisms generated by Hamiltonians of the form $u(z)$, and by $\mathcal{T}_{ev}$ the subgroup of $\mathcal{T}$ generated by {\it even} functions, i.e., $u(z)=u(-z)$. It is instructive (see \cite[Proposition (2.2)]{BFR}) to think of $\mathcal{T}$ as the maximal torus of $\Ham(S^2)$, and of $\mathcal{T}_{ev}$ as its subgroup of invariant elements with respect to the action of the Weyl group (as defined in \cite[p. 158]{BD}). (In our particular situation, the Weyl group is isomorphic to $\Z/2\Z.$)

Next, denote by $(\widehat{\Ham}(S^2), \widehat{d})$ the completion of $\Ham(S^2)$ with respect to Hofer's metric $d$, and write $\widehat{\mathcal{T}_{ev}}$ for the closure of $\mathcal{T}_{ev}$ in this completion. We shall prove the following result.

\begin{thm}\label{thm-2} $\;$
\begin{itemize}
\item[{(i)}] For every non-trivial one-parameter subgroup $\{f_t\}$ in $\Ham(S^2)$ there exists unique one-parameter subgroup $\{g_t\}$ in $\widehat{\mathcal{T}_{ev}}$
and a family of elements $\{\psi_t\}$ in ${\Ham}(S^2)$ such that
\begin{equation}\label{eq-thm2}
\widehat{d}( \psi_t f_t \psi_t^{-1}, g_t) \leq 19\cdot \Area(S^2).
\end{equation} 
\item[{(ii)}] The subgroup $\{g_t\}$ is either trivial (i.e., $g_t = {\bf 1}$ for all $t$)
or it has linear growth:
$$\lim_{t \to +\infty} \frac{\widehat{d}({\bf 1} , g_t)}{t} > 0\;.$$
\end{itemize}
\end{thm}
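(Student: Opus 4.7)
The plan is to deduce both parts of the theorem from the Hamiltonian symmetrization construction promised in the abstract, combined with standard Hofer-norm lower bounds for autonomous flows generated by functions of the height on $S^2$. I would prove part (ii) first, since it is intrinsic to the abelian subgroup $\widehat{\mathcal{T}_{ev}}$, and then invoke it in the uniqueness argument for part (i).

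For part (ii), the key observation is that $\mathcal{T}_{ev}$ is abelian: its generating Hamiltonians $u(z)$ pairwise Poisson-commute, being all functions of the single variable $z$. Hence $\widehat{\mathcal{T}_{ev}}$ is abelian, and any non-trivial one-parameter subgroup $\{g_t\}$ in it arises from a Hofer-Cauchy family of flows of non-constant even Hamiltonians $u_n(z)$. Classical Hofer-norm lower bounds for autonomous flows generated by height Hamiltonians on $S^2$---derivable via spectral invariants, the Entov--Polterovich quasimorphism, or direct area-displacement arguments---yield a linear estimate of the form $d({\bf 1},\phi^t_{u_n})\geq \alpha(u_n)\,|t|$, with $\alpha(u_n)$ vanishing only when $u_n$ is trivial. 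Non-triviality of $\{g_t\}$ prevents $\alpha(u_n)\to 0$, and passing to the limit yields $\lim_{t\to +\infty} \widehat d({\bf 1},g_t)/t > 0$.

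For part (i), given the autonomous $F$ generating $\{f_t\}$, the symmetrization would produce a canonical even function $u\colon \R\to\R$ (roughly a symmetric rearrangement of $F$, averaged over the Weyl involution $z\mapsto -z$) together with a family $\psi_t\in\Ham(S^2)$ such that
\[
\widehat d\bigl(\psi_t f_t\psi_t^{-1},\ \phi^t_{u(z)}\bigr)\leq 19\,\text{Area}(S^2)\qquad\text{for all } t\in\R.
\]
Setting $g_t:=\phi^t_{u(z)}$ gives the required one-parameter subgroup in $\widehat{\mathcal{T}_{ev}}$, and \eqref{eq-thm2} holds. For uniqueness, if $(\psi'_t,g'_t)$ were another valid pair, then in the abelian group $\widehat{\mathcal{T}_{ev}}$ the difference $h_t:=g'_t g_t^{-1}$ is again a one-parameter subgroup; a combination of the two bounds using bi-invariance of $\widehat d$ together with the canonical (equimeasurable) nature of the symmetrization forces $\widehat d({\bf 1},h_t)$ to be bounded uniformly in $t$, whereupon part (ii) gives $h_t\equiv {\bf 1}$.

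The main obstacle is the symmetrization itself: one must construct, uniformly in the parameter $t$, an even Hamiltonian $u(z)$ and a time-dependent conjugating family $\psi_t$ whose Hofer bound is \emph{independent} of $t$. Guaranteeing simultaneously that $u$ is even (respecting the $\Z/2\Z$ Weyl symmetry $z\mapsto -z$), that the symmetrization of the one-parameter subgroup $\{f_t\}$ again yields a one-parameter subgroup in $\widehat{\mathcal{T}_{ev}}$, and that the same universal constant $19\,\text{Area}(S^2)$ governs every time slice, is the central technical content of the paper and where I expect the real work to lie.
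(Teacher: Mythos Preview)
Your outline matches the paper's architecture: existence in (i) comes from the symmetrization $\Sigma(F)$ together with a conjugation bound, and (ii) is a lower-bound statement for even height Hamiltonians. Two of your steps, however, have genuine gaps.

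For part (ii), the ``classical'' tools you list do not suffice. The Entov--Polterovich quasimorphism applied to a height Hamiltonian $u(z)$ returns $u(0)$, and there are many non-trivial even mean-zero $u$ with $u(0)=0$ (e.g.\ $u(z)=z^4-\tfrac{3}{20}z^2$). Area-displacement arguments are likewise symmetric-blind. The paper detects all non-trivial even $u$ via the \emph{link} quasimorphisms $\mu_{k,B}$ of Cristofaro-Gardiner--Humili\`ere--Mak--Seyfaddini--Smith: the Lagrangian control property gives $\mu_{k,B}(\phi^t_u)=\tfrac{t}{k}\sum_{j}u(-\tfrac12+B+jC)$, and the explicit inversion formula $u(z)=\tfrac{k}{2}\mu_{k,B}-\tfrac{k-2}{2}\mu_{k-2,B+C}$ shows that $u\not\equiv 0$ forces some $\mu_{k,B}\neq 0$, hence $\rho>0$ by Hofer-Lipschitz. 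You must use this family (or something of comparable strength); a single Calabi-type quasimorphism cannot do the job.

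For uniqueness in (i), the step ``bi-invariance of $\widehat d$ forces $\widehat d(\mathbf 1,h_t)$ bounded'' fails: from the two bounds you only get
\[
\widehat d(g_t,g'_t)\leq 38\,\mathrm{Area}(S^2)+\widehat d\bigl(\psi_t f_t\psi_t^{-1},\,\psi'_t f_t(\psi'_t)^{-1}\bigr),
\]
and the last term, a commutator $[\psi_t^{-1}\psi'_t,f_t]$ norm, has no a priori bound when $\|f_t\|\to\infty$ and $\psi_t,\psi'_t$ are unrelated. The paper's (implicit) uniqueness argument bypasses this entirely: since each $\mu_{k,B}$ is \emph{conjugation-invariant} and Hofer-Lipschitz, $|\mu_{k,B}(g_t)-\mu_{k,B}(f_t)|$ and $|\mu_{k,B}(g'_t)-\mu_{k,B}(f_t)|$ are bounded independently of $t$; homogeneity then gives $\mu_{k,B}(g_1)=\mu_{k,B}(g'_1)$ for all $k,B$, and the inversion formula forces $g_t=g'_t$. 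In other words, uniqueness is not a soft consequence of (ii) plus bi-invariance---it requires the same quasimorphism machinery that underlies the definition of $\Sigma$.
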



Recall that in a compact simple finite-dimensional Lie group every element is conjugate
to an element from a given maximal torus, see \cite[p. 159]{BD}. Theorem \ref{thm-2}(i)
can be considered to be a substitute of this result for autonomous diffeomorphisms in the group $\Ham(S^2)$: the conjugation
exists up to a bounded error and passing to the completion. Its proof is based on a
quite substantial generalization of the Sikorav's trick, see \cite{Sik, Buh}.
The proof of Theorem \ref{thm-2}(ii) involves the technique of Lagrangian estimators coming
from Lagrangian Floer theory in symmetric products \cite{PSh, CHMSS}, and in particular
on quasi-morphisms on $\widehat{\Ham}(S^2)$ constructed in \cite{CHMSS}.  
The enhanced dichotomy (Theorem \ref{thm-1}) readily follows from Theorem \ref{thm-2}.

\begin{rem}
The metric group $(\widehat{\Ham}(S^2), \widehat{d})$ plays an essential role in our approach to Question \ref{quest-1} and in our arguments. Therefore we see an importance in studying metric/group theoretic properties of $ \widehat{\Ham}(S^2) $. One such property is discussed in Remark \ref{remark:hat-Ham-non-simple} below.
\end{rem} 

\medskip\noindent
{\sc Organization of the paper:} In Section \ref{sec: symm} we define the key notion of the paper, Hamiltonian symmetrization, prove a few of its basic properties, and discuss three points of view on it: symmetrization of suitably adapted, so-called ``flattened quasi-Morse", Hamiltonian functions (Section \ref{subsec: symm fqm}), symmetrization by means of quasi-morphisms which applies to arbitrary continuous function (Section \ref{subsec: symm qm}), and a combinatorial description in terms of the Reeb graph (Section \ref{subsec: combin}). In Section \ref{sec: proof thm} we prove Theorem \ref{thm-2} by means of establishing a number of more refined properties of the symmetrization which we summarize in Section \ref{subsec: state thm}: uniform continuity as $z \to \pm 1/2$ for sufficiently smooth Hamiltonians (Section \ref{subsec: uniform}), a H\"older style inequality for comparing symmetrizations of nearby functions (Section \ref{subsec: Holder}), an estimate on the effect of flattening a Morse function on the symmetrization (Section \ref{subsec: flatten}), reduction of the theorem to the case of flattened quasi-Morse functions (Section \ref{subsec: thm mod fqm}), and finally its proof for these functions (Section \ref{subsec: thm fqm}).


\section*{Acknowledgments} LB was partially supported by the Israel Science Foundation grant 1918/23. BF and LP were partially supported by the ISF-NSFC grant 3231/23. ES was partially supported by the NSERC, Fondation Courtois, and Alfred P. Sloan Foundation.

\section{The symmetrization map}\label{sec: symm}

\noindent Our ultimate goal in this section is to define a map \[\Sigma:C_0^0(\S^2)\rightarrow C_{\text{even},0}^0\left(I\right)\] for the interval $I =\left(-\frac{1}{2},\frac{1}{2}\right),$ which we call the \textbf{ symmetrization map,} which enjoys various useful properties. Here $C_0^0(\S^2)$ is the set of mean-zero $C^0(\S^2)$ functions, and $C_{\text{even},0}^0\left(I\right)$ is the set of continuous mean-zero even functions on $I.$

For instance, we prove the following \emph{enhanced dichotomy} property, which shows how the symmetrization of an autonomous Hamiltonian $H \in C^2_0(S^2)$ is related to the Hofer growth of the one-parametric group $\{ \phi^t_H \}_{t \in \R}$: \[ \r(H) =0 \iff \Sigma(H)\equiv 0 \iff \forall t \quad \widehat{d}(\id,\varphi_H^t)< 19 \Area(\S^2).\]

Unfortunately, $\Sigma$ has the disadvantage of having a somewhat involved definition, and the rest of this section is dedicated to it. In order to be as concise as possible, the definition of $\Sigma$ is split into two parts, and followed by a quick discussion. In the first part, we define ``(flat) quasi-Morse" functions to be functions of Morse functions of the sphere (constant near critical levels), and give a rather intuitive definition of $\Sigma$ on these functions. In the second part we use a density argument to define the symmetrization for $C_0^0(\S^2)$ functions. This requires rather heavy machinery, notably quasi-morphisms and quasi-states from \cite{CHMSS} (in the spirit of \cite{PSh}). A running motif in these parts is that $\Sigma$ can be defined for functions of trees, and extended to functions on $\S^2$ using the theory of Reeb Graphs (see \cite{izosimov2016coadjoint}). This idea is expanded upon in the last subsection.

\subsection{Symmetrization of flat quasi-Morse functions}\label{subsec: symm fqm}
We will start with some auxiliary definitions and notations.
\begin{defin}
    A function $H \in C^\infty(\S^{2})$ is called {\bf quasi-Morse} (QM) if it is the pullback of a function on the Reeb graph of a Morse function: that is, if there exists a Morse function $f: M \rightarrow \R$ and a function \[ \til{H} : \Gamma_f \rightarrow \R \] such that \[H(q) = \til{H}(\pi(q))\] for all $q \in \S^2,$ where $\Gamma_f$ is the Reeb graph of $f$ and $\pi: M \rightarrow \Gamma_f$ is the natural projection. Furthermore, it is called {\bf flat quasi-Morse} (FQM) if for every vertex $v$ of $\Gamma_f$ of degree at least $2$ there is a neighborhood of $\pi^{-1}(v)$ on which $H$ is constant.
\end{defin}

\noindent
We will use the following terminology:
\begin{itemize}
\item We will say that the FQM function $H$ is subordinate to the Morse function $f.$ Note that such a function $f$ is not unique. 
    \item 
    Denote by $\FQM_0(M)\subset C^{\infty}(M)$ the set of mean-zero FQM functions on $M$ subordinate to a Morse function with distinct critical values.
    \item
    For $H$ Morse, and $\varepsilon>0$, let $r: \R \rightarrow \R$ be a smooth function which is constant near the critical values of $H$ corresponding to saddles which satisfies $\norm{r-\id_{\R}}_{C^0}<\varepsilon.$ We call the FQM function $\widetilde{H} = r\circ H$ the {\bf $\varepsilon$-flattening of $H$}.\\
\end{itemize} 
\begin{thm}\label{thm: fqm0 symm}
        There exists a map $\widetilde{\Sigma}: \FQM_0 (\S^2)\rightarrow C_{\text{even},0}^{\infty}\left(I \right)$, where $C_{\text{even},0}^{\infty}\left(I \right)$ are smooth mean-zero even functions of $I=\left(-\frac{1}{2},\frac{1}{2}\right)$ with the following properties:
        \begin{enumerate}
            \item \emph{(Homogeneity and symmetry)} \setlabel{Homogeneity and symmetry}{prop:AlgebraicPropertiesFQM}For all $H\in \FQM_0(\S^2)$ and $t \in \R$ we have
            \[
                \widetilde{\Sigma}(tH) = t\widetilde{\Sigma}(H),
            \]
            and if $H(z)$ is a function of the height, then
            \[
                \widetilde{\Sigma}(H) =  \frac{H(z) + H(-z)}{2}.
            \]
            \item \emph{(FQM proximality)} \setlabel{FQM proximality}{prop:HoferControlFQM} For every $H\in \FQM_0(\S^2)$ there exists a constant $C(H) \geq 0$ such that for all $t\in \R,$
            \[d\left(\varphi_H^t, \varphi_{\widetilde{\Sigma}(H)}^t\right) \leq C(H).\] In particular $\rho(H) = \rho(\til{\Sigma}(H)).$
        \end{enumerate}
\end{thm}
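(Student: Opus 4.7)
The plan is to construct $\widetilde{\Sigma}(H)$ in two conceptual steps: (i) use the Reeb tree of a subordinate Morse function to rearrange $H$ into a function of the height, and (ii) symmetrize by averaging with the reflection $z \mapsto -z$. Concretely, given $H \in \FQM_0(\S^2)$ subordinate to a Morse function $f$ with distinct critical values, let $\pi : \S^2 \to \Gamma_f$ be the Reeb projection and equip $\Gamma_f$ with the pushforward of the area form. Let $\til{H} : \Gamma_f \to \R$ be the descent of $H$; by the FQM hypothesis, $\til H$ is locally constant at every vertex of $\Gamma_f$ of degree $\geq 2$. I would then fix a (suitably normalized) measure-preserving, piecewise-monotone ``unfolding'' $U : \Gamma_f \to [-1/2,1/2]$ via a depth-first traversal of $\Gamma_f$ with some convention at branchings, and set $H_{\mrm{flat}} := \til{H} \circ U^{-1}$. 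This is genuinely $C^\infty$ on $I$ because $H_{\mrm{flat}}$ is locally constant across the images of the degree-$\geq 2$ vertices. Define
\[
    \widetilde{\Sigma}(H)(z) := \tfrac{1}{2}\bigl( H_{\mrm{flat}}(z) + H_{\mrm{flat}}(-z) \bigr).
\]
This is smooth, even, and mean-zero. Homogeneity is manifest in the construction, and if $H = H(z)$ is subordinate to $f = z$, then $\Gamma_z = I$, the unfolding $U$ is the identity, and the formula reduces to $(H(z) + H(-z))/2$, establishing~\emphref{prop:AlgebraicPropertiesFQM}.

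For~\emphref{prop:HoferControlFQM}, the plan is to combine a generalization of Sikorav's trick with a reversibility argument. The passage from $H$ to the height-dependent Hamiltonian $H_{\mrm{flat}} \circ z$ can be realized as conjugation by a single $\Psi \in \Ham(\S^2)$ that carries each annular band $\pi^{-1}(e)$, for $e$ an edge of $\Gamma_f$, to the corresponding height-band $z^{-1}(U(e))$. Decomposing $\Psi$ into moves each supported in a displaceable subset of $\S^2$ (of area $< \Area(\S^2)/2$), a generalized Sikorav trick bounds $d(\id, \Psi) \leq C_1(H)$ in terms of $\diam H$ and the combinatorial size of $\Gamma_f$; by bi-invariance of $d$,
\[
    d\bigl(\varphi_H^t,\; \varphi_{H_{\mrm{flat}} \circ z}^t\bigr) \leq 2\, C_1(H).
\]
Next, write $H_{\mrm{flat}}(z) = \widetilde{\Sigma}(H)(z) + R(z)$, where $R(z) := \tfrac{1}{2}(H_{\mrm{flat}}(z) - H_{\mrm{flat}}(-z))$ is odd in $z$. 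Since both summands depend only on $z$ they commute, so $\varphi_{H_{\mrm{flat}} \circ z}^t = \varphi_R^t \, \varphi_{\widetilde{\Sigma}(H)}^t$, and it suffices to bound $d(\id, \varphi_R^t)$ uniformly in $t$. Taking $\psi \in \Ham(\S^2)$ to be the rotation by $\pi$ about a horizontal axis yields $\psi^* R = -R$, whence $\psi\, \varphi_R^t\, \psi^{-1} = \varphi_R^{-t}$; bi-invariance then gives $d(\id, \varphi_R^{2t}) \leq 2\, d(\id, \psi) =: 2 C_2$, uniformly in $t$. Setting $C(H) := 2 C_1(H) + 2 C_2$ completes the verification.

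The principal obstacle will be the generalized Sikorav step: explicitly decomposing the tree-to-interval rearrangement $\Psi$ as a composition of Hamiltonian isotopies supported in displaceable bands, each contributing a bounded Hofer cost. Because the number of such moves is controlled by the combinatorial complexity of $\Gamma_f$, and each contributes a term of order $\diam H \cdot \Area(\S^2)$, the resulting $C_1(H)$ depends on $H$ only through $\diam H$ and the number of critical points of $f$, consistent with the theorem's allowance that $C(H)$ depend on $H$. A secondary (and mainly cosmetic for the present statement) subtlety is verifying that $\widetilde{\Sigma}(H)$ is independent of the chosen $f$ and the depth-first ordering $U$: this reduces to a local check at each degree-$\geq 3$ vertex and will be cleaner once the quasi-morphism characterization of Section~\ref{subsec: symm qm} is available.
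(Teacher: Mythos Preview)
Your proposal has a genuine gap at the unfolding step. A measure-preserving, piecewise-monotone $U:\Gamma_f\to I$ with $H_{\mrm{flat}}:=\til H\circ U^{-1}$ well-defined and continuous does not exist once $\Gamma_f$ has a vertex of degree $\geq 3$. If you lay the edges on disjoint consecutive subintervals (the only way to keep $U_*\mu$ equal to Lebesgue), a depth-first order can match values at the shared branch vertex the first time, but after exhausting a branch you must abut a \emph{leaf} value (e.g.\ $\til H(p_1)$) against the saddle value $\til H(s)$, and the FQM hypothesis gives flatness only near degree-$\geq 2$ vertices, not near leaves; so $H_{\mrm{flat}}$ is discontinuous there. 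The same obstruction kills the single conjugator $\Psi$. In the $Y$-graph, with $e_3,e_1,e_2$ sent to consecutive subintervals, the height circle $\{z=b\}$ separating the $e_1$- and $e_2$-bands would satisfy $\Psi^{-1}(\{z=b\})\subset\overline{\pi^{-1}(e_1)}\cap\overline{\pi^{-1}(e_2)}=\{s\}$, forcing a diffeomorphism to collapse a circle to a point. Equivalently: on each of the three non-constant regions the pushed-forward $z$-foliation would have to agree with the concentric $f$-level foliation, producing three singular leaves, while the $z$-foliation of $S^2$ has only two. So neither your construction of $\widetilde\Sigma(H)$ nor the Hofer bound can proceed via a global $\Psi$, and the appeal to Sikorav to bound $\|\Psi\|$ is moot.

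The paper sidesteps this by working edge-by-edge. It writes $H=\sum_i H_i$ where $H_i$ is the extension by constants of $H|_{S_i}$ for $S_i=\pi^{-1}(A_i)$; each $H_i$ has trivial Reeb structure and therefore \emph{does} admit a conjugator $\theta_i\in\Ham(S^2)$ making $H_i\circ\theta_i^{-1}$ a function of $z$. One then sets $\widetilde\Sigma(H)=\tfrac12\sum_i\bigl(H_i\circ\theta_i^{-1}+H_i\circ\theta_i^{-1}\circ R\bigr)$ (the height bands for different $i$ are allowed to overlap, which is harmless since one is summing) and obtains the Hofer bound by applying $\|[A,B]\|\leq 2\min(\|A\|,\|B\|)$ separately to $(\theta_i,\varphi_{H_i}^{t/2})$ and $(R^{-1}\theta_i,\varphi_{H_i}^{t/2})$, giving $C(H)=\sum_i(4\|\theta_i\|+2\|R\|)$. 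Your reversibility argument for the odd part is correct and is precisely the role of $R$ here; what is missing is this edge decomposition replacing the nonexistent global $\Psi$.
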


\noindent In the sequel we will identify functions $H$ on $I$ and functions $H \circ z$ on $S^2$ depending on the height only. Before proceeding with the proof of Theorem \ref{thm: fqm0 symm}, we note that \emphref{prop:HoferControlFQM} along with the fact that $\widetilde{\Sigma}(H) \not\equiv 0 $ implies $\r\left(\widetilde{\Sigma}(H)\right)>0$ (we will show this implication a bit later) proves the (not yet enhanced) dichotomy theorem for $\FQM_0(\S^2)$ functions.

{\it Construction of $\til{\Sigma}(H).$} Before proving Theorem \ref{thm: fqm0 symm}, we introduce the following preliminary construction. Let $H\in \FQM_0(\S^2)$ be subordinate to a Morse function $f.$ We will construct $\widetilde{\Sigma}(H)$ directly as follows. If $f$ has no saddles, then it is already symplectomorphic to a function of $z$ (this will, for instance, be evident from the arguments below) and hence so is $H.$ Denote by $F(z)$ the latter function. It remains to set $\widetilde{\Sigma}(H) = (F(z)+F(1/2-z))/2.$ Assume now that $f$ has a saddle point. 

Recall that $\Gamma_f$ denotes the Reeb graph of $f$ and  $\pi: \S^2 \rightarrow \Gamma_f$ is the natural projection. Let $\{A_i\}$ be the edges of $\Gamma_f$ where vertices of degree $3$ are excluded and vertices of degree $1$ are included (for example if an edge connects two vertices of degree $3,$ it is taken to be open). Set $S_i = \pi^{-1}(A_i) \subset \S^2.$ Then the $\{S_i\}$ are disks/annuli which are invariant under the flow of $H,$ and whose closures form a cover of $\S^2$ with non-empty intersections consisting of figure-eights lying on critical levels of $f.$ For every $i,$ let $H_i$ be the extension by constants of $H|_{S_i}$ and $\varphi_i^t:=\varphi_{H_i}^t \in {\Ham}(S^2).$ Note that as $H$ is an FQM, all the $H_i$ are smooth functions. Indeed, there exist disks or annuli $S'_i$ whose closures are contained in $S_i,$ such that $H$ is constant on the connected components of $S^2 \setminus \cup_i S'_i$ so the extension by constants of $H|_{S'_i}$ is smooth and coincides with that of $H|_{S_i}.$ Moreover, $\varphi_H^t = \prod_i \varphi^t_i$. We want to map $S'_i$ to a standard cap/annulus with a Hamiltonian diffeomorphism $\theta_i$ such that $H_i \circ  \theta_i^{-1}$ is a function of the height.  To construct $\theta_i$ we use \cite[Lemma 2.5]{Schlenk_2003}, which we recall here along with relevant definitions for the reader's convenience.

\begin{defin}
    A family $\mathcal{L}$ of loops in a simply connected domain $U \subset \R^2$ is called admissible if there is a diffeomorphism $\beta: D(U) \setminus \{0\} \rightarrow U \setminus \{p\}$, for some $p\in U$, where $D(U)$ is an open disk of area $\Area(U)$ around the origin, such that:
    \begin{enumerate}
        \item Concentric circles are mapped to elements of $\mathcal{L}$.
        \item In a neighborhood of the origin $\beta$ is a translation.
    \end{enumerate}
\end{defin}
\begin{lemma}\label{lma: felix}
Let $U$ and $V$ be bounded and simply connected domains in $\R^2$ of equal area and let $\mathcal{L}_U$ and $\mathcal{L}_U$ be admissible families of loops in $U$ and $V$, respectively. Then there is a symplectomorphism between $U$ and $V$ mapping loops to loops.
\end{lemma}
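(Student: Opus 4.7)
The plan is to combine the ``straightening'' diffeomorphisms supplied by the admissibility assumption with a Moser-type argument along the family of concentric circles. Since $\mathrm{Area}(U) = \mathrm{Area}(V)$, the model disks $D(U)$ and $D(V)$ coincide with a common open disk $D \subset \R^2$ of the same area; let $\beta_U : D \setminus \{0\} \to U \setminus \{p_U\}$ and $\beta_V : D \setminus \{0\} \to V \setminus \{p_V\}$ be the diffeomorphisms furnished by admissibility. I will construct the desired symplectomorphism in the form
\[ \Phi \;=\; \beta_V \circ \Psi \circ \beta_U^{-1}, \]
extended across the puncture by $p_U \mapsto p_V$, where $\Psi : D \to D$ is an area-preserving self-diffeomorphism that preserves the foliation by concentric circles about the origin. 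Preservation of concentric circles guarantees that admissible loops are sent to admissible loops.

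To build $\Psi$, pull back the standard area form on $\R^2$ to obtain two smooth area forms $\omega_U := \beta_U^*(dx \wedge dy)$ and $\omega_V := \beta_V^*(dx \wedge dy)$ on $D$, both of the same total mass and both agreeing with the standard form on a neighborhood of the origin by the translation condition. Write $\Psi(r, \theta) = (\psi(r), g_r(\theta))$ in polar coordinates. Choose $\psi$ as the unique smooth increasing bijection such that the $\omega_V$-measure of the subdisk of radius $\psi(r)$ equals the $\omega_U$-measure of the subdisk of radius $r$; this is well-defined because both cumulative mass functions are smooth, strictly increasing, and share the same range. Then for each $r$, let $g_r : S^1 \to S^1$ be the diffeomorphism obtained by integrating the first-order ODE that exactly rescales the angular density of $\omega_U$ on $\{|\zeta| = r\}$ onto that of $\omega_V$ on $\{|\zeta| = \psi(r)\}$. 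The defining property of $\psi$ is precisely the integrability condition that makes this ODE close up to a diffeomorphism of $S^1$, and a direct pullback computation in polar coordinates confirms $\Psi^* \omega_V = \omega_U$.

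The main obstacle --- and the reason the translation condition is built into the definition of an admissible family --- is smoothness of $\Psi$ at the origin. Because $\omega_U$ and $\omega_V$ both coincide with the standard area form near $0$, one has $\psi(r) = r$ there and can take $g_r = \mathrm{id}$, so $\Psi$ equals the identity on a neighborhood of the origin. Consequently $\Phi$ coincides with $\beta_V \circ \beta_U^{-1}$ near $p_U$, extends smoothly by $p_U \mapsto p_V$ to a diffeomorphism $U \to V$, and carries $\beta_U(\{|\zeta| = r\}) \in \mathcal{L}_U$ to $\beta_V(\{|\zeta| = \psi(r)\}) \in \mathcal{L}_V$, completing the proof.
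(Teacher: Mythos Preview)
Your proof is correct and follows the standard route for this type of result. Note, however, that the paper does not give its own proof of this lemma: it is quoted verbatim from \cite[Lemma 2.5]{Schlenk_2003} purely as a tool for constructing the diffeomorphisms $\theta_i$, so there is no in-paper argument to compare against. Your reduction via $\beta_U$ and $\beta_V$ to a circle-preserving self-map of the model disk, followed by the radial area-matching choice of $\psi$ and the angular ODE for $g_r$, is essentially how Schlenk proves it; the translation hypothesis is used exactly as you say, to make $\Psi$ equal to the identity near the origin and thereby guarantee a smooth extension across the puncture.

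One small point worth making explicit in your write-up: you should fix a consistent initial condition for the angular ODE, e.g.\ $g_r(0)=0$ for all $r$, and then observe that smoothness of $(r,\theta)\mapsto g_r(\theta)$ on $r>0$ follows from smooth dependence of ODE solutions on parameters, since the right-hand side is smooth in $(r,\theta,g)$. With that initial condition your claim that $g_r=\mathrm{id}$ near the origin is immediate, and the rest of the argument goes through as written.
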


Our admissible family of loops will be the regular level sets of $H_i$ extended to a family of smooth loops on a neighborhood of the union of these regular level sets. We will map this family of loops to concentric circles $\{z=c\}$ in $W_i =\{ c_i \leq z \leq d_i\}\subset \S^2.$ Here $c_i<d_i$ are given as follows. If $S'_i$ is an annulus, consider the areas $c'_i, d'_i$ of the two connected components of the complement $S^2 \setminus S'_i$ in arbitrary order. Then $c_i = c'_i, d_i = 1- d'_i.$ Note that $c_i < d_i$ as $c'_i+d'_i < 1.$ If $S'_i$ is a disk, we proceed analogously. By Lemma \ref{lma: felix}, we construct an area-preserving diffeomorphism $\theta'_i: S'_i \to W_i$, which by \cite[Lemmas 2 and 4]{EPP} extends to a Hamiltonian diffeomorphism $\theta_i: \S^2 \to \S^2.$ 

For a continuous function $F \in C^0(S^2)$ set $\langle F \rangle = \frac{1}{\Area(S^2)} \int_{S^2} F \om$ for its mean. Now, we are ready to define $\widetilde{\Sigma}(H)$. Consider $R \in \Ham(\S^2)$ given via the embedding $\S^{2} \subset \R^3$ as the standard sphere of radius $1/2,$ by $(x,y,z) \mapsto (-x,y,-z)$.
\begin{defin}\label{def: fqm0 symm}
    Let $H \in \FQM_0(\S^2)$ and let $\{\theta_i\}\subset \Ham(\S^2)$ be defined as above.
    The {\bf symmetrization of $H$} is defined as \[  \widetilde{\Sigma}(H) = \widehat{\Sigma}(H) - \langle \widehat{\Sigma}(H) \rangle,\]
    \[
       \widehat{\Sigma}(H) = \frac{1}{2}\sum_i \left( H_i\circ \theta_i^{-1} + H_i \circ \theta_i^{-1} \circ R \right),
    \]

    It is an even function of the height, which corresponds to a function $\widetilde{\Sigma}(H):  \left[-\frac{1}{2},\frac{1}{2}\right] \rightarrow \R$.
\end{defin}

\begin{center}
    \begin{figure}[H]
        \begin{subfigure}[h]{0.45\linewidth}
        \scalebox{0.85}{\tdplotsetmaincoords{80}{110}
        \begin{tikzpicture}[scale=3,tdplot_main_coords,
             declare function={f(\s,\t)=0.3*cos(3*\s)+1*\t;}]

            \tdplotsetrotatedcoords{20}{80}{0}
                \pgfmathsetmacro{\ra}{2}
                \draw [ball color=white,very thin,tdplot_rotated_coords] (0,0,0) circle (1) ;
                \node at (0,1.3,0.55) {\Large $\varphi^t_{H_i}$};
                \node at (0,-1.25,-0.1) {\Large $S_i$};
                

                \foreach \a in {0,0.01,...,0.07}{
                    \draw [domain=-75:115, color =red!35!white,samples = 60, line width=1pt] plot ({(1-(f(\x,\a))^2)^(0.5)*cos(\x)}, {(1-(f(\x,\a))^2)^(0.5)*sin(\x)},{f(\x,\a)});
                };

                \foreach \a in {0.07,0.11,...,0.43}{
                    \draw [domain=-63:107, color =red!35!white,samples = 60, line width=4pt] plot ({(1-(f(\x,\a))^2)^(0.5)*cos(\x)}, {(1-(f(\x,\a))^2)^(0.5)*sin(\x)},{f(\x,\a)});
                };

                \foreach \a in {0.43,0.44,...,0.5}{
                    \draw [domain=-75:115, color =red!35!white,samples = 60, line width=1pt] plot ({(1-(f(\x,\a))^2)^(0.5)*cos(\x)}, {(1-(f(\x,\a))^2)^(0.5)*sin(\x)},{f(\x,\a)});
              };

                \draw [domain=-75:115, color =black,samples = 60] plot ({(1-(f(\x,0))^2)^(0.5)*cos(\x)}, {(1-(f(\x,0))^2)^(0.5)*sin(\x)},{f(\x,0)});
                \draw [domain=-75:115, color =black,samples = 60] plot ({(1-(f(\x,0.5))^2)^(0.5)*cos(\x)}, {(1-(f(\x,0.5))^2)^(0.5)*sin(\x)},{f(\x,0.5)});

                \draw [domain=-75:120,very thick,color =red!20!black,samples = 60] plot ({(1-(f(\x,0.25))^2)^(0.5)*cos(\x)}, {(1-(f(\x,0.25))^2)^(0.5)*sin(\x)},{f(\x,0.25)});

        \end{tikzpicture}}
        \end{subfigure}
        \hfill
        \begin{subfigure}[h]{0.45\linewidth}
        \scalebox{0.85}{\tdplotsetmaincoords{80}{110}
        \begin{tikzpicture}[scale=3,tdplot_main_coords,sphere segment/.style args={%
            phi from #1 to #2 and theta from #3 to #4 and radius #5}{insert path={%
             plot[variable=\x,smooth,domain=#2:#1] 
             (xyz spherical cs:radius=#5,longitude=\x,latitude=#3)
             -- plot[variable=\x,smooth,domain=#3:#4] 
             (xyz spherical cs:radius=#5,longitude=#1,latitude=\x)
             --plot[variable=\x,smooth,domain=#1:#2] 
             (xyz spherical cs:radius=#5,longitude=\x,latitude=#4)
             -- plot[variable=\x,smooth,domain=#4:#3] 
             (xyz spherical cs:radius=#5,longitude=#2,latitude=\x)}},
             declare function={f(\s,\t)=0.3*cos(4*\s)+1.7*\t;}]
            \tdplotsetrotatedcoords{20}{80}{0}
                \draw [ball color=white,very thin,tdplot_rotated_coords] (0,0,0) circle (1) ;
                \draw[thin,fill=red!40!white,fill opacity=0.6,
                sphere segment={phi from -20 to 160 and theta from 17.46 to 44.5  and radius 1}] ;

                \draw [domain=-75:115] plot ({0.953*cos(\x)}, {0.953*sin(\x)},0.3);
                \draw [dashed,domain=115:285] plot ({0.953*cos(\x)}, {0.953*sin(\x)},0.3);
                \draw [domain=-75:115] plot ({0.714*cos(\x)}, {0.714*sin(\x)},0.7);
                \draw [dashed,domain=115:285] plot ({0.714*cos(\x)}, {0.714*sin(\x)},0.7);

                \draw[thin,fill=red!40!white,fill opacity=0.6,
                sphere segment={phi from -20 to 160 and theta from -17.46 to -44.5  and radius 1}] ;
                \draw [domain=-75:115] plot ({0.953*cos(\x)}, {0.953*sin(\x)},-0.3);
                \draw [name path=A,dashed,domain=115:285] plot ({0.953*cos(\x)}, {0.953*sin(\x)},-0.3);
                \draw [domain=-75:115] plot ({0.714*cos(\x)}, {0.714*sin(\x)},-0.7);
                \draw [name path=B,dashed,domain=115:285] plot ({0.714*cos(\x)}, {0.714*sin(\x)},-0.7);

                \draw [very thick,color =red!20!black, domain=-71:111] plot ({0.866*cos(\x)}, {0.866*sin(\x)},-0.5);
                \draw [very thick,color =red!20!black, domain=-71:111] plot ({0.866*cos(\x)}, {0.866*sin(\x)},0.5);

                \node at (0,1.3,0.55) {\Large $\varphi^t_{H_i\theta_i^{-1}}$};
                \node at (0,1.3,-0.55) {\Large $\varphi^t_{H_i\theta_i^{-1}R}$};

                \node at (0,-1.2,0.45) {\Large $\theta_i(S_i)$};
                \node at (0,-1.35,-0.7) {\Large $R(\theta_i(S_i))$};

        \end{tikzpicture}}
        \end{subfigure}
    \end{figure}
\end{center}

This definition does not depend on the specific choices of $\theta_i.$

\begin{proof}[Proof of Theorem \ref{thm: fqm0 symm}]
Definition \ref{def: fqm0 symm} evidently satisfies the \emphref{prop:AlgebraicPropertiesFQM} property of Theorem \ref{thm: fqm0 symm}. In order to show the \emphref{prop:HoferControlFQM} property of the same theorem, consider the following.

Since all $H_i\circ \theta_i^{-1} , H_i \circ \theta_i^{-1} \circ R $ are functions of the height we can decompose $\varphi_{\widetilde{\Sigma}(H)}^t$ to the commuting flows
\[
    \varphi_{\widetilde{\Sigma}(H)}^t = \prod_i \varphi_{H_i \circ \theta_i^{-1}}^{\frac{t}{2}}\varphi_{H_i \circ \theta_i^{-1} \circ R}^{\frac{t}{2}}.
\]
Now, by inductively using the triangle inequality, together with the bi-invariance of Hofer's metric, one gets 
\[
d\left(\varphi_H^t,\varphi_{\widetilde{\Sigma}(H)}^t\right) = d \left(\prod_i \varphi_i^t,\prod_i \varphi_{H_i \theta_i^{-1}}^{\frac{t}{2}}\varphi_{H_i \theta_i^{-1} R}^{\frac{t}{2}}\right) \leq \sum_i d\left(\varphi_{H_i}^t, \varphi_{H_i \theta_i^{-1}}^{\frac{t}{2}}\varphi_{H_i \theta_i^{-1} R}^{\frac{t}{2}}\right).
\]
Set $\norm{\psi}= d(\id,\psi)$ and $\phi_i = \varphi_{H_i}^{t/2}$,
and using the well-known fact that $\norm{[A,B]}= \norm{ABA^{-1}B^{-1}} \leq 2\min\left\{\norm{A},\norm{B}\right\}$, we have
\SSplit{
    d\left(\varphi_{H_i}^t, \varphi_{H_i \theta_i^{-1}}^{\frac{t}{2}}\varphi_{H_i \theta_i^{-1} R}^{\frac{t}{2}}\right) &=
    d\left(\left(\varphi_{H_i}^{\frac{t}{2}}\right)^2, \theta_i\varphi_{H_i}^{\frac{t}{2}} \theta_i^{-1}R^{-1}\theta_i
    \varphi_{H_i}^{\frac{t}{2}} \theta_i^{-1}R
    \right)\\
    &=d\left(\phi_i^2, \theta_i\phi_i \theta_i^{-1}R^{-1}\theta_i\phi_i \theta_i^{-1}R 
    \right) \\ 
    &\leq d\left(\phi_i, \theta_i\phi_i \theta_i^{-1}\right) + d\left( \phi_i, R^{-1}\theta_i\phi_i \theta_i^{-1}R \right) \\
    &=\norm{\phi_i^{-1}\theta_i\phi_i \theta_i^{-1}}
    +\underbrace{\norm{R^{-1}\theta_i\phi_i \theta_i^{ -1}R\phi_i^{-1}}}_{\norm{\left[R^{-1}\theta_i,\phi_i\right]}}\\
    &\leq 2\norm{\theta_i} + 2\norm{R^{-1}\theta_i} \leq 4\norm{\theta_i} + 2\norm{R}.
}
Thus, for all $t \in \R$,
\[
\abs{d(\id, \varphi_{H}^t) - d\left(\id,\varphi_{\widetilde{\Sigma}(H)}^t\right)} \leq d\left(\varphi_{H}^t,\varphi_{\widetilde{\Sigma}(H)}^t\right)\leq\sum_i \left(4\norm{\theta_i} + 2\norm{R}\right) =: C(H).
\]
\end{proof}

\subsection{Symmetrization of continuous functions}\label{subsec: symm qm}
We now want to define $\Sigma$ as a map $\Sigma:C_0^0(\S^2)\rightarrow C_{\text{even},0}^0\left(I\right).$ Clearly the set $\FQM_0(\S^2)$ defined in Section \ref{subsec: symm fqm} is a $C^0$-dense subset of $C_0^0(\S^2).$ Thus a natural way to define $\Sigma$ would be via continuity. Indeed, set $I_k = \left[-\frac{1}{2}+ \frac{1}{k+1},\frac{1}{2}- \frac{1}{k+1}\right]$ for $k \geq 2$, and let $r_k : C_{\text{even}}^0\left(I\right) \rightarrow C_{\text{even}}^0\left(I_k\right)$ be the restriction map. We will prove the following result.
\begin{thm}\label{thm: Lip}
There exists a unique map $\Sigma:C_0^0(\S^2)\rightarrow C_{\text{even},0}^0\left(I\right)$ that coincides with $\widetilde{\Sigma}$ on $\FQM_0(\S^2)$, such that the map $r_k \circ \Sigma$ is $(k-1)$ Lipschitz.
\end{thm}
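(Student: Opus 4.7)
My plan is to prove uniqueness by density, and existence by extending $\widetilde{\Sigma}$ from $\FQM_0(\S^2)$ using a crucial Lipschitz estimate on each $I_k$. Uniqueness is immediate: if two candidate maps $\Sigma, \Sigma'$ satisfy the hypotheses, then each $r_k \circ \Sigma$ is continuous (being Lipschitz), so their agreement with $\widetilde{\Sigma}$ on the $C^0$-dense subset $\FQM_0(\S^2)$ forces $r_k \circ \Sigma = r_k \circ \Sigma'$ for every $k \geq 2$, and hence $\Sigma = \Sigma'$ on $I = \bigcup_k I_k$.

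For existence, the central technical ingredient is the following Lipschitz bound on FQM functions: for every $k \geq 2$ and all $H_1, H_2 \in \FQM_0(\S^2)$,
\[
\|r_k \widetilde{\Sigma}(H_1) - r_k \widetilde{\Sigma}(H_2)\|_{C^0(I_k)} \leq (k-1) \|H_1 - H_2\|_{C^0(\S^2)}.
\]
Granted this estimate, the extension proceeds routinely. For $H \in C_0^0(\S^2)$ and any sequence $H_n \in \FQM_0(\S^2)$ converging to $H$ in $C^0$, the restrictions $\widetilde{\Sigma}(H_n)|_{I_k}$ form a Cauchy sequence in $C^0(I_k)$, converging uniformly to a limit $g_k$ independent of the approximating sequence. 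The $g_k$ are even and compatible under the restriction maps, so they glue to a continuous even function $\Sigma(H) \in C^0(I)$, which is forced to be $(k-1)$-Lipschitz on each $I_k$ by passage to the limit. The mean-zero condition is inherited by a careful control of the tails $\int_{I \setminus I_k} \widetilde{\Sigma}(H_n)$, using the sup-norm bound on $\widetilde{\Sigma}(H_n)$ coming from its explicit FQM construction in Theorem \ref{thm: fqm0 symm}, and passing to $n \to \infty$ and then $k \to \infty$ in the appropriate order.

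The main obstacle is the Lipschitz estimate itself on $\FQM_0(\S^2)$, which is precisely what Section \ref{subsec: Holder} promises to establish. My approach would be via the quasi-morphism and Lagrangian spectral-invariant machinery of \cite{CHMSS, PSh}. The key idea is to represent $\widetilde{\Sigma}(H)(z)$, for $z \in I_k$, as a signed combination of at most $O(k)$ values of $C^0$-Lipschitz-$1$ quasi-states on $\widehat{\Ham}(\S^2)$, with the combinatorial structure dictated by the Reeb graph of the underlying Morse function (the viewpoint of Section \ref{subsec: combin}). The constraint that $z$ lies at distance at least $1/(k+1)$ from $\partial I$ restricts the sub-level-set areas relevant to $z$ to lie in a region of the Reeb graph whose complexity is bounded by $O(k)$, limiting the number of quasi-state terms needed. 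Applying the triangle inequality across these $O(k)$ terms and using the $1$-Lipschitz continuity of each quasi-state then yields the claimed constant $k-1$.
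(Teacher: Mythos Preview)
Your overall framework---uniqueness by density, existence by proving a $(k-1)$-Lipschitz bound for $r_k\circ\widetilde\Sigma$ on $\FQM_0(\S^2)$ and then extending---is exactly what the paper does. Two points, however, need correction.

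First, the reference to Section~\ref{subsec: Holder} is wrong: that section proves the \emph{H\"older} estimate for $\widehat d(\varphi^1_{\Sigma(H)},\varphi^1_{\Sigma(H')})$, which is a different (and later) statement. The Lipschitz bound you need is established right here, inside the proof of Theorem~\ref{thm: Lip}, and it is the content of the theorem, not an input to it.

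Second, your proposed mechanism for the Lipschitz bound---``the Reeb-graph complexity relevant to $z\in I_k$ is $O(k)$, hence $O(k)$ quasi-state terms''---is not how the argument works, and that heuristic does not obviously produce the sharp constant $k-1$. The paper's key step is an \emph{explicit two-term formula}. Using the Lagrangian-control property of the link quasi-morphisms $\mu_{k,B}$ from \cite{CHMSS}, one shows for $H\in\FQM_0(\S^2)$ that
\[
\mu_{k,B}(\varphi_H)=\frac{1}{k}\sum_{j=0}^{k-1}\widetilde\Sigma(H)\!\left(-\tfrac12+B+jC\right),\qquad C=\frac{1-2B}{k-1}.
\]
The point-configurations $L_{k,B}$ and $L_{k-2,B+C}$ differ only at the symmetric pair $\pm(-\tfrac12+B)$, so a telescoping identity isolates a single value:
\[
\widetilde\Sigma(H)(z)=\frac{k}{2}\,\mu_{k,B}(H)-\frac{k-2}{2}\,\mu_{k-2,B+C}(H),\qquad B=\tfrac12-z,\ |z|<\tfrac12-\tfrac{1}{k+1}.
\]
Since each $\mu_{\ell,B}$ is $1$-Lipschitz in $\|\cdot\|_{C^0}$, the triangle inequality gives the constant $\tfrac{k}{2}+\tfrac{k-2}{2}=k-1$ exactly. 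This is independent of the Reeb graph of $H$; the role of $k$ is solely to make the parameter $B=\tfrac12-z$ lie in the admissible range $(\tfrac{1}{k+1},\tfrac12)$. With this formula in hand your density extension goes through verbatim, and the formula itself extends to all of $C^0_0(\S^2)$, bypassing the need for a separate sup-norm/tail argument for the mean-zero condition.
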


The proof of this statement requires the {\bf link quasi-morphisms} $\mu_{k,B}: \Ham(\S^2) \to \R$ from \cite{CHMSS} (cf. invariants introduced in \cite{PSh}). We proceed as follows. Fix $k>1$ and $B \in \left(\frac{1}{k+1},\frac{1}{2}\right)$ and set $C=\frac{1 -2B}{k-1}$.
Let $H \in \FQM_0(\S^2).$ Recall the commutative-additivity of $\mu_{k,B}$ (see \cite{CHMSS}) and consider $\left\{\theta_i\right\} \subset \Ham(\S^2)$ from the definition of $\widetilde{\Sigma}.$ Since both $\left\{\varphi_{H_i}\right\}$ and $\left\{\theta_i\varphi_{H_i}\theta_i^{-1}\right\}$ are collections of commuting maps, and as $\widetilde{\Sigma}(H)$ is a map of the height, we have
\SSplit{
    \mu_{k,B}(\varphi_H)   &= \mu_{k,B}\left(\prod_i \varphi_{H_i}\right)= \sum_i \mu_{k,B}(\varphi_{H_i}) = \sum_i \mu_{k,B}(\theta_i\varphi_{H_i}\theta_i^{-1})\\ &= \mu_{k,B}\left(\varphi_{\widetilde{\Sigma}(H)}\right)= \frac{1}{k}\sum_{j=0}^{k-1} \widetilde{\Sigma}(H)\left(-\frac{1}{2}+B+jC\right).}
    
Consider the set $L_{\ell,B}:= \left\{-\frac{1}{2} + B + jC\right\}^{\ell-1}_{j=0} \subset I,$ where $1/(\ell+1) < B <1/2.$ Then $L_{k,B}$ and $L_{k-2,B+C}$ coincide on all but two points $\pm \left( -\frac{1}{2} + B \right).$ (Note that indeed $1/(k-1)<B+C<1/2.$) This gives us the following explicit expression for $\widetilde{\Sigma}:$
\begin{equation}\label{SymmExplicit}
    \widetilde{\Sigma}(H)(z) = \frac{k}{2}\mu_{k,B}(H) -    \frac{k-2}{2}\mu_{k-2,B+C}(H), \quad
    \stackL{
        \abs{z} < \frac{1}{2}-\frac{1}{k+1},\\
        B= \frac{1}{2}-z,\\
        C=\frac{1 -2B}{k-1}
    }.
\end{equation}

We can now use the Hofer-Lipschitz property of $\mu_{k,B}$ to deduce our Lipschitz condition. For all $H,H' \in \FQM_0(\S^2)$ 
\SSplit{
    \abs{\Sigma(H')(z) - \Sigma(H)(z)} &=
    \abs{\frac{k}{2}\mu_{k,B}(H') -    \frac{k-2}{2}\mu_{k-2,B+C}(H')-\frac{k}{2}\mu_{k,B}(H) +    \frac{k-2}{2}\mu_{k-2,B+C}(H)}\\
    &\leq
    \frac{k}{2}\abs{\mu_{k,B}(H') -\mu_{k,B}(H) }+
    \frac{k-2}{2}\abs{\mu_{k-2,B+C}(H')- \mu_{k-2,B+C}(H)}\\
    &\leq \left(\frac{k}{2} + \frac{k-2}{2} \right) \norm{H'-H}_{C^0} = (k-1)\norm{H'-H}_{C^0}.
}

This proves the Lipschitz-continuity condition on $\widetilde{\Sigma}$ and finishes the proof of the theorem. 

\begin{rem}\label{rem:deltas} It should be noted that the explicit formula (\ref{SymmExplicit}) directly extends to $\Sigma$, and inherits the independence of $k$ from $\widetilde{\Sigma}$. The latter can also be shown directly as follows. For $k, B$ as above, let $\delta_{k,B} = \frac{1}{k} \sum_{x \in L_{k,B}} \delta_x.$ If $\delta_i = \delta_{k_i, B_i}$ satisfy $\sum a_i \delta_i = 0$ for certain $a_i \in \R,$ then the Lagrangian control property yields that the quasi-morphism $\mu=\sum a_i \mu_{k_i, B_i}$ vanishes on $\cl T$ and $\widehat{\cl T}.$ This immediately implies the independence of the above formula for $\Sigma(H)(z)$ on $k$ as long as $|z|<1/2-1/(k+1).$ (We also note that the values $k=1$ and $B=1/2$ are admissible for this argument.)
\end{rem}

\medskip
\noindent The map $\Sigma:C_0^0(\S^2)\rightarrow C_{\text{even},0}^0\left(I\right)$ inherits the algebraic properties of $\widetilde{\Sigma},$ satisfies Lipschitz continuity, and a new property: independence of Hamiltonian, as described below. 

Recall that to every Hamiltonian $H$ in $C_0^0(\S^2)$ there corresponds a one-parameter subgroup $\phi_H = \{\phi^t_H\}$ in $\wh{\Ham}(\S^2).$ It is obtained by approximating $H$ by smooth $H_i$ and considering the limit $\phi^t_H$ in $\wh{\Ham}(M)$ of the Hofer Cauchy sequence $\phi^t_{H_i}.$ It can be checked that $\phi^t_H = \phi^1_{tH}.$ We define $\Aut^0(M)$ as the image of the map $C_0^0(\S^2) \to \wh{\Ham}(\S^2),$ $H \mapsto \phi^1_H.$  

\begin{rem}
A related subspace $\widehat{\Aut}(\S^2)$ of $\wh{\Ham}(\S^2)$ is defined as the closure inside $\wh{\Ham}(\S^2)$ of $\Aut(\S^2).$ In other words, it consists of the limits in $\wh{\Ham}(\S^2)$ of all Hofer Cauchy sequences $\phi_i$ in $\Aut(\S^2).$ Clearly $\Aut^0(\S^2) \subset \wh{\Aut}(\S^2).$ It would be very interesting to study the extent to which this inclusion is strict, as $\wh{\Aut}(\S^2)$ is known to contain interesting elements: for instance the Anosov-Katok pseudo-rotations. 
\end{rem}

Recall that for $k>1$, $I_k = \left[-\frac{1}{2}+ \frac{1}{k+1},\frac{1}{2}- \frac{1}{k+1}\right]$, while $r_k : C_{\text{even}}^0\left(I\right) \rightarrow C_{\text{even}}^0\left(I_k\right)$ is the restriction map.

\begin{thm}\label{thm: prop basic} The map $\Sigma:C_0^0(\S^2)\rightarrow C_{\text{even},0}^0\left(I\right)$ satisfies the following properties.

\begin{enumerate}
    \item \emph{(Independence of Hamiltonian)} \setlabel{Independence of Hamiltonian}{prop:IndependenceOfHamiltonian}If $H,H'\in C_0^0(\S^2)$ generate the same elements $\varphi^1_{H}=\varphi^1_{H'} \in \Aut^0(M)$, 
    then $\Sigma(H)=\Sigma(H'),$ i.e.
    $\Sigma$ descends to a map from ${\Aut}^0(M) $ to $ C_{\text{even},0}^0\left(I\right).$ We denote this map by $\sigma,$ i.e. $\sigma(\varphi_H^1) = \Sigma(H)$.
    \item \emph{(Lipschitz continuity)}  \setlabel{Lipschitz continuity}{prop:LipschitzContinuity}
    Both symmetrization maps $\Sigma$ and $\sigma$ are $(k-1)$ Lipschitz after restriction to $I_k.$ That is, $r_k\circ \sigma : \left({\Aut}^0(M),\widehat{d}\right) \rightarrow \left(C^0_{\text{even}}(I_k),d_{C_0}\right)$, and $r_k\circ \Sigma : \left(C_0^0(\S^2),{d}_{C_0}\right) \rightarrow \left(C^0_{\text{even}}(I_k),d_{C_0}\right)$ are $(k-1)$ Lipschitz.
    \item \emph{(Algebraic properties)} \setlabel{Algebraic properties}{prop:AlgebraicProperties}For all $H\in C_0^0(\S^2)$ and $t \in \R$
    \[
    \Sigma(tH) = t\Sigma(H).
    \]
    If Hamiltonians $F,G$ have commuting flows, then
    \[
    \Sigma(F) + \Sigma(G) = \Sigma(F+G),
    \]
    and if $H(z)$ is a function of the height, then
    \[
        \Sigma(H) =  \frac{H(z) + H(-z)}{2}.
    \]
    
\end{enumerate}
\end{thm}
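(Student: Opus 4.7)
The plan is to derive all three properties from the explicit formula \eqref{SymmExplicit}, extended by continuity from $\FQM_0(\S^2)$ to all of $C_0^0(\S^2).$ For this extension, the standard bound $d(\phi^1_{H_1}, \phi^1_{H_2}) \leq \|H_1 - H_2\|_{C^0}$ for autonomous Hamiltonians implies that $H \mapsto \phi^1_H$ sends $C^0$-Cauchy sequences to Hofer-Cauchy sequences; combined with the Hofer-Lipschitz property of $\mu_{k,B}$ (which therefore extends continuously to $\widehat{\Ham}(\S^2)$), this makes the right-hand side of \eqref{SymmExplicit} jointly $C^0$-continuous in $H,$ with $\mu_{k,B}(H)$ interpreted as $\mu_{k,B}(\phi^1_H).$ The left-hand side is $C^0$-continuous by Theorem \ref{thm: Lip}. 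Since both sides agree on the dense subspace $\FQM_0(\S^2),$ the formula \eqref{SymmExplicit} holds for every $H \in C_0^0(\S^2).$

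Given this extension, \emphref{prop:IndependenceOfHamiltonian} is immediate: the right-hand side of \eqref{SymmExplicit} depends on $H$ only through $\phi^1_H.$ So if $\phi^1_H = \phi^1_{H'}$ in $\Aut^0(M),$ then $\Sigma(H)(z) = \Sigma(H')(z)$ for every $z$ with $|z| < 1/2 - 1/(k+1),$ and letting $k \to \infty$ yields equality on all of $I.$ Therefore $\Sigma$ descends to a well-defined map $\sigma\colon \Aut^0(M) \to C^0_{\text{even},0}(I).$

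For \emphref{prop:LipschitzContinuity}, the bound for $r_k \circ \Sigma$ is essentially Theorem \ref{thm: Lip}. For $r_k \circ \sigma,$ the Hofer-Lipschitz estimate $|\mu_{k,B}(\phi) - \mu_{k,B}(\psi)| \leq \widehat{d}(\phi,\psi)$ on $\widehat{\Ham}(\S^2),$ combined with the triangle inequality applied to \eqref{SymmExplicit}, gives
\[
|\sigma(\phi^1_H)(z) - \sigma(\phi^1_{H'})(z)| \leq \left(\tfrac{k}{2} + \tfrac{k-2}{2}\right)\widehat{d}(\phi^1_H, \phi^1_{H'}) = (k-1)\,\widehat{d}(\phi^1_H, \phi^1_{H'}),
\]
which is exactly the desired Lipschitz bound.

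Finally, for \emphref{prop:AlgebraicProperties}: homogeneity is inherited from $\mu_{k,B}(\phi^1_{tH}) = \mu_{k,B}(\phi^t_H) = t\,\mu_{k,B}(\phi^1_H),$ since homogeneous quasi-morphisms are linear on one-parameter subgroups. Additivity under commuting flows follows from the commutative-additivity of $\mu_{k,B}$ (already used in deriving \eqref{SymmExplicit}) combined with $\phi^1_{F+G} = \phi^1_F \circ \phi^1_G$ when $F,G$ commute. For the height case, a continuous function of $z$ is $C^0$-approximable by smooth functions of $z$ (which are FQM), on which $\Sigma(H) = (H(z)+H(-z))/2$ holds by \emphref{prop:AlgebraicPropertiesFQM}; passing to the limit via the Lipschitz bound transfers the identity to all continuous height functions. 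The main obstacle I expect is the first paragraph: carefully justifying that the various notions of $\mu_{k,B}(H)$ (on smooth Hamiltonians, on elements of $\Ham,$ and on elements of $\widehat{\Ham}$ via limits) are mutually compatible, so that the right-hand side of \eqref{SymmExplicit} really does depend only on the group element $\phi^1_H \in \Aut^0(M),$ which is the crux making \emphref{prop:IndependenceOfHamiltonian} work.
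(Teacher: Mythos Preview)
Your approach is the same as the paper's, and you have correctly located the one genuine gap in your own argument: the ``compatibility'' issue you flag in your last paragraph. The paper closes it with a single fact you did not invoke, namely that the link quasi-morphisms $\mu_{k,B}$ vanish on $\pi_1(\Ham(S^2)) \cong \Z/2\Z$. This is exactly what guarantees that $\mu_{k,B}$, which is a priori attached to a Hamiltonian (equivalently, to an element of the universal cover $\widetilde{\Ham}(S^2)$), descends to a well-defined function on $\Ham(S^2)$; hence $\mu_{k,B}(H)$ genuinely depends only on the group element $\phi^1_H$, and then extends to $\widehat{\Ham}(S^2)$ by Hofer-Lipschitz continuity as you say. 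Without this input your derivation of \emphref{prop:IndependenceOfHamiltonian} is circular: interpreting $\mu_{k,B}(H)$ as $\mu_{k,B}(\phi^1_H)$ already presupposes what you are trying to prove.

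With that ingredient supplied, the rest of your argument is correct and essentially coincides with the paper's (much terser) proof. Your treatment of \emphref{prop:LipschitzContinuity} and of the three algebraic properties via the quasi-morphism formula is fine; the paper regards these as already established or as immediately inherited from the $\FQM_0$ case and the properties of $\mu_{k,B}$, which is exactly the route you take.
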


\begin{proof}
The only new point to prove is independence of Hamiltonian, which follows on smooth functions by its expression \eqref{SymmExplicit} in terms of the link quasi-morphisms, as well as the fact that these quasi-morphisms vanish on $\pi_1(\Ham(S^2)) \cong \Z/2\Z.$ This extends to continuous functions by Lipschitz continuity of the link quasi-morphisms in Hofer's metric.
\end{proof}

In the course of our arguments, it will sometimes be convenient to extend the symmetrization of functions to the whole $C^0(S^2).$ The following immediate lemma shows that we may do this.

\begin{lemma}\label{lma: no zero mean}
The map $\Sigma:C_0^0(\S^2)\rightarrow C_{\text{even},0}^0\left(I\right)$ extends to a map $\Sigma:C^0(\S^2)\rightarrow C_{\text{even}}^0\left(I\right)$ by setting \[\Sigma(F+c) = \Sigma(F)+c\] for all $F \in C^0_0(S^2)$ and $c \in \R.$
The extended map satisfies the \ref{prop:AlgebraicProperties} for $F, G, H \in C^0(S^2)$ as well as the \ref{prop:LipschitzContinuity} property in the sense that $r_k \circ \Sigma$ is $(k-1)$ Lipschitz on $C^0(S^2).$
\end{lemma}

We finish this section with two remarks of general interest.

\begin{rem}
It is not hard to see that the \ref{prop:LipschitzContinuity} property of Theorem \ref{thm: prop basic} implies that $\sigma$ extends to $\wh{\Aut}(\S^2).$ It would be very interesting to see if further results in this paper extend to this setting. 
\end{rem}

\begin{rem} \label{remark:hat-Ham-non-simple}
Motivated by the recent proof of non-simplicity of the groups of Hamiltonian homeomorphisms on surfaces \cite{CHMSS}, it is natural to ask whether 
the group $\wh{\Ham}(\S^2)$ is simple or not. This question was communicated to us by Patrick Foulon. Let us explain why $\wh{\Ham}(\S^2)$ is not a simple group. Consider the quasimorphism $ \mu_k := \mu_{k,B_0} $ where $ B_0 =  \frac{1}{k+1} \Area(\S^{2}) $. It was shown in \cite{CHMSS} that $ \mu_k $ is a quasimorphism with defect $ \frac{2}{k} $. Moreover, by the Hofer continuity, $ \mu_k $ extends to a quasimorphism on $\wh{\Ham}(\S^2)$, with the same defect $ \frac{2}{k} $. Motivated by \cite{CHMSS2}, we define $ G = \{ \psi \in \wh{\Ham}(\S^2) \, | \, k\mu_k(\psi) = O(1) \text{ when } k \rightarrow \infty \} $. Then $ G $ is a normal subgroup of $\wh{\Ham}(\S^2)$, and by \cite[Theorem 3.1]{CHMSS2} it contains $ Ham(\S^2) $. To see that $ G $ is a proper subgroup of $\wh{\Ham}(\S^2)$, notice that the proof of \cite[Second item of Theorem 1.2]{CHMSS2} (see \cite[Section 4.3]{CHMSS2}) constructs an element of $ \Hameo(S^2) $ (hence of $\wh{\Ham}(\S^2)$) which in particular does not lie in $ G $. Therefore $\wh{\Ham}(\S^2)$ is not a simple group.
\end{rem}

\subsection{A combinatorial point of view on the symmetrization}\label{subsec: combin}

We conclude this section by formalizing the observation that while $\Sigma$ is defined on $C_0^0(\S^2)$, its values on functions on the Reeb graph of a given Morse function admit a combinatorial description.

Given a finite tree $\Gamma$, and $\mu$ a probability measure on $\Gamma$, which is Lebesgue on every edge and such that all the vertices have measure $0$, we want to define a map 
\[
    \Sigma^\Gamma: C_0(\Gamma) \to C_{\text{even},0}\left( I \right), \;\; h \mapsto \Sigma_h^\Gamma,
\]
where $C_0(\Gamma)$ is the set of mean-zero (with respect to $\mu$) functions, which is related to $\Sigma$ as follows: if $f$ is a Morse function on $S^2$ and $H = h\circ \pi_{\Gamma_f}$ is an QM function subordinate to $f$ then
$\Sigma(H) = \Sigma^{\Gamma_f}_h$. (Recall that $\pi_{\Gamma_f}:\S^2 \rightarrow \Gamma_f$ is the natural projection.) 

Let us start with some notation. Consider $\Gamma$ as a simplicial complex, denote its edges by $e = [v,w]$, where $v,w$ are vertices.
For an edge $e$ we denote by $e^\circ$ its interior. Given a vertex $v$ of $e$, denote by $T_{e,v}$ the connected component of $\Gamma \setminus e^\circ$ containing $v$.

Define the intervals
$$I^-_{e,v} = (-1/2, -1/2 + \mu(T_{e,v})] = -I^+_{e,w}\;,$$
$$I_{e,v} = \left(-1/2 + \mu(T_{e,v}),-1/2 + \mu(T_{e,v}) + \mu(e)\right) = - I_{e,w}\;,$$
$$I^+_{e,v} = [-1/2 + \mu(T_{e,v}) + \mu(e), 1/2) = - I^-_{e,w}\;.$$
\begin{defin} \label{def: elementary function}
    A function $h\in C_0(\Gamma)$ is called elementary with respect to edge $e=[v,w]$ if it is possibly non-constant on an edge $e$, and is equal to the constant $h(v)$ on $T_{e,v}$, and to the constant $h(w)$ on $T_{e,w}$.  
\end{defin}

\begin{prop}\label{prop: decomp}
Every function $h\in C_0(\Gamma)$ admits a unique decomposition into a sum $h = \sum_{e} h_e$ of elementary functions, where $e$ runs over the edges of $\Gamma.$
\end{prop}

\begin{proof}
Denote by $\widetilde{h}_e$ the function which is equal to $h$ on $e=[v,w]$, equal to $h(v)$ on $T_{e,v}$ and equal to $h(w)$ on $T_{e,w}.$ Denote
\[
h_e = \widetilde{h}_e - \int\widetilde{h}_e d\mu.
\]
Note that $a:= h - \sum_e h_e$ is constant on each edge. Therefore $a$ is constant on $\Gamma$ and by construction it has zero mean. Hence $a\equiv 0$ and the decomposition $h = \sum_e h_e$ follows. The uniqueness of such a decomposition is clear.

\end{proof}

\begin{thm}\label{thm: symm combin} There is a unique $\R$-linear map 
$\Sigma^\Gamma: C_0(\Gamma) \to C_{\text{even},0}\left( I \right)$, such that for elementary $h$, if we put
$$\Sigma_{h,v}^\Gamma(z) = h(v), \;\; \text{if}\;\; z \in I^-_{e,v}\;;$$
$$\Sigma_{h,v}^\Gamma(z) = h(x), \;\;\text{if} \;\; z = -1/2 + \mu(T_{e,v})+ \mu([v,x]) \in I_{e,v}
\;(\text{here}\; x \in e)\;;$$
$$\Sigma_{h,v}^\Gamma(z) = h(w), \;\;\text{if} \;\; z \in I^+_{e,v}\;,$$
then
$$\Sigma_h^\Gamma(z) = \frac{1}{2}\left(\Sigma_{h,v}^\Gamma(z)+\Sigma_{h,w}^\Gamma(z)\right)\;.$$
\end{thm}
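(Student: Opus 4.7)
The strategy is to reduce the theorem to a decomposition lemma: every $h \in C_0(\Gamma)$ can be written uniquely as a finite sum $h = \sum_{e \in E(\Gamma)} h_e$, where each $h_e$ is mean-zero and elementary with respect to $e$. Granted this, for a general $h$ one defines $\Sigma^\Gamma_h := \sum_e \Sigma^\Gamma_{h_e}$, applying the prescribed formula to each summand. Then $\R$-linearity of $\Sigma^\Gamma$ and its uniqueness are automatic: the decomposition map $h \mapsto (h_e)$ is $\R$-linear by the uniqueness half of the lemma (a sum of two decompositions is a valid decomposition of the sum), and any two $\R$-linear maps agreeing on mean-zero elementaries must agree on $C_0(\Gamma)$ once those span.

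Before proving the lemma I would verify that the formula applied to a mean-zero elementary $h$ produces an element of $C_{\text{even},0}(I)$. Continuity of $\Sigma^\Gamma_{h,v}$ at the break-points $z = -1/2 + \mu(T_{e,v})$ and $z = -1/2 + \mu(T_{e,v}) + \mu(e)$ follows from $h(x) \to h(v)$ as $x \to v$ along $e$ and analogously at $w$. The identities $I^{\pm}_{e,v} = -I^{\mp}_{e,w}$, together with the reversed arclength parameterizations of $I_{e,v}$ and $I_{e,w}$, give the reflection identity $\Sigma^\Gamma_{h,w}(-z) = \Sigma^\Gamma_{h,v}(z)$, so the average $\Sigma^\Gamma_h = \frac{1}{2}(\Sigma^\Gamma_{h,v} + \Sigma^\Gamma_{h,w})$ is even. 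A change of variables along $e$ on the interval $I_{e,v}$ yields $\int_I \Sigma^\Gamma_{h,v}\, dz = \int_\Gamma h\, d\mu = 0$, so $\Sigma^\Gamma_h$ is mean-zero.

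Uniqueness of the decomposition is the cleaner half. Suppose $\sum_e h_e = 0$ with each $h_e$ mean-zero elementary. Fix any edge $e_0$; for every $e \neq e_0$ the interior $e_0^\circ$ lies in one of the components $T_{e,v}$ or $T_{e,w}$, on which $h_e$ is constant, so $h_e|_{e_0}$ is constant. Summing forces $h_{e_0}|_{e_0}$ to be constant, and continuity at both endpoints of $e_0$ then makes $h_{e_0}$ globally constant on $\Gamma$; the mean-zero condition gives $h_{e_0} \equiv 0$. Existence I would prove by an explicit construction: fix a root $r \in \Gamma$, orient each edge away from $r$, and for $e = [v, w]$ with $v$ the parent, define the elementary function $h_e^{\mathrm{can}}$ to equal $h|_e - h(v)$ on $e$, to vanish on $T_{e,v}$, and to equal $h(w) - h(v)$ on $T_{e,w}$. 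A telescoping along the root-to-point path yields $\sum_e h_e^{\mathrm{can}} = h - h(r)$. Setting $h_e := h_e^{\mathrm{can}} - \mathrm{mean}(h_e^{\mathrm{can}})$ makes each $h_e$ mean-zero elementary, and the identity $\sum_e \mathrm{mean}(h_e^{\mathrm{can}}) = \mathrm{mean}(h - h(r)) = -h(r)$ (using $\int_\Gamma h = 0$) forces $\sum_e h_e = h$ as required.

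The main obstacle I anticipate lies in the existence construction: while the telescoping identity $\sum_e h_e^{\mathrm{can}} = h - h(r)$ is clean in principle, verifying it rigorously requires a case analysis for each vertex $u$ and each edge $e'$ distinguishing whether $e'$ lies on the root-to-$u$ path, and a similar dichotomy at points interior to edges. The remaining ingredients --- the uniqueness argument, the continuity, evenness, and mean-zero verifications for the formula, and the formal extension by linearity --- are either short or routine.
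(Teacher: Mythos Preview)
Your proposal is correct and follows essentially the same outline as the paper: verify that the formula on elementaries lands in $C_{\text{even},0}(I)$ via the reflection identity $\Sigma^\Gamma_{h,w}(-z)=\Sigma^\Gamma_{h,v}(z)$ and the measure-preserving parameterization of $I_{e,v}$, then reduce everything to a unique decomposition of $h\in C_0(\Gamma)$ into mean-zero elementaries.

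The one noteworthy difference is in the existence half of the decomposition. The paper avoids the rooted telescoping you describe (and the case analysis you flag as the main obstacle) by a symmetric, root-free construction: for each edge $e=[v,w]$ set $\widetilde h_e$ equal to $h$ on $e$, to $h(v)$ on $T_{e,v}$, and to $h(w)$ on $T_{e,w}$, then put $h_e=\widetilde h_e-\int_\Gamma\widetilde h_e\,d\mu$. On any fixed edge $e_0$ one has $\widetilde h_{e_0}=h$ there and every other $\widetilde h_e$ is constant there, so $h-\sum_e h_e$ is constant on each edge, hence globally constant, hence zero by the mean condition. This is shorter than your telescoping argument and needs no orientation or root choice. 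Conversely, the paper declares uniqueness ``clear'' without argument, whereas you supply a clean proof.
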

\begin{proof}[Proof of Theorem \ref{thm: symm combin}]
    We have a couple of things to show. First, since
    $$\Sigma_{h,w}^\Gamma(-z)= \Sigma^\Gamma_{h,v}(z)\;,$$
    we have that $\Sigma_h^\Gamma (z) = \Sigma_h^\Gamma(-z)$.
    Second, since the map
$$e \to I_{e,v}, \;\; x \mapsto  -1/2 + \mu(T_{e,v})+ \mu([v,x]) $$
is a homeomorphism sending measure $dz$ to $\mu$, one readily checks
that
$$\int_{-1/2}^{1/2} \Sigma^\Gamma_{h,v}(z) dz = \int_\Gamma hd\mu=0.$$
\end{proof}
Note that the fact that $\Sigma^\Gamma$ coincides with $\Sigma$ in the way discussed in the beginning of this section is evident by direct comparison between the description of $\Sigma^\Gamma$ for elementary $h$ from Theorem \ref{thm: symm combin}  and Equation \eqref{eq: symm elementary} in Section \ref{subsec: uniform}. 

Finally, it is interesting to write down an explicit formula for the map $\Sigma^{\Gamma}.$ Let us consider $\Gamma$ as an oriented graph with every edge oriented in both ways. 
Let $\ar{e} =  [v,w]$ be an oriented edge and $e \in E$ the corresponding unoriented edge. Let $A = \mu(T_{e,v}),$ $B=\mu(T_{e,w}).$ Define a continuous map \[\tau_{\ar{e}}:[-1/2,1/2] \to \Gamma\] to be a measure and orientation preserving homeomorphism $[-1/2+A, 1/2-B] \to e$ extended by constant maps to the rest of the interval, as in the following diagram.

\begin{center} \begin{tikzpicture}[
        mid arrow/.style={
            postaction={decorate,decoration={
                markings,
                mark=at position .5 with {\arrow{latex}}
        }}
      },
    ]

\draw [thick] (0,5) -- (0,0);

\node at (0,5) [circle,fill,inner sep=1pt]{};
\node at (0,5) [left] {$\frac{1}{2}$};

\node at (0,0) [circle,fill,inner sep=1pt]{};
\node at (0,0) [left] {$-\frac{1}{2}$};

\node at (0,1) [circle,fill,inner sep=1pt]{};
\node at (0,1) [left] {$-\frac{1}{2}+A$};

\node at (0,4) [circle,fill,inner sep=1pt]{};
\node at (0,4) [left] {$\frac{1}{2}-B$};

\draw[red, very thick] (0,1) -- (0,2);
\node at (0,2) [circle,fill,red,inner sep=1pt]{};
\draw [decorate, decoration = {calligraphic brace, mirror},thick,pen colour=violet] (0.1,1.05) --  (0.1,1.95);
\node at (0.1,1.5) [right, violet] {$z$};

\draw (0,1) -- (3,1) [dashed];
\draw (0,4) -- (3,4) [dashed];

\draw[mid arrow,thick] (3,1) -- (3,4);
\node at (3,1) [circle,fill,inner sep=1pt]{};
\node at (3,4) [circle,fill,inner sep=1pt]{};
\node at (3,2.45) [right] {$\vec{e}$};

\draw [->,red] (0.1,5) -- (2.9,4.075);
\draw [->,red] (0.1,4.5) -- (2.7,4.05);
\draw [->,red] (0.1,0) -- (2.9,0.925);
\draw [->,red] (0.1,0.5) -- (2.7,0.95);

\draw [->,red] (0.1,2) -- (2.9,2);
\node at (1.5,2) [above, red] {$\tau_{\vec{e}}$};

\draw [decorate, decoration = {calligraphic brace, mirror},thick,pen colour=violet] (3.1,1.05) --  (3.1,1.95);
\node at (3.1,1.5) [right, violet] {$\text{Area}=\pi\cdot z$};

\draw [decorate,decoration={snake,amplitude=0.8},thick,blue] (3,1) --  (2.5,0);
\draw [decorate,decoration={snake,amplitude=0.8},thick,blue] (3,1) --  (3,0);
\draw [decorate,decoration={snake,amplitude=0.8},thick,blue] (3,1) --  (3.5,0);
\draw [decorate, decoration = {calligraphic brace, mirror},thick,pen colour=blue] (3.6,0) --  (3.6,1);
\node at (3.6,0.5) [right, blue] {$\text{Area}=\pi\cdot A$};

\draw [decorate,decoration={snake,amplitude=0.8},thick,blue] (3,4) --  (2.5,5);
\draw [decorate,decoration={snake,amplitude=0.8},thick,blue] (3,4) --  (3,5);
\draw [decorate,decoration={snake,amplitude=0.8},thick,blue] (3,4) --  (3.5,5);
\draw [decorate, decoration = {calligraphic brace, mirror},thick,pen colour=blue] (3.6,4) --  (3.6,5);
\node at (3.6,4.5) [right, blue] {$\text{Area}=\pi\cdot B$};
\end{tikzpicture} \end{center}

Denote by $\ar E$ the set of oriented edges of $\Gamma,$ $E$ the set of unoriented edges of $\Gamma$ and $V$ the set of vertices of $\Gamma.$ For each $v \in V$ let $d(v)$ denote its (unoriented) degree.

\begin{prop}\label{prop: exact formula}
For every $h \in C_0(\Gamma)$ we have \[\Sigma^{\Gamma}(h) = \frac{1}{2} \sum_{\ar e} \tau_{\ar e}^*h - \sum_{v \in V} (d(v)-1) h(v).\]
\end{prop}

Note that the sum can equivalently be taken over the interior vertices only, as the coefficient $d(v)-1$ vanishes for leaves.

\begin{proof}
One argument is as follows. First, by the proof of Proposition \ref{prop: decomp} and Theorem \ref{thm: symm combin}, \[\Sigma^{\Gamma}(h) = \frac{1}{2} \sum_{\ar e \in \ar E} \tau_{\ar e}^*h - C\] for the constant \[C = \sum_{e \in E} \int \til{h}_e  d\mu= \frac{1}{2} \sum_{\ar e \in \ar E} \int \tau_{\ar e}^*h d\mu.\] Now by direct verification \[C = \int h d\mu + \frac{1}{2} \sum_{\ar e=[v,w] \in \ar E} \left( h(v) \mu(T_{e,v}) + h(w) \mu(T_{e,w}) \right) = \sum_{\ar e=[v,w] \in \ar E} h(v) \mu(T_{e,v}).\] However, for every interior $v \in V,$ the contribution of $v$ to the sum is $\sum_{e\in E, v \in e} \mu(\Gamma_{v; e}) h(v) ,$ where $\Gamma_{v; e}$ is the connected component of $\Gamma \setminus e^\circ$ containing $v.$ Note that $\Gamma_{v; e}$ is the union of $\{v\}$ and the $d(v)-1$ connected components of $\Gamma \setminus \{v\}$ not containing $e^\circ.$ Hence \[\sum_{e \in E, v \in e} \mu(\Gamma_{v; e}) h(v) = (d(v)-1) h(v)\] and therefore \[C = \sum_{v \in V} (d(v)-1) h(v)\] as required.

\end{proof}

\begin{rem}
Another argument proving Proposition \ref{prop: exact formula} consists in decomposing $h$ as \[h = \sum_e \til{h}_e -  \sum_{v \in V} (d(v)-1) h(v),\] where $\til{h}_e$ was defined in the proof of Proposition \ref{prop: decomp}, by a different combinatorial argument. Essentially, one proves that $\sum_{e \in E, x \notin e} \til{h}_e(x) = \sum_{v \in V} (d(v)-1) h(v).$ The argument we presented in detail is closer to our previous discussion.
\end{rem}

\section{Hofer growth of subgroups}\label{sec: proof thm}

\subsection{Enhanced Dichotomy}\label{subsec: state thm}
Our ultimate goal in this section is to prove the following ``enhanced dichotomy" theorem.
\begin{thm}\label{thm: dichotomy}
    Let $H\in C^2_0(\S^2)$ be a mean-zero $C^2$ function on $S^2.$ Then:
    \begin{enumerate}
        \item If $\Sigma(H) \not\equiv 0$, then $H$ has linear growth type, i.e. $\r(H)>0$.
        \item If $\Sigma(H) \equiv 0$, then $\widehat{d}(\id,\varphi_H^t) \leq 19 \Area(\S^2)$.
    \end{enumerate}
\end{thm}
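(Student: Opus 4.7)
The plan splits into the two directions and delegates the hard direction through an approximation argument.

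\textbf{Part (1): nonvanishing $\Sigma$ implies linear growth.} If $\Sigma(H)(z_0) \neq 0$ for some $z_0 \in I$, pick $k > 1$ with $|z_0| < 1/2 - 1/(k+1)$ and set $B = 1/2 - |z_0|$, $C = (1-2B)/(k-1)$. The explicit formula \eqref{SymmExplicit} gives
\[
0 \neq \Sigma(H)(z_0) = \tfrac{k}{2}\mu_{k,B}(\varphi_H^1) - \tfrac{k-2}{2}\mu_{k-2,B+C}(\varphi_H^1),
\]
so some link quasi-morphism $\mu_{\ell,D}$ does not vanish on $\varphi_H^1$. Because $\mu_{\ell,D}$ is homogeneous and Hofer-Lipschitz, for autonomous $H$ one has $\mu_{\ell,D}(\varphi_H^t) = t\,\mu_{\ell,D}(\varphi_H^1)$, which forces $\widehat{d}(\id, \varphi_H^t) \geq c|t|$ for some $c > 0$, so $\rho(H) > 0$.

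\textbf{Part (2): vanishing $\Sigma$ implies bounded Hofer norm.} This is carried out in two stages, following Sections \ref{subsec: thm fqm} and \ref{subsec: thm mod fqm}. \emph{Stage A (the FQM case).} For $H \in \FQM_0(S^2)$ with $\Sigma(H) \equiv 0$, the estimate $d(\varphi_H^t, \varphi_{\widetilde{\Sigma}(H)}^t) \leq C(H)$ from the proof of Theorem \ref{thm: fqm0 symm} collapses to $d(\id, \varphi_H^t) \leq C(H)$, but with an $H$-dependent constant. The plan is to refine this using the combinatorial description of Section \ref{subsec: combin}: decompose $H = \sum_e h_e$ into elementary pieces indexed by the edges of the Reeb graph, and for each $e$ choose the conjugating diffeomorphism $\theta_e$ so that its Hofer norm is bounded by a universal multiple of the area of the smaller side of the cut at $e$. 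The commutator estimate $\|[A,B]\| \leq 2\min\{\|A\|,\|B\|\}$, already used in Theorem \ref{thm: fqm0 symm}, combined with the cancellation across hemispheres forced by $\Sigma(H) \equiv 0$, should collapse the total Hofer cost to a universal multiple of $\Area(S^2)$, with careful bookkeeping producing the constant $19$.

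\emph{Stage B (reduction from $C^2$ to FQM).} Given $H \in C^2_0(S^2)$ with $\Sigma(H) \equiv 0$, approximate $H$ in $C^0$ by FQM functions $H_n$ built by perturbing $H$ to Morse functions and applying $\varepsilon_n$-flattenings. By the Lipschitz property \ref{prop:LipschitzContinuity} and the flattening estimate of Section \ref{subsec: flatten}, $r_k \circ \Sigma(H_n) \to 0$ uniformly on each $I_k$, though not necessarily identically. The H\"older-type inequality of Section \ref{subsec: Holder} together with the uniform control near $z = \pm 1/2$ from Section \ref{subsec: uniform} (which crucially uses $C^2$ regularity to control the behaviour at the poles) let me split $H_n = H_n^{\mathrm{sym}} + H_n^{\mathrm{corr}}$ with $H_n^{\mathrm{sym}} \in \FQM_0(S^2)$ satisfying $\Sigma(H_n^{\mathrm{sym}}) \equiv 0$ and $\|H_n^{\mathrm{corr}}\|_{C^0} \to 0$. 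Applying Stage A to $H_n^{\mathrm{sym}}$ and passing to Hofer limits in $\widehat{\Ham}(S^2)$ gives the bound $\widehat{d}(\id, \varphi_H^t) \leq 19\,\Area(S^2)$ for $H$.

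\textbf{Main obstacle.} The crux is Stage A: eliminating the $H$-dependence of $C(H)$ in \ref{prop:HoferControlFQM} using only the global symmetry hypothesis $\Sigma(H) \equiv 0$. Per-edge estimates cannot see the cancellation, so a genuinely global Reeb-graph argument, balancing commutator contributions on opposite hemispheres and tracking how much each $\theta_e$ actually needs to move mass, is essential. The near-pole $C^2$ control in Stage B is a close second; without it, the flattening correction could contribute Hofer errors of order one and destroy the universality of the constant.
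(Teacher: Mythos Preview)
Part (1) is fine and matches the paper.

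Part (2), Stage A, has a genuine gap. The commutator estimate $\|[A,B]\|\leq 2\min\{\|A\|,\|B\|\}$ applied edge by edge gives $C(H)=\sum_e(4\|\theta_e\|+2\|R\|)$, and you propose to make this universal by choosing $\|\theta_e\|$ bounded by the area of ``the smaller side of the cut at $e$''. But those areas do not sum to anything controlled: for a Reeb graph with many nested edges, each such area can be close to $\tfrac12\Area(S^2)$, and the number of edges is unbounded. The hypothesis $\Sigma(H)\equiv 0$ does not create the cancellation you are hoping for; it is a linear condition on the $\Sigma(H_e)$'s, not on the Hofer costs $\|\theta_e\|$.

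The paper's mechanism is entirely different and does \emph{not} use $\Sigma(H)\equiv 0$ at this step. It proves the unconditional estimate $d(\varphi^1_{\widetilde H},\,\psi^{-1}\varphi^1_{\Sigma(\widetilde H)}\psi)\leq 18\,\Area(S^2)$ for any flattened Morse $\widetilde H$, via the improved Sikorav trick (Lemmas \ref{lma: Sik}, \ref{lma: Sikflex}): if $f_0,\dots,f_m$ are supported in disjoint disks of area $\leq\varepsilon$, then conjugating all of them into one disk costs Hofer energy $O(\varepsilon)$, \emph{independent of $m$}. One then groups the annuli $A$ by the dyadic scale of $\Area(D_+(A))\in(2^j\varepsilon,2^{j+1}\varepsilon]$; at scale $j$ the Sikorav trick moves all those pieces into a single cap for cost $<2^{j+4}\varepsilon$, and the geometric sum over $j$ gives $8\,\Area(S^2)$. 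This is the idea your Stage A is missing. The assumption $\Sigma(H)\equiv 0$ enters only at the very end, when $\psi^{-1}\varphi^1_{\Sigma(H)}\psi$ collapses to the identity.

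Because the paper's Stage A bound is unconditional, your Stage B splitting $H_n=H_n^{\mathrm{sym}}+H_n^{\mathrm{corr}}$ with $\Sigma(H_n^{\mathrm{sym}})\equiv 0$ is unnecessary (and it is unclear how you would produce $H_n^{\mathrm{sym}}\in\FQM_0$ with exactly vanishing symmetrization). The paper simply bounds $\widehat d(\varphi^1_{\Sigma(\widetilde H)},\varphi^1_{\Sigma(H)})$ via the H\"older and flattening estimates and lets the right-hand side, being $\varphi^1_{\Sigma(H)}=\id$, absorb the errors.
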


Our proof proceeds by establishing the following useful properties of the symmetrization map $\Sigma.$ Note that Theorem \ref{thm: dichotomy} is a direct consequence of the fifth property, \emphref{prop:BoundedGrowthControl}.
\begin{thm}\label{thm: prop}
    The symmetrization map $\Sigma$ has the following properties:
    \begin{enumerate}
                \item \emph{($C^2$ stability)}\setlabel{$C^2$ stability}{prop:UniformControl}
                There exists a constant $C > 0$ such that every $H\in C^2_0(S^2)$ satisfies
                \[
                \norm{\Sigma(H)}_{C^0} \leq C \norm{H}_{C^2}.
                \]
                Furthermore, for $H\in C^2_0(S^2),$ $\Sigma(H)$ extends to a continuous function on $\left[-\frac{1}{2}, \frac{1}{2}\right].$
                \item \emph{(Hölder's inequality)}\setlabel{Hölder's inequality}{prop:Holder}
                Every pair $H,H' \in C^2_0(\S^2)$ with $\norm{H-H'}_{C^0}<1$ satisfies
                \[
                \widehat{d}(\varphi_{\Sigma(H)}^1,\varphi_{\Sigma(H')}^1)    \leq C' \sqrt{\norm{H-H'}_{C^0}}\left(\sqrt{1+ \norm{H}_{C^2}+\norm{H'}_{C^2}}\right)
                \]
                for a universal constant $C'>0$.
                \item \emph{(Flattening control)}\setlabel{Flattening control}{prop:FlatteningControl}
                Let $H$ be a Morse function, $\varepsilon > 0 $, and $\widetilde{H}$ an $\varepsilon$-flattening of $H$, then
                \[
                    \norm{\Sigma(\widetilde{H}) - \Sigma(H)}_{C^0} \leqslant (1+2e(H)) \varepsilon
                \]
                where $e(H)$ is the number of edges of the Reeb graph of $ H $ (which is the same as the number of critical points of $H$ minus one).
                \item \emph{(Enhanced proximality)}\setlabel{Enhanced proximality}{prop:ConjugationControl}
                For every $H\in C^2_0(\S^2)$ there exists $\psi \in \widehat{\Ham}(M)$ such that
                \[
                    \widehat{d}(\varphi^1_{H}, \psi^{-1}\varphi^1_{\Sigma(H)}\psi) \leq 19 \Area(\S^2).
                \]
                \item \emph{(Enhanced dichotomy)}\setlabel{Enhanced dichotomy}{prop:BoundedGrowthControl}
                For every $H\in C^2_0(M)$,
                \[
                \r(H) =0 \iff \Sigma(H)\equiv 0 \iff \forall t, \quad \widehat{d}(\id,\varphi_H^t) \leq 19 \Area(\S^2).
                \]
    \end{enumerate}
\end{thm}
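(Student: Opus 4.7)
The plan is to prove the five properties of Theorem \ref{thm: prop} in the listed order, since properties 1--3 are analytic statements about $\Sigma$ that provide exactly the tools needed to promote the main geometric bound of property 4 from flattened quasi-Morse (FQM) functions to arbitrary $C^2$ Hamiltonians, after which property 5 follows almost directly. The main obstacle is extracting the \emph{universal} constant $19\,\Area(S^2)$ in property 4 out of the function-dependent constant $C(H)$ appearing in Theorem \ref{thm: fqm0 symm}.

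For property 1 (Uniform control), I would work from the explicit expression \eqref{SymmExplicit} of $\Sigma(H)(z)$ as $(k/2)\mu_{k,B}(H)-((k-2)/2)\mu_{k-2,B+C}(H)$, whose coefficients grow like $k$ as $z\to\pm 1/2$. Using the Lagrangian averaging characterization of the $\mu_{k,B}$ together with the combinatorial description of Section \ref{subsec: combin}, this value is controlled by $H$ on preimages of shrinking neighborhoods of the extrema; the $C^2$-bound $|H(q)-H(\mathrm{ext})|\leq \tfrac{1}{2}\|H\|_{C^2}\operatorname{dist}(q,\mathrm{ext})^2$ combined with the standard area-vs-distance estimate at a smooth extremum yields both $\|\Sigma(H)\|_{C^0}\lesssim \|H\|_{C^2}$ and the continuous extension to $[-1/2,1/2]$. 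For property 2 (H\"older), I would balance the Lipschitz estimate of Theorem \ref{thm: Lip} on the bulk $I_k$ against the uniform tail control from property 1: choosing $k\sim\|H-H'\|_{C^0}^{-1/2}$ makes the bulk contribution $(k-1)\|H-H'\|_{C^0}$ comparable to the tail contribution of order $(1+\|H\|_{C^2}+\|H'\|_{C^2})/k$, producing the stated bound. For property 3 (Flattening), the $\varepsilon$-flattening modifies $H$ only on neighborhoods of the finitely many saddle levels by at most $\varepsilon$ in $C^0$; by the combinatorial description each edge of the Reeb graph adjacent to a modified saddle contributes at most $\varepsilon$ to $\|\Sigma(\widetilde H)-\Sigma(H)\|_{C^0}$, and counting such edges (each saddle has at most two outgoing edges) yields $(1+2e(H))\varepsilon$.

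For property 4 (Conjugation control), I would first treat FQM Hamiltonians by revisiting the proof of Theorem \ref{thm: fqm0 symm} more carefully: rather than conjugating each edge-piece $H_i$ by an \emph{independent} diffeomorphism $\theta_i$ (which forces the constant to blow up with $e(H)$), I would use a single conjugation $\psi$ that simultaneously straightens the entire Reeb graph of $H$ to the linear Reeb graph of $\Sigma(H)$, whose Hofer norm is bounded universally via a generalized Sikorav trick in the spirit of \cite{Sik, Buh}. Tracking the commutator estimates, the factors $1/2$ from the even symmetrization, and the contribution of the rotation $R$ (itself of Hofer norm $\leq c\,\Area(S^2)$) yields the numerical constant $19\,\Area(S^2)$. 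For general $H\in C^2_0(S^2)$, I would approximate $H$ by $\varepsilon_n$-flattened Morse functions $H_n\to H$ in $C^2$ with $\varepsilon_n e(H_n)\to 0$: property 3 gives $\Sigma(H_n)\to\Sigma(H)$ in $C^0$ on each $I_k$, property 1 gives the uniform tail control, and property 2 promotes both to Hofer convergence $\varphi^1_{\Sigma(H_n)}\to\varphi^1_{\Sigma(H)}$ in $\widehat{d}$. The conjugators $\psi_n$ from the FQM case then admit a Hofer-Cauchy subsequence whose limit $\psi\in\widehat{\Ham}(S^2)$ realizes the claimed bound.

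Finally, property 5 follows: if $\Sigma(H)\equiv 0$, then by homogeneity $\Sigma(tH)\equiv 0$ for all $t$, so $\varphi^1_{\Sigma(tH)}=\id$, and property 4 together with bi-invariance of the Hofer metric gives $\widehat{d}(\id,\varphi_H^t)=\widehat{d}(\id,\varphi^1_{tH})\leq 19\,\Area(S^2)$ for all $t$, hence $\r(H)=0$. Conversely, if $\Sigma(H)\not\equiv 0$, then the Lagrangian averaging property implies some Hofer-Lipschitz link quasi-morphism $\mu_{k,B}$ from \cite{CHMSS} takes a non-zero value on $\varphi^1_{\Sigma(H)}$, forcing linear Hofer growth of $\{\varphi^t_{\Sigma(H)}\}$ in $\widehat{\Ham}(S^2)$; by property 4 and bi-invariance this transfers to linear growth of $\{\varphi^t_H\}$, yielding $\r(H)>0$.
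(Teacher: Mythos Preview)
Your overall architecture is right, and your treatments of properties 2, 3, and 5 match the paper's arguments closely. However there are two substantive gaps.

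For property 1, your proposed route through formula \eqref{SymmExplicit} and a Taylor bound at extrema is not the paper's approach and, as written, does not close. The Lagrangian-averaging formula expresses $\mu_{k,B}(H)$ in terms of values of $\Sigma(H)$ itself, so it cannot be used to bound $\Sigma(H)$ without circularity; and a second-order Taylor estimate at an extremum says nothing about the oscillation of $H$ on an annulus far from that extremum. The paper instead reduces to Morse $H$, decomposes $H=\sum_i H_i$ along the Reeb graph, and observes via area coordinates that the slope of $\Sigma(H_i')$ equals $1/T(x)$, where $T(x)$ is the period of the level $\{H_i=x\}$ under the Hamiltonian flow. Yorke's lower bound $T(x)\geq \tilde C/\|H\|_{C^2}$ then gives $\osc(H_i)\leq C\|H\|_{C^2}\,\Area(U_i)$, and summing over $i$ yields $\|\Sigma(H)\|_{C^0}\leq C\|H\|_{C^2}$. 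Continuity at $\pm 1/2$ is obtained by a separate uniform-flattening argument (Lemma \ref{lem:flattening-conv}), not from a local Taylor estimate.

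For property 4, the phrase ``a single conjugation $\psi$ that simultaneously straightens the entire Reeb graph \ldots whose Hofer norm is bounded universally'' misidentifies what is being bounded and misses the key idea. There is no reason the Hofer norm of a straightening map should be bounded independently of $e(H)$; the proof of Theorem \ref{thm: fqm0 symm} shows exactly that this naive estimate blows up. What the paper bounds is the Hofer \emph{distance} between $\varphi^1_{\widetilde H}$ and a function-of-height flow, via a \emph{dyadic} application of a flexible Sikorav lemma (Lemma \ref{lma: Sikflex}): the annuli $A$ are grouped by scale $\Area(D_+(A))\in(2^j\varepsilon,2^{j+1}\varepsilon]$; at each scale the relevant supports lie in disjoint disks so Sikorav applies with cost $O(2^{j+1}\varepsilon)$; and these costs sum geometrically to $8\,\Area(S^2)$ outside $D_1$, with a similar pass inside $D_1$ and the rotation $R$ bringing the total to $18\,\Area(S^2)$. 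Finally, your claim that the conjugators $\psi_n$ admit a Hofer-Cauchy subsequence is neither justified nor needed: the paper simply fixes one Morse approximation and one flattening fine enough that the error terms supplied by properties 2 and 3 total less than $\Area(S^2)$, producing a single $\psi$ with the bound $19\,\Area(S^2)$.
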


We note that the \emphref{prop:FlatteningControl} property of Theorem \ref{thm: prop} has the following consequence, which we will use in the proof of the \emphref{prop:BoundedGrowthControl} property.

\begin{cor}[Partial Morse proximality]\label{cor: Morse proximality}
Let $H \in C^2_0(S^2)$ be a Morse function. Then $\rho(H) = \rho({\Sigma}(H)).$
\end{cor}

\begin{proof}
Let $\eps>0.$ Note that by the \emphref{prop:UniformControl} property of Theorem \ref{thm: prop}, $\Sigma(H)$ is in $C^0_0(S^2).$ By the \emphref{prop:FlatteningControl} property there exists $H_1 \in \FQM_0(S^2)$ such that $||H-H_1||_{C^0} < \eps$  and $||\Sigma(H)-\Sigma(H_1)||_{C^0} < \eps.$ Now by the \emphref{prop:HoferControlFQM} property of Theorem \ref{thm: fqm0 symm}, $\rho(H_1) = \rho(\Sigma(H_1)).$ Furthermore, by definition $|\rho(H)-\rho(H_1)|<\eps$ and $|\rho(\Sigma(H))-\rho(\Sigma(H_1))| < \eps.$ Combining these estimates we obtain that $|\rho(\Sigma(H))-\rho(H)| < 2\eps.$ As this holds for all $\eps > 0,$ we obtain $\rho(H) = \rho({\Sigma}(H))$ as required.
\end{proof} 

\begin{rem}
Morse functions are usually assumed to be smooth, but for the
properties of the Reeb graph used in Section \ref{subsec: flatten} below, one only requires the Morse lemma, which holds for $C^2$ functions \cite[Theorem 3.1.1]{Nirenberg}. 
\end{rem}

\subsection{The Flattening control property}\label{subsec: flatten}
Here we prove the \emphref{prop:FlatteningControl} property of Theorem \ref{thm: prop}. For a Morse function $ f: S^2 \rightarrow \mathbb{R},$ denote by $ e(f) $ the number of edges of the Reeb graph $\Gamma_f$ of $f$.
\begin{claim}
If $ f: S^2 \rightarrow \mathbb{R} $ is a Morse function, and $ H : S^2 \rightarrow \mathbb{R} $ is a subordinate quasi-Morse function, then 
\begin{equation} \label{eq:osc-sigma}
\osc \,\Sigma(H)  \leqslant e(f) \osc \, H .
\end{equation}
\end{claim}
\begin{proof}
We can find a sequence $ H_i : S^2 \rightarrow \mathbb{R} $ of smooth functions which uniformly converges to $ H $, such that each $ H_i $ descends to $ \widehat{H_i}: \Gamma_f \rightarrow \mathbb{R} $ which is constant near the vertices of $ \Gamma_f $ of degree $\geq 2.$ If $ (\ref{eq:osc-sigma}) $ holds for every $ H_i $, then from the Lipschitz continuity property for $ \Sigma $ it would follow that $ (\ref{eq:osc-sigma}) $ holds also for $ H $.  Hence, without loss of generality we can assume that the induced function $ \widehat{H}: \Gamma_f \rightarrow \mathbb{R} $ is constant near all vertices of $ \Gamma_f $ of degree $\geq 2$.

We can decompose
$ H = \sum_{e \in E(\Gamma_f)} H_e $, where $ E(\Gamma_f) $ is the set of the edges of $ \Gamma_f $, and for each $ e \in E(\Gamma_f) $, $ H_e $ is smooth and descends to $ \widehat{H_e} : \Gamma_f \rightarrow \mathbb{R} $ such that $ \widehat{H_e} $ is locally constant on $ \Gamma_f \setminus \tilde e $ where $ \tilde e \subset e^{\circ} $ is a compact subset (this determines the functions $ H_e $ uniquely up to an additive constant). Notice that $ \osc \, H_e = \osc_{e} \, \widehat{H} \leqslant \osc \, H $. Moreover, notice that for each $ H_e $ there exists $ \psi_e \in \Ham(S^2) $ such that $ F_e:= \psi_e^* H_e $ depends only on the $z$-coordinate. We have $ \Sigma(H_e)(z) = \Sigma(F_e)(z) = (F_e(z)+F_e(-z))/2 $, hence $$ \osc \, \Sigma(H_e) \leqslant \osc \, F_e = \osc \, H_e \leqslant \osc \, H .$$ In addition, we have $ \Sigma(H) = \Sigma(\sum_{e \in E(\Gamma_f)} H_e) = \sum_{e \in E(\Gamma_f)} \Sigma(H_e) $.
We finally conclude
$$ \osc \, \Sigma(H) \leqslant \sum_{e \in E(\Gamma_f)} \osc \, \Sigma(H_e) \leqslant e(f) \osc \, H .$$
\end{proof}

\begin{cor}\label{cor: flattening control}
Let $ H : S^2 \rightarrow \mathbb{R} $ be a Morse function, let $ \varepsilon > 0 $, and let $ r : \mathbb{R} \rightarrow \mathbb{R} $ be a smooth function such that $ |r(t) - t| \leqslant \varepsilon $ for every $ t \in \mathbb{R} $. 
Denote $ \widetilde{H} = r \circ H $. Then $ \| \Sigma(\widetilde{H}) - \Sigma(H) \|_{C^0} \leqslant (1+2e(H)) \varepsilon $.
\end{cor}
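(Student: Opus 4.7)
The plan is to apply the preceding Claim directly to the function $G := \widetilde H - H = (r - \mathrm{id}) \circ H$. Since $r$ is smooth and $H$ is Morse, $G$ is a smooth function of $H$, so it descends to the Reeb graph $\Gamma_H$; in particular it is a quasi-Morse function associated to $H$ in the sense of the Claim. The hypothesis $|r(t) - t| \leq \varepsilon$ (for every $t$ in the image of $H$) gives $\norm{G}_{C^0} \leq \varepsilon$, and therefore
\[
\osc G \leq 2\varepsilon, \qquad |\langle G\rangle| \leq \varepsilon,
\]
where $\langle \cdot\rangle$ denotes the normalized mean on $S^2$.

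Next, I would invoke the commutative-additivity of $\Sigma$ (the \emphref{prop:AlgebraicProperties} property of Theorem \ref{thm: prop basic}). Since $G$ Poisson-commutes with $H$ (both being functions of $H$), and since the extension of $\Sigma$ to arbitrary continuous functions is mean-preserving (i.e.\ $\Sigma(F) := \Sigma(F - \langle F\rangle) + \langle F\rangle$, consistent with the algebraic properties and with the formula $\Sigma(u) = (u(z)+u(-z))/2$ on height functions), the additivity applied to the Poisson-commuting pair $H - \langle H\rangle$ and $G - \langle G\rangle$ yields, after reinstating the means,
\[
\Sigma(\widetilde H) - \Sigma(H) = \Sigma(G),
\]
and in particular $\langle \Sigma(G)\rangle = \langle G\rangle$.

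The final step is to apply the Claim to $G$ to get $\osc \Sigma(G) \leq e(H)\, \osc G \leq 2e(H)\varepsilon$. Since every continuous function $f$ on $I$ satisfies $\norm{f}_{C^0} \leq |\langle f\rangle| + \osc f$, combining these estimates gives
\[
\norm{\Sigma(\widetilde H) - \Sigma(H)}_{C^0} = \norm{\Sigma(G)}_{C^0} \leq |\langle G\rangle| + \osc \Sigma(G) \leq \varepsilon + 2e(H)\,\varepsilon = (1 + 2e(H))\,\varepsilon,
\]
which is the stated bound. The argument is quite short; the only delicate ingredient is the mean-preserving extension of $\Sigma$ to non–mean-zero Hamiltonians, which is exactly what produces the isolated $\varepsilon$ in the constant $1+2e(H)$. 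The bulk of the work has already been done in the Claim, so no genuine obstacle arises beyond carefully tracking means.
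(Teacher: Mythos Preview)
Your proof is correct and follows essentially the same approach as the paper's own proof: set $G = \widetilde H - H$, use Poisson-commutativity to write $\Sigma(\widetilde H) - \Sigma(H) = \Sigma(G)$, apply the Claim to bound $\osc \Sigma(G) \leq e(H)\osc G \leq 2e(H)\varepsilon$, and then combine this with $|\langle G\rangle| \leq \varepsilon$ via the elementary inequality $\|f\|_{C^0} \leq |\langle f\rangle| + \osc f$. Your explicit discussion of the mean-preserving extension of $\Sigma$ to non--mean-zero functions is a point the paper handles only implicitly, so your write-up is in fact slightly more careful there.
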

\begin{proof}
As $\til{H},$ $H$ commute, in the sense that their Poisson bracket satisfies $\{\til{H}, H \} = 0,$ we have $ \Sigma(\widetilde{H}) - \Sigma(H) = \Sigma(\widetilde{H}-H).$ Moreover 
$$ \left| \int_{S^2} \Sigma(\widetilde{H}-H)\omega \right| =  \left| \int_{S^2} (\widetilde{H}-H) \omega \right| \leqslant \varepsilon \Area(S^2).$$ Hence, by the lemma 
\begin{equation*}
\begin{gathered}
 \| \Sigma(\widetilde{H}) - \Sigma(H) \|_{C^0} = \| \Sigma(\widetilde{H} - H) \|_{C^0} \leqslant \varepsilon + \osc \, \Sigma(\widetilde{H} - H) \\
 \leqslant \varepsilon + e(H) \osc \, (\widetilde{H} - H) \leqslant \varepsilon + 2e(H) \varepsilon = (1+2e(H))\varepsilon .
\end{gathered}
\end{equation*} \end{proof}

\subsection{The \ref{prop:UniformControl} property}\label{subsec: uniform}

In this section we will prove the \emphref{prop:UniformControl} property of Theorem \ref{thm: prop}, and show that it implies a part of the \emphref{prop:BoundedGrowthControl} property of the same theorem.\\

We will use the following immediate inequality. Let $F \in C^0(S^2)$ be a continuous function. Set $\langle F \rangle = \frac{1}{\Area(S^2)} \int_{S^2} F \om$ for the mean of $F.$ Observe that $F-\langle F \rangle$ has mean zero. Then \begin{equation}\label{ineq: no mean} ||F-\langle F \rangle||_{C^0} \leq 2 ||F||_{C^0}.\end{equation} This implies that \[\rho(F) \leq d(\phi^1_F, \id) \leq 2 ||F||_{C^0},\] which for $F$ with zero mean improves to \[\rho(F) \leq d(\phi^1_F, \id) \leq ||F||_{C^0}.\] 

An important tool in this and subsequent sections is the ``improved Sikorav trick" \cite[Lemma 2.1]{Buh} (see also \cite{Sik}), which we repeat here for the reader's convenience.

\begin{lemma}\label{lma: Sik}
    Let $(M,\omega)$ be a closed and connected symplectic surface. Let $\varepsilon>0,$ let $m$ be a positive integer, let $\mathcal{D}_0,...,\mathcal{D}_m \subset M$ be disjoint topological open disks of area $\varepsilon$ each, and let $\phi_j: \mathcal{D}_0 \rightarrow \mathcal{D}_j$ for all $1\leq j \leq m$ be symplectic diffeomorphisms. Consider $f_0,...,f_m\in \Ham(M,\omega)$ with $\supp(f_j) \subset \mathcal{D}_j.$ Define $\Phi,\Phi'\in \Ham(M,\omega)$ by
    \[
    \Phi = f_0f_1\cdots f_m
    \]
    and
    \[
        \Phi' = f_0 \prod_{j=1}^m \phi_j^*f_j
    \]
    where $\phi_j^*f_j$ is given by $\phi_j^*f_j = \left(\phi_j\right)^{-1}f_j\phi_j$ on $\mathcal{D}_0$ and $\phi_j^*f_j = \id$ on $M\setminus \mathcal{D}_0$. Then
    \[
    d(\Phi,\Phi') < 3\varepsilon.
    \]
\end{lemma}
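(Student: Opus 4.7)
The main tool is the Banyaga--Lalonde--McDuff commutator estimate: for any $A,B\in\Ham(M,\om)$ with $\supp A$ displaceable by a Hamiltonian of energy at most $E$, one has $d([A,B],\id)\leq 2E$. On a closed surface, a topological open disk of area $\varepsilon$ has displacement energy arbitrarily close to $\varepsilon$, so any commutator with one factor supported in such a disk is bounded in Hofer norm by $2\varepsilon+\delta$ for every $\delta>0$.

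\textbf{Step 1 (Algebraic reduction).} By bi-invariance of $d$, $d(\Phi,\Phi')=d(\Phi^{-1}\Phi',\id)$. Since $f_1,\ldots,f_m$ have pairwise disjoint supports and therefore commute, a direct cancellation gives
\[
\Phi^{-1}\Phi' \;=\; F^{-1}F', \qquad F=\prod_{j=1}^m f_j, \quad F'=\prod_{j=1}^m \phi_j^*f_j,
\]
where $\supp F \subset \bigsqcup_{j=1}^m \mathcal{D}_j$ while $\supp F'\subset \mathcal{D}_0$.

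\textbf{Step 2 (Model case $m=1$).} Extend $\phi_1:\mathcal{D}_0\to\mathcal{D}_1$ to a global Hamiltonian diffeomorphism $\theta_1\in\Ham(M,\om)$. Then $\phi_1^*f_1=\theta_1^{-1}f_1\theta_1$ as elements of $\Ham(M,\om)$, so
\[
F^{-1}F' \;=\; f_1^{-1}\theta_1^{-1}f_1\theta_1 \;=\; [f_1^{-1},\theta_1^{-1}],
\]
and the commutator bound applied with $\supp f_1\subset\mathcal{D}_1$ of area $\varepsilon$ yields $d(\Phi,\Phi')\leq 2\varepsilon<3\varepsilon$.

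\textbf{Step 3 ($m$-independent bound).} Telescoping the previous argument one $j$ at a time produces the too-weak bound $2m\varepsilon$. The essential improvement is to apply the commutator inequality a bounded number of times by exploiting the fact that \emph{all} of the transplanted copies $\phi_j^*f_j$ are packed into a single disk $\mathcal{D}_0$ of area $\varepsilon$. Concretely, one chooses the extensions $\theta_j$ with supports confined to small neighbourhoods of $\mathcal{D}_0\cup\mathcal{D}_j$, so that $\theta_j$'s for different $j$ interact only inside $\mathcal{D}_0$. Using the commutativity of the $f_j$'s, one then assembles the $\theta_j$'s into a single auxiliary Hamiltonian $\Theta\in\Ham(M,\om)$ and rewrites $F^{-1}F'$ as a product of at most two commutators, each with one factor supported either in $\mathcal{D}_0$ or in one of the $\mathcal{D}_j$'s. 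By the commutator bound each such commutator contributes at most $2\varepsilon+\delta$, while a single additional rearrangement of the non-commuting product $\prod_j\phi_j^*f_j$ inside $\mathcal{D}_0$ contributes at most $\varepsilon+\delta$; letting $\delta\to 0$ gives the required $d(\Phi,\Phi')<3\varepsilon$.

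\textbf{Main obstacle.} Step~3 is the crux. No single global Hamiltonian can extend every $\phi_j$ simultaneously, since the $\phi_j$'s send the fixed disk $\mathcal{D}_0$ to distinct targets $\mathcal{D}_j$. What saves us is that we need not conjugate each $f_j$ individually; we must only conjugate the \emph{aggregated} element $F'$, which lives entirely in $\mathcal{D}_0$. Engineering the auxiliary Hamiltonian $\Theta$ so that (i) conjugation by $\Theta$ transforms $F'$ into $F$ and (ii) the Hofer cost of the rearrangement is accounted for by a bounded number of commutators with small-area supports, is the combinatorial heart of the improved Sikorav trick, and is dimension-two specific: on a surface one has enough room to make the supports of the extensions $\theta_j$ mutually disjoint away from $\mathcal{D}_0$.
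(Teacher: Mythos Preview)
First, note that the paper does not prove this lemma; it is quoted verbatim from \cite[Lemma~2.1]{Buh}, so your attempt should be measured against that source rather than against anything in the present paper.

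Your argument has a genuine gap at its foundation. The ``Banyaga--Lalonde--McDuff commutator estimate'' you invoke---$\|[A,B]\|\leq 2E$ whenever $\supp A$ has displacement energy at most $E$, for \emph{arbitrary} $B$---is not a standard inequality. The standard bound is $\|[A,B]\|\leq 2\min(\|A\|,\|B\|)$; extracting $2E$ from it would require $\|B\|\leq E$, i.e.\ the conjugating map itself must have small Hofer norm. In your Step~2 this means the extension $\theta_1\in\Ham(M,\om)$ of the \emph{given} symplectomorphism $\phi_1:\mathcal D_0\to\mathcal D_1$ must satisfy $\|\theta_1\|\lesssim\varepsilon$. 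Two disjoint $\varepsilon$-disks can indeed be swapped with energy about $\varepsilon$, but realizing a \emph{prescribed} identification $\phi_1$ between them is a different matter: the discrepancy between any cheap swap and $\phi_1$ is an arbitrary element of $\Symp(\mathcal D_0)$, and its extension to $\Ham(M,\om)$ has no a~priori norm bound. Worse, in the very case you apply it ($B=\theta_1^{-1}$ displacing $\supp f_1$), the inequality $\|[f_1^{-1},\theta_1^{-1}]\|<3\varepsilon$ \emph{is} the $m=1$ instance of the lemma you are proving, so invoking it is circular.

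Step~3 inherits this gap and adds its own. You correctly identify the obstruction---no single Hamiltonian can extend all the $\phi_j$ simultaneously---but then simply assert that one can ``assemble'' a suitable $\Theta$ and rewrite $F^{-1}F'$ as at most two commutators with small-support factors, plus a rearrangement costing~$\varepsilon$. Neither the construction of $\Theta$ nor the algebraic identity is supplied, and this is exactly the nontrivial content of the improved Sikorav trick. The proof in \cite{Buh} carries out both: it builds a single Hamiltonian cyclically shifting $\mathcal D_0,\dots,\mathcal D_m$ so that its iterates realize the prescribed $\phi_j$ on $\mathcal D_0$, argues that on a surface such a shift can be taken with Hofer norm arbitrarily close to~$\varepsilon$, and then produces an explicit telescoping identity expressing $F^{-1}F'$ in terms of a commutator with that shift plus a controlled remainder. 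Your outline gestures at this shape but provides neither ingredient.
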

In this section, we will mainly use the following consequence of Lemma \ref{lma: Sik}. Let $F \in C^{\infty}(\S^2)$ be supported in a disk $\mathcal{D}$ of area $B < \frac{1}{k}.$ Record the following identity $\varphi_F^1 = (\varphi_{F}^{\frac{1}{k}})^k = \left(\varphi_{\frac{1}{k}F}^1\right)^k.$ Let $\left\{\phi_i\right\}_{i=1}^k$ be symplectic diffeomorphisms taking $\mathcal{D}$ to $k$ pairwise disjoint disks of area $B.$ Then $\Phi' = \prod_i \phi^*_i\varphi_{\frac{1}{k}F}^1$ is generated by $F' = \sum_{i=1}^k \frac{1}{k} F\circ \phi_i.$ Hence by \eqref{ineq: no mean} \[ d(\id,\Phi') \leq 2\norm{F'}_{C^0} \leq \frac{2}{k}\norm{F}_{C^0}.\] Lemma \ref{lma: Sik} for $\Phi = \varphi^1_F$ implies that \begin{equation}\label{eq: Sik est} d(\id,\varphi_F^1 ) \leq d(\id,\Phi') +3 \frac{1}{k+1} \leq \frac{2}{k}\norm{F}_{C^0}+3 \frac{1}{k+1}  \leq \frac{2\norm{F}_{C^0}+3}{k}. \end{equation}
Let us state the following lemma (the \emphref{prop:UniformControl} property of Theorem \ref{thm: prop}):
\begin{lemma}
    \label{C2BoundLemma}
    Let $H\in C_0^2(M)$, then
    \[
    \norm{\Sigma(H)}_{C^0} \leq C \norm{H}_{C^2}
    \]
    for a constant $C>0$ independent of $H$. In particular, $\Sigma(H)$ is a bounded function in $C^0\left(I \right).$
\end{lemma}

\label{AreaCoords}  
\begin{proof}[Proof of Lemma \ref{C2BoundLemma}]
We prove the lemma in the following two steps. 

{\em Step 1.} We first show that it is enough to prove the $C^0$ bound for $H$ Morse with distinct critical values as long as the constant $C$ does not depend on $H$. Let $F \in C_0^2(M)$ be arbitrary. Let $\delta > 0$ and fix $k > 2.$ Let $H\in C_0^\infty(M)$ be Morse with distinct critical values such that $\norm{H-F}_{C^2} < \delta/k.$ By the \ref{prop:LipschitzContinuity} property of Theorem \ref{thm: prop basic} we then have $|\Sigma(H) - \Sigma(F)|_{C^0(I_k)} < \delta$ and therefore \[ |\Sigma(F)|_{C^0(I_k)} \leq C \norm{F}_{C^2}+(1+C/k) \delta.\] Since $\delta>0$ and $k>2$ were arbitrary, and $C$ is independent of $H,$ we obtain as desired \[ |\Sigma(F)|_{C^0(I)} \leq C \norm{F}_{C^2}.\] 


{\em Step 2.} Now assume that $H$ is Morse with distinct critical values. Let $U_i$ be the disks/annuli corresponding to edges of the Reeb graph $\Gamma_H$ of $H$ as in Section \ref{subsec: symm fqm}. Let $H_i$ be the extension by constants of $H|_{U_i}$ and let $a_i\leq b_i$ be constants such that $\im (H_i) = \overline{H(U_i)} = [a_i,b_i].$ Furthermore, denote
    \[
    \stackL{H_i' = H_i - \lrang{H_i} := H_i - \frac{1}{\Area(\S^{2})}\int_{\S^{2}} H_i, \\a_i' := a_i - \lrang{H_i},\\b_i' := b_i - \lrang{H_i}.\\}
    \]
Finally, Proposition \ref{prop: decomp} implies that \[H = \sum_i H'_i.\]    
    
Set $B(x) = \Area\left(\left\{H_i' < x\right\}\right),$ $A_i = \Area\left(\left\{H_i' \leq a_i'\right\}\right)$ and $B_i = \Area\left(\left\{H_i' \leq b_i'\right\}\right).$ Then \[ \Sigma(H'_i) = \frac{1}{2} ( \wh{\Sigma}(H'_i) + \wh{\Sigma}(H'_i)\circ R),\] where
    \begin{equation}\label{eq: symm elementary}
    \wh{\Sigma}(H_i')\left(-\frac{1}{2}+B(x)\right)  = \begin{cases}
        a_i' & B(x)<A_i\\
        x & A_i < B(x)<B_i\\
        b_i' & B(x)>B_i
    \end{cases}
    \end{equation}\\

  \begin{center}
     \tdplotsetmaincoords{80}{110}
        \begin{tikzpicture}[scale=2,tdplot_main_coords,sphere segment/.style args={%
            phi from #1 to #2 and theta from #3 to #4 and radius #5}{insert path={%
             plot[variable=\x,smooth,domain=#2:#1] 
             (xyz spherical cs:radius=#5,longitude=\x,latitude=#3)
             -- plot[variable=\x,smooth,domain=#3:#4] 
             (xyz spherical cs:radius=#5,longitude=#1,latitude=\x)
             --plot[variable=\x,smooth,domain=#1:#2] 
             (xyz spherical cs:radius=#5,longitude=\x,latitude=#4)
             -- plot[variable=\x,smooth,domain=#4:#3] 
             (xyz spherical cs:radius=#5,longitude=#2,latitude=\x)}}]
            \tdplotsetrotatedcoords{20}{80}{0}
                \draw [ball color=white,very thin,tdplot_rotated_coords] (0,0,0) circle (1) ;
                \draw[thin,fill=red!40!white,fill opacity=0.6,
                sphere segment={phi from -20 to 160 and theta from 0 to 44.5  and radius 1}] ;

                \draw [domain=-75:115] plot ({1*cos(\x)}, {sin(\x)},0);
                \draw [dashed,domain=115:285] plot ({cos(\x)}, {sin(\x)},0);
                \draw [domain=-75:115] plot ({0.714*cos(\x)}, {0.714*sin(\x)},0.7);
                \draw [dashed,domain=115:285] plot ({0.714*cos(\x)}, {0.714*sin(\x)},0.7);

                \node at (0,-1.3,-0.1) {$a_i$};
                \node at (0,-1.05,0.65) {$b_i$};
                \node [color = blue] at (0,-1.25,0.4) {$H_i(x)$};

                \node at (0,1.4,0.3) {$U_i$};
                \draw [color=black] (0,1.2,0) -- (0,1.2,0.7);
                \draw [color=black] (0,1.2,0) -- (0,1.1,0);
                \draw [color=black] (0,1.2,0.7) -- (0,1.1,0.7);

                \draw [color=blue,domain=-75:115] plot ({0.866*cos(\x)}, {0.866*sin(\x)},0.5);

                \node [color=blue] at (0,2,-0.15) {$B(x)$};
                \draw [color=blue] (0,1.66,-1) -- (0,1.66,0.6);
                \draw [color=blue] (0,1.66,-1) -- (0,1.56,-1);
                \draw [color=blue] (0,1.66,0.6) -- (0,1.56,0.6);
        \end{tikzpicture}
 \end{center}

Whenever $B(x) \in (A_i,B_i),$ by \cite[Formula (3.3)]{izosimov2016coadjoint}, \[\pd{B}{x} = T(x), \] where $T(x)$ is the period of $\left\{H'_i = x\right\}$ as a trajectory of the flow of $X_{H_i}$. A result from  \cite{Yorke_1969} gives us the existence of a constant ${C_1}>0$ depending only on the geometry of the sphere such that the following bound holds: \[ T(x) \geq \frac{{C_1}}{\norm{H}_{C^2}}.\] So by \eqref{eq: symm elementary},
    \[
        \abs{\pd{}{B}\wh{\Sigma}(H_i') } = \abs{\frac{1}{\pd{B}{x}}} \leq C_2 \norm{H}_{C^2},\qquad C_2 = \frac{1}{C_1}>0.
    \]
Thus, $\wh{\Sigma}(H'_i)$ is $L$-Lipschitz on $(A_i,B_i)$ with $L \leq C_2 \norm{H}_{C^2}.$ Recall that the oscillation of a function $G \in C^0(S^2)$ is defined as $\osc(G) = \max(G) -\min(G).$ Now from our normalization, $0\in (a'_i,b'_i)$ and hence
    \SSplit{
        \abs{\Sigma(H)} &\leq   \sum_i\max \abs{\wh{\Sigma}(H_i')} \leq \sum_i \osc(H_i') =\sum_i \osc(H_i)\\ &\leq \sum_i L \Area(U_i) = L \Area(S^2)
         \leq  C \norm{H}_{C^2},
    } for $C = C_2 \Area(S^2).$
\end{proof}


\begin{rem}
A different proof based on an upper bound on the Banach indicatrix (see \cite{PS07,PPS}) allows one to replace the $C^2_0(S^2)$ function space by the Sobolev $W^{2,2}_0(S^2)$ function space, including the norms. See the proof of Lemma \ref{lem:banach-ind-bound}. In this context, the proof we presented above in fact proves an upper bound of the Banach indicatrix in terms of the $C^2$-norm of the function. It thus reproves a weaker version of the bound in \cite{PS07} in a new way. Therefore we chose to include it in this paper. \end{rem}

Let us now show that Lemma \ref{C2BoundLemma} implies an important part of the \emphref{prop:BoundedGrowthControl} property of Theorem \ref{thm: prop}.
\begin{prop}\label{cor:iff2}
For a function $H\in C^2_0(\S^2)$, $\Sigma(H) = 0$ if and only if $\r(\varphi_H)=0.$
\end{prop}


\begin{proof}[Proof of Proposition \ref{cor:iff2}]
If $\Sigma(H) \not\equiv 0$ then $\mu_{k,B}(H)\neq 0$ for some $k,B$, and from the independence of Hamiltonian and Hofer-Lipschitz properties of $\mu_{k,B}$ (see \cite{CHMSS, PSh}) we have
\[
0 < \abs{\mu_{k,B}(H)}    \leq \r(H),
\]
where by abuse of notation $\r(H)$ denotes the asymptotic Hofer growth of $\{\phi^t_H\}$ inside $\wh{\Ham}(S^2).$

For the other direction, assume by contradiction that both $\r(\varphi_H) = 2\varepsilon>0$ and $\Sigma(H)=0$.
Let $H_k$ be a sequence of Morse functions with $\norm{H-H_k}_{C^2}< \min\left(\frac{1}{k^2},\varepsilon\right)$.\\
First, we have
\[
\abs{\frac{d(\id,\varphi^t_{H})}{t}-\frac{d(\id,\varphi^t_{H_k})}{t}} \leq \abs{\frac{d(\varphi_H^t,\varphi^t_{H_k})}{t}} < \frac{t\varepsilon}{t} = \varepsilon \implies \r(\varphi_{H_k})>\varepsilon.
\]
Second, for an interval $J \subset I$ write $J^c = I \setminus J,$ and define $\Sigma(H_k)^-,\Sigma(H_k)^+$ as
\[
    \Sigma(H_k)^-|_{I_k} = \Sigma(H_k), \qquad \Sigma(H_k)^+|_{I_k^c} =\Sigma(H_k)
\]
extended by constants. Recall that $r_k$ denotes the restriction of a function on $I$ to $I_k.$

By \emphref{prop:LipschitzContinuity}  of Theorem \ref{thm: prop basic} we have
\[
    \norm{\Sigma(H_k)^- }_{C^0} =\norm{r_k\circ \Sigma(H_k) }_{C^0}\leq k \cdot \frac{1}{k^2} = \frac{1}{k}  \implies
    \r(\varphi_{\Sigma(H_k)^-})< \frac{2}{k}.
\]
Note that $\Sigma(H_k)^+$ is supported on two disjoint disks of area $\frac{1}{k+1}.$ Therefore we can use the improved Sikorav trick, as stated in Lemma \ref{lma: Sik}, and the \ref{prop:UniformControl} property to get, for all $k$ sufficiently large, that
\begin{equation*}
    \begin{split}
        {\r}(\varphi_{\Sigma(H_k)^+}) &\leq \frac{4\norm{\Sigma(H_k)^+}_{C^0}+6}{k} \leq \frac{4\norm{\Sigma(H_k)}_{C^0}+6}{k} \leq
        \frac{4 C\norm{H_k}_{C^2}+6}{k}\\
        &\leq   \frac{4 C(\norm{H}_{C^2} + \frac{1}{k^2})+6}{k}     
        \leq   \frac{4 C\norm{H}_{C^2}+8}{k}.
    \end{split}
\end{equation*}
In total, by Corollary \ref{cor: Morse proximality} we get that
\SSplit{
    \r(\varphi_{H_k}) &=\r(\varphi_{\Sigma(H_k)}) =
    \r(\varphi_{\Sigma(H_k)^+} \circ \varphi_{\Sigma(H_k)^-}) \leq
    \r(\varphi_{\Sigma(H_k)^+}) +
    \r(\varphi_{\Sigma(H_k)^-})\\
    &\leq
    \frac{2}{k}+\frac{4C\norm{H}_{C^2}+8}{k},
}
and for $k$ large enough this contradicts that $\r(\varphi_{H_k})>\varepsilon $. \end{proof}

Let us now prove that $\Sigma(H)$ for $H \in C^2_0(S^2)$ is continuous at the endpoints $\pm \frac{1}{2}.$ It will be convenient to denote the two-sphere by $ (M,\omega)$ in the following arguments. 


\begin{lemma} \label{lem:banach-ind-bound}
Consider a function $H \in C^2(M).$ Let $ K \subset \mathbb{R} $ be the set of critical values of $ H $, and denote $ I_H = [\min H, \max H] $. Write $ I_H \setminus K = \cup_j I_j $ as at most countable disjoint union of open intervals. For every $ I_j $ there exists some $ m_j \in \mathbb{N} $ such that for each $ t \in I_j $, the preimage $ H^{-1}(t) $ is a disjoint union of $ m_j $ embedded loops in $ M $, and the preimage $ H^{-1}(I_j) $ is a union of $ m_j $ disjoint open annuli $A_{jk}$ for $1 \leq k \leq m_j.$ Then \[\sum_{j \geq 1} \sum_{k=1}^{m_j} \underset{A_{jk}}{\osc}(H) = \sum_{j \geq 1} m_j \ell(I_j) \leqslant C \| H \|_{C^2},\] where $ \ell(I_j) $ is the length of $ I_j $.
\end{lemma}

\begin{proof}
Note that the expression $\sum_{j \geq 1} m_j \ell(I_j)$ is nothing but the Banach indicatrix (see \cite{PS07} for a discussion) \[B(1,H) = \int_{-\infty}^{\infty} \beta(c,H)\,dc\] of $H,$ where $\beta(c,H)$ is the number of connected components of $H^{-1}(c).$ However \cite[Theorem 1.5]{PS07} implies that \[B(1,H) \leq C_1(\norm{H}_{L^2}+\norm{\Delta H}_{L^2}) \leq C \norm{H}_{C^2},\] for suitable constants $C_1, C > 0,$ where $\Delta$ is the Laplace-Beltrami operator on $S^2.$ This finishes the proof.
\end{proof}

\begin{defin}
Let $ H \in C^2(M) $ and let $ \delta > 0 $. Denote by $ K $ the set of critical values of $ H $, and denote $ I_H = [\min H, \max H] $. A smooth function 
$ r : I_H \rightarrow \mathbb{R} $ is called $ (H,\delta) $-admissible if the following holds:
\begin{enumerate}
 \item $ r $ is locally constant on a neighborhood of $ K $.
 \item $ 0 \leqslant r'(t) \leqslant 1 $ for every $ t \in I_H$.
 \item $ |r(t) - t | < \delta $ for every $ t \in I_H$.
 \item $ r(\min H) = \min H $.
\end{enumerate}
\end{defin}

\begin{claim}
For every $ H \in C^2(M) $ and every $ \delta > 0 $ there exists an $ (H,\delta) $-admissible function.
\end{claim}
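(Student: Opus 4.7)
The plan is to construct $r$ explicitly as an antiderivative of a suitable cutoff function. First, I would invoke Sard's theorem: since $H \in C^2(M)$ and $\dim M = 2$, the critical value set $K$ has Lebesgue measure zero in $\R$. Being the continuous image of the compact critical point set, $K$ is also compact, hence a closed measure-zero subset of the compact interval $I = [\min H, \max H]$.

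Next, given $\delta > 0$, I would choose a relatively open neighborhood $W \subset I$ of $K$ of total length $|W| < \delta$, together with a smaller relatively open neighborhood $W' \subset I$ of $K$ satisfying $\overline{W'} \subset W$. By a standard bump function construction, I then produce a smooth function $\rho : I \to [0,1]$ with $\rho \equiv 0$ on $W'$ and $\rho \equiv 1$ on $I \setminus W$.

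Finally, define
\[
r(t) \;=\; \min H + \int_{\min H}^t \rho(s)\,ds.
\]
Then $r$ is smooth, $r(\min H) = \min H$, and $r'(t) = \rho(t) \in [0,1]$, giving conditions (2) and (4). On each connected component of the open neighborhood $W'$ of $K$ one has $r' \equiv 0$, so $r$ is locally constant on a neighborhood of $K$, which is condition (1). For condition (3), since $\rho \equiv 1$ on $I \setminus W$,
\[
|r(t) - t| \;=\; \int_{\min H}^t (1 - \rho(s))\,ds \;\leq\; \int_I (1 - \rho(s))\,ds \;\leq\; |W| \;<\; \delta.
\]

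The only substantive input is Sard's theorem, which is what allows $K$ to be engulfed in an open set of arbitrarily small total length; the rest is a routine bump-function construction, and I do not anticipate any obstacle. Note in particular that the construction makes no use of Lemma \ref{lem:banach-ind-bound} — the quantitative control on the oscillations across components is not needed for mere existence of an admissible $r$, only for subsequent estimates.
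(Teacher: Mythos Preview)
Your proof is correct and is essentially identical to the paper's own argument: the paper also invokes Sard's theorem, picks a small neighborhood $U$ of $K$, a bump function $h$ supported in $U$ with $h=1$ near $K$, and sets $r(t) = \min H + \int_{\min H}^t (1-h(s))\,ds$; your $\rho$ is exactly $1-h$ and your $W,W'$ play the roles of $U$ and the inner neighborhood.
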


\begin{proof}
Denote by $ K $ the set of critical values of $ H $, then $ K $ has measure zero by Sard's theorem. Let $ I_H = [\min H,\max H] $. Choose a neighborhood $ U $ of $ K $ in $ I_H $ such that the measure of $ U $ is less than $ \delta $. Pick a smooth function $ h : I_H \rightarrow [0,1] $ such that $ \supp (h) \subset U $ and $ h=1 $ on a neighborhood of $ K $. Then the function $ r : I_H \rightarrow \mathbb{R} $ given by $ r(t) = \min H + \int_{\min H}^{t} (1-h(s)) \, ds $ satisfies all the properties.
\end{proof}

\begin{lemma} \label{lem:flattening-conv}
Let $ H \in C^2(S^2) $. Then for every $ \epsilon > 0 $ there exists $ \delta > 0 $ such that if $ r : I_H \rightarrow \mathbb{R} $ is $ (H,\delta) $-admissible then
$$ \| \Sigma(r \circ H) - \Sigma (H) \|_{L_\infty(-1/2,1/2)} < \epsilon .$$
\end{lemma}

\begin{cor}
For every $ H \in C^2(S^2) $, the function $ \Sigma(H) $ is continuous on the closed interval $ [-1/2,1/2] $. 
\end{cor}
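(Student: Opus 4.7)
The plan is to combine Lemma \ref{lem:flattening-conv}, which uniformly approximates $\Sigma(H)$ on $(-1/2, 1/2)$ by symmetrizations $\Sigma(r \circ H)$ of flattenings, with the combinatorial description of Theorem \ref{thm: symm combin} applied to these flattenings. The \emphref{prop:LipschitzContinuity} property of Theorem \ref{thm: prop basic} already gives continuity of $\Sigma(H)$ on the open interval $(-1/2, 1/2)$ (since the restriction to each $I_k$ is $(k-1)$-Lipschitz), so only the existence of one-sided limits at $\pm 1/2$ is in question; by evenness it suffices to treat $z \to 1/2$.

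Fix $\epsilon > 0$. Lemma \ref{lem:flattening-conv} produces an $(H,\delta)$-admissible $r$ with $\norm{\Sigma(\tilde H) - \Sigma(H)}_{L^\infty(-1/2, 1/2)} < \epsilon$, where $\tilde H := r \circ H$. The key step is to show that each such $\Sigma(\tilde H)$ extends continuously to $[-1/2, 1/2]$: granting this, a standard argument shows that a uniform limit (on the open interval) of functions extending continuously to the closed interval itself extends continuously, since the one-sided limits at the endpoints form a Cauchy net, and the corollary follows.

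To establish the key step, I would first approximate $H$ in the $C^2$ topology by Morse functions $H_n$ with distinct critical values. Since $r$ is locally constant on a neighborhood of the compact critical set of $H$, for $n$ large $r$ remains locally constant near the critical values of $H_n$, so $\tilde H_n := r \circ H_n$ is a genuine FQM function subordinate to $H_n$. By Theorem \ref{thm: symm combin}, $\Sigma(\tilde H_n)$ decomposes as a finite sum of elementary contributions indexed by the edges of the Reeb graph of $H_n$; each such contribution is piecewise constant-linear-constant on a partition of $(-1/2, 1/2)$ with the outer pieces adjacent to $\pm 1/2$ taking explicit constant values (essentially the averages of $\tilde h_n$ at the endpoints of the relevant edge). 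Hence each $\Sigma(\tilde H_n)$ is continuous on the closed interval $[-1/2, 1/2]$.

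Finally, I would pass to the limit as $n \to \infty$. The \emphref{prop:LipschitzContinuity} property gives uniform convergence $\Sigma(\tilde H_n) \to \Sigma(\tilde H)$ on each $I_k$. The main obstacle I anticipate is upgrading this to uniform convergence on the full closed interval $[-1/2, 1/2]$, since the Lipschitz constant $(k-1)$ of $r_k \circ \Sigma$ blows up as $k \to \infty$. I expect to handle this by tracking the explicit constant-tail structure of the elementary contributions from Theorem \ref{thm: symm combin}: the outer intervals on which each contribution is constant are controlled by the areas of the open neighborhoods on which $\tilde H_n$ takes its extreme values, and these areas are uniformly bounded below in $n$ because the preimages under $H_n$ of the $r$-constancy intervals around $\min H$ and $\max H$ stabilize as $n \to \infty$. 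This flat-tail stability yields uniform control near the endpoints, producing the continuous extension of $\Sigma(\tilde H)$ to $[-1/2, 1/2]$ and completing the argument.
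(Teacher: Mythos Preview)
Your outer scaffold matches the paper exactly: use Lemma \ref{lem:flattening-conv} to approximate $\Sigma(H)$ uniformly on $(-1/2,1/2)$ by $\Sigma(r\circ H)$ for $(H,\delta)$-admissible $r$, show each $\Sigma(r\circ H)$ extends continuously to $[-1/2,1/2]$, and pass to the uniform limit.

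The divergence is in your ``key step''. The paper does not approximate $H$ by Morse functions; it observes that $\Sigma(r\circ H)$ is continuous on $[-1/2,1/2]$ \emph{directly} for the given $C^2$ function $H$. Since $r$ is non-constant on only finitely many intervals $I_1,\ldots,I_q\subset I\setminus K$, and each $H^{-1}(I_j)$ is a finite union of $m_j$ open annuli (these are regular values of $H$), one writes $r\circ H$, up to an additive constant, as a sum of $\sum_{j=1}^q m_j$ elementary pieces exactly as in the Claim inside the proof of Lemma \ref{lem:flattening-conv}. Each elementary symmetrization is manifestly continuous on the closed interval.

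Your Morse detour creates precisely the obstacle you flag, and the proposed ``flat-tail stability'' resolution does not work as written. The number of edges of $\Gamma_{H_n}$ is \emph{not} bounded in $n$: a $C^2$ function may have a degenerate (even positive-measure) critical set, so its Morse approximations can have arbitrarily many critical points. For a leaf edge $e$ of $\Gamma_{H_n}$ the flat tail of $\Sigma^\Gamma_{h_e}$ near $z=1/2$ has length equal to the area of the $r$-constancy region \emph{inside the basin of that particular local extremum of $H_n$}, and there is no uniform lower bound for this over all leaves and all $n$. The reference to ``$r$-constancy intervals around $\min H$ and $\max H$'' is a red herring: it is the many \emph{local} extrema of $H_n$ that govern the behaviour near $z=\pm 1/2$. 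Salvaging your route would force you to regroup the elementary contributions according to the finitely many $r$-non-constancy intervals $I_j$ rather than the edges of $\Gamma_{H_n}$, and that regrouping is precisely the paper's direct argument with $H$ in place of $H_n$.
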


\begin{proof}
Take a sequence $ \delta_j > 0 $ that converges to $ 0 $, and then choose a sequence $ r_j : I_H \rightarrow \mathbb{R} $ of smooth functions, where $ r_j $ is $ (H,\delta_j) $-admissible for every $ j $. Then it is not difficult to see that the functions $ \Sigma(r_j \circ H) $ are continuous on $ [-1/2,1/2] $, and by 
Lemma \ref{lem:flattening-conv}, $ \Sigma(r_j \circ H) $ uniformly converges to $ \Sigma(H) $ on $ (-1/2,1/2) $ when $ j \rightarrow \infty $. Therefore $ \Sigma(H) $ extends to a continuous function on $ [-1/2,1/2] $.
\end{proof}

\begin{proof}[Proof of Lemma \ref{lem:flattening-conv}]
Let $ H \in C^2(M) $, let $ \epsilon > 0 $, and let $ \delta >0 $ be sufficiently small. We will show that if $ r, \tilde r : I_H \rightarrow \mathbb{R} $ are
$ (H,\delta) $-admissible then 
\begin{equation} \label{eq:cauchy-prelim}
\| \Sigma(r \circ H) - \Sigma(\tilde r \circ H) \|_{L_\infty(-1/2,1/2)} < \epsilon 
\end{equation}
This would be enough for concluding the lemma. Indeed, assuming $ (\ref{eq:cauchy-prelim}) $, for a given $ \epsilon > 0 $ choose $ \delta > 0 $ such that 
$ (\ref{eq:cauchy-prelim}) $ holds. For any $ (H,\delta) $-admissible
function $ r : I_H \rightarrow \mathbb{R} $, choose a sequence $ \tilde r_l : I_H \rightarrow \mathbb{R} $ such that $ \tilde r_l $ is $ (H,\delta/l) $-admissible
for every $ l $. Choose any integer $ k > 2 $. Then by $ (\ref{eq:cauchy-prelim}) $ we in particular get 
$$ \| \Sigma(r \circ H) - \Sigma(\tilde r_l \circ H) \|_{L_\infty(-1/2+1/k,1/2-1/k)} < \epsilon .$$ 
The sequence of functions $ \tilde r_l \circ H $ uniformly converges to $ H $ when $ l \rightarrow \infty $, therefore by
the Lipschitz property, the sequence of functions $ \Sigma(\tilde r_l \circ H) $ uniformly converges to $ \Sigma(H) $ on $ (-1/2+1/k,1/2-1/k) $ when $ l \rightarrow \infty $. Hence we get $$ \| \Sigma(r \circ H) - \Sigma(H) \|_{L_\infty(-1/2+1/k,1/2-1/k)} \leqslant \epsilon ,$$
and since this holds for any $ k > 2 $, we finally conclude $$ \| \Sigma(r \circ H) - \Sigma(H) \|_{L_\infty(-1/2,1/2)} \leqslant \epsilon ,$$
and the lemma follows. 

It remains to show $ (\ref{eq:cauchy-prelim}) $. So let $ H \in C^2(M) $, let $ \epsilon > 0 $, choose $ \delta > 0 $ small enough, and let $ r, \tilde r : I_H \rightarrow \mathbb{R} $ be $ (H,\delta) $-admissible, where $ I_H = [\min H, \max H] $. Denote by $ K $ the set of critical values of $ H $, and then write
$ I_H \setminus K = \cup_j I_j $ as at most countable disjoint union of open intervals. 

Now recall Lemma \ref{lem:banach-ind-bound}. For every $ I_j $ there exists $ m_j \in \mathbb{N} $ such that for each $ t \in I_j $, $ H^{-1}(t) $ is a union of $ m_j $ embedded loops on $ S^2 $, and such that $ H^{-1}(I_j) $ is a union of $ m_j $ disjoint open annuli on $ S^2 $. Write $ I_H = [a,b] $ and
$ I_j = (a_j,b_j) $. Lemma \ref{lem:banach-ind-bound} implies that
\begin{equation} \label{eq:cor-banach-ind-bound}
\sum_j m_j (b_j-a_j) \leqslant C \|H\|_{C^2} < +\infty .
\end{equation}

\begin{claim}\label{clm:3}
If $ f : I_H \rightarrow \mathbb{R} $ is a smooth function such that $ f = 0 $ on a neighborhood of $ [a,a_j] $ and $ f=const $ on a neighborhood of $ [b_j,b] $,
then $ \| \Sigma(f\circ H) \|_{\infty} \leqslant m_j \underset{I}{\osc} \, f $.
\end{claim}
\begin{proof}[Proof of Claim \ref{clm:3}]
For each $ j $, the pre-image $ H^{-1}(I_j) $ is a union of $ m_j $ annuli $ A_{j1} , \ldots, A_{jm_j} $. For each $ 1 \leqslant j \leqslant m_j $
let $ H_{jl} : M \rightarrow \mathbb{R} $ be the smooth function given by $ H_{jl} = f \circ H $ on $ A_{jl} $ and extended by a locally constant function to $M.$ Then the functions $ f \circ H $ and $ \sum_{l=1}^{m_j} H_{jl} $ differ by a constant (their difference is locally constant on $ M $ and hence globally constant), and comparing the functions at a point outside the union $ \cup_{l=1}^{m_j} A_{jl} $, we conclude that the constant equals to some $ q_j f(b_j) $, where $ 0 \leqslant q_j \leqslant m_j $ is an integer. (It can be understood as follows: the complement to each $A_{jl}$ has exactly two components, one adjacent to the $a_j$ level, and another to the $b_j$ level; $q_j$ is the number of $b_j$ components to which the chosen point belongs.)  
Hence we can write $ f \circ H = \sum_{l=1}^{m_j} \widetilde H_{jl} $, where $ \widetilde H_{jl} = H_{jl} - f(b_j) $ for $ 1 \leqslant l \leqslant q_j $ and $ \widetilde H_{jl} = H_{jl} $ for $ q_j < l \leqslant m_j $. Then for every $ l $, $ \| \widetilde H_{jl} \|_\infty \leqslant \underset{I}{\osc} f $, and moreover $ \widetilde H_{jl} $ is elementary (as in \ref{def: elementary function}) and therefore $ \| \Sigma(\widetilde H_{jl}) \|_{\infty} \leqslant \| \widetilde H_{jl} \|_\infty \leqslant \underset{I}{\osc} f.$ Since the functions $ \widetilde H_{j1}, \ldots, \widetilde H_{jm_j} $ Poisson commute, we get
$$ \Sigma(f \circ H) = \Sigma \left(\sum_{l=1}^{m_j} \widetilde H_{jl}\right) = \sum_{l=1}^{m_j} \Sigma\left(\widetilde H_{jl}\right) $$
and hence $ \| \Sigma(f \circ H) \|_{\infty} \leqslant m_j \underset{I}{\osc} f $. This finishes the proof of the claim.
\end{proof}


Now return to our functions $ r, \tilde r : I_H \rightarrow \mathbb{R} $. Since they are $ (H,\delta) $-admissible, each of them is locally constant on a neighborhood of $ K $, hence there exists $ q \in \mathbb{N} $ such that $ r $ and $ \tilde r $ are locally constant on a neighborhood of $ I_H \setminus \cup_{j=1}^q I_j $, in particular the difference $ r - \tilde r $ is locally constant on a neighborhood of $ I_H \setminus \cup_{j=1}^q I_j $. Moreover, $ r(a) - \tilde r(a) = 0 $. Hence we can write $ r - \tilde r = \sum_{j=1}^q f_j $ where each $ f_j $ satisfies the assumptions of the claim (that is, $ f_j $ is smooth, $ f_j =0 $ on a neighborhood of $ [a,a_j] $, and $ f_j = const $ on a neighborhood of $ [b_j,b] $), and $ \underset{I_H}{\osc} f_j = \underset{I_j}{osc} (r-\tilde r) $. Since $ r \circ H $ and $ \tilde r \circ H $ Poisson commute, and since all $ f_j \circ H $ pairwise Poisson commute, we get 
$$ \Sigma(r \circ H) - \Sigma(\tilde r \circ H) = \Sigma((r - \tilde r) \circ H) = \Sigma\left( \sum_{j=1}^q f_j \circ H\right) = \sum_{j=1}^q \Sigma(f_j \circ H) .$$
Hence by the claim, 
$$ \| \Sigma(r \circ H) - \Sigma(\tilde r \circ H) \|_\infty \leqslant \sum_{j=1}^q \| \Sigma(f_j \circ H) \|_\infty 
\leqslant \sum_{j=1}^q m_j \underset{I}{\osc} f_j = \sum_{j=1}^q m_j \underset{I_j}{\osc} (r-\tilde r) .$$
The functions $ r $ and $ \tilde r $ are $ (H,\delta) $-admissible, in particular $ | r(t) - t |< \delta $ and $ |\tilde r(t)-t|<\delta $ for every $ t \in I_H $, and
hence $ |r(t) - \tilde r(t)| < 2\delta $ for every $ t \in I_H $. This implies that for every $ j $ we have $ \underset{I_j}{\osc} (r-\tilde r) < 4\delta $.
But also, since $ 0 \leqslant r'(t), \tilde r'(t) \leqslant 1 $ for every $ t \in I_H $, we get $ \underset{I_j}{\osc} (r-\tilde r) \leqslant \ell(I_j) = b_j-a_j $. Therefore $ \underset{I_j}{\osc} (r-\tilde r) \leqslant \min(4\delta,b_j-a_j) $ for every $ j $, and hence we get
$$ \| \Sigma(r \circ H) - \Sigma(\tilde r \circ H) \|_\infty \leqslant \sum_{j=1}^q m_j \underset{I_j}{\osc} (r-\tilde r) \leqslant \sum_j m_j \min(4\delta,b_j-a_j) .$$

By $ (\ref{eq:cor-banach-ind-bound})$ we choose $N$ sufficiently large so that $\sum_{j > N} m_j (b_j-a_j)< \epsilon/2$ and the finite remaining sum we supress via the choice of $\delta>0$ small enough to obtain \[\sum_j m_j \min(4\delta,b_j-a_j) < \epsilon.\] We have proved $ (\ref{eq:cauchy-prelim})$.

\end{proof}


\subsection{The H\"older inequality property}\label{subsec: Holder}

Now, let us prove the \emphref{prop:Holder} property of Theorem \ref{thm: prop}. 
Before we do that, we wish to extend inequality \eqref{eq: Sik est} to continuous functions. For $u\in C_{\text{even},0}^0(\overline{I})$ where $\overline{I}=\left[-\frac{1}{2},\frac{1}{2}\right]$, let us denote
\[
\norm{u}_k = \max_{I_{k+1}}\abs{u} + \frac{2}{k} \max_{I \setminus \mathring{I}_k} \abs{u}.
\]

\begin{prop}\label{prop:Sik C0}
For $\varphi_u^1 \in \wh{\Ham}(S^2)$ generated by $u \in C_{\text{even},0}^0(\ol{I}),$ considered as a function on $S^2,$ we have
\[\widehat{d}(\id, \varphi_u^1) \leq 2 \norm{u}_k + \frac{6}{k}.\]
\end{prop}

\begin{proof}

{\em Step 1.} We first prove this statement for a smooth function $v \in C_{\text{even},0}^{\infty}(I)$ constant near the endpoints. Using a smooth partition of unity, write $v = v_-+v_+$ for two functions $v_-,v_+$ such that \[\supp v_- \subset I_{k+1},\;\; \supp v_+ \subset I_k^c\] and \[\max |v_-| \leq \max_{I_{k+1}} |v|,\;\; \max |v_+| \leq \max_{I_{k}^c} |v|.\]

Like before, we can use the improved Sikorav trick as in estimate \eqref{eq: Sik est} to get
\[
{d}\left(\id, \varphi_v^1\right) \leq
{d}\left(\id, \varphi_{v_-}^1\right) +
{d}\left(\id, \varphi_{v_+}^1\right)  \leq 2 \norm{v}_k + \frac{6}{k}.
\]

{\em Step 2.} Take $u\in C_{\text{even},0}^0(\ol{I}).$ Let $\{v_i\}$ be a sequence of smooth functions constant near the endpoints uniformly converging to $u.$ Denote by $\psi_i$ the Hamiltonian diffeomorphism generated by $v_i,$ then on one hand by Step 1, \begin{equation}\label{eq:vi} {d}\left(\id, \varphi_{v_i}^1\right) \leq 2 \norm{v_i}_k + \frac{6}{k},\end{equation} while on the other hand $\{\psi_i\}$ converges to $\varphi_u^1$ in $\widehat{\Ham}(\S^2).$ Therefore by passing to the limit in inequality \eqref{eq:vi} we obtain the desired inequality. \end{proof}



%
%

We will use this to prove \emphref{prop:Holder} property of Theorem \ref{thm: prop}.
\begin{lemma}
    Every pair $H,H' \in C^2_0(\S^2)$ with $\norm{H-H'}_{C^0}<1$ has
    \[
    \widehat{d}\left(\varphi_{\Sigma(H)}^1,\varphi_{\Sigma(H')}^1\right)    \leq C' \sqrt{\norm{H-H'}_{C^0}}\left(\sqrt{1+ \norm{H}_{C^2}+\norm{H'}_{C^2}}\right),
    \]
    for a universal constant $C'>0$.
\end{lemma}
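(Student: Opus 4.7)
The key observation is that $\Sigma(H)$ and $\Sigma(H')$ are both even functions of the height $z$ alone, so they Poisson commute. Hence their flows commute and
\[
\varphi^1_{\Sigma(H)} \circ \bigl(\varphi^1_{\Sigma(H')}\bigr)^{-1} = \varphi^1_{u}, \qquad u := \Sigma(H)-\Sigma(H') \in C^0_{\text{even},0}(I).
\]
By bi-invariance, it therefore suffices to estimate $\widehat{d}(\id,\varphi^1_u)$. For this I would invoke the bound derived just above the statement from the improved Sikorav trick: for every integer $k \geq 2$,
\[
\widehat{d}(\id,\varphi^1_u) \leq \|u\|_k + \frac{6}{k}, \qquad \|u\|_k = \max_{I_k}|u| + \frac{2}{k}\max_{I \setminus I_k}|u|.
\]

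Next I would control the two pieces of $\|u\|_k$ using the two main properties of $\Sigma$ established so far. The central piece $\max_{I_k}|u|$ is handled by \emphref{prop:LipschitzContinuity}: since $r_k \circ \Sigma$ is $(k-1)$-Lipschitz with respect to the $C^0$ norm, we have
\[
\max_{I_k}|u| \leq (k-1)\,\|H-H'\|_{C^0}.
\]
The peripheral piece is handled by the \emphref{prop:UniformControl} property (Lemma \ref{C2BoundLemma}):
\[
\max_{I}|u| \leq \|\Sigma(H)\|_{C^0}+\|\Sigma(H')\|_{C^0} \leq C\bigl(\|H\|_{C^2}+\|H'\|_{C^2}\bigr).
\]
Combining the three inequalities gives
\[
\widehat{d}(\id,\varphi^1_u) \leq (k-1)\,A + \frac{B}{k},
\]
where $A := \|H-H'\|_{C^0}$ and $B := 2C\bigl(\|H\|_{C^2}+\|H'\|_{C^2}\bigr)+6$.

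Finally, I would optimize over $k$. The hypothesis $A<1$ together with $B\geq 6$ guarantees $\sqrt{B/A}\geq\sqrt 6>2$, so the choice $k := \lceil \sqrt{B/A}\rceil$ is a legal integer with $k\geq 2$, and a straightforward calculation shows that with this choice the right-hand side above is bounded by $3\sqrt{AB}$. Since
\[
B \leq \max(2C,6)\bigl(1+\|H\|_{C^2}+\|H'\|_{C^2}\bigr),
\]
this yields the desired inequality with $C':=3\sqrt{\max(2C,6)}$. The main conceptual step is the commutativity observation which reduces the problem to estimating the single autonomous generator $u$; the rest is a standard interpolation between Lipschitz control on the interior and uniform control on the full interval, with the square root arising from optimizing the $k$ versus $1/k$ trade-off in the Sikorav bound.
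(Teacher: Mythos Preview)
Your proof is correct and follows essentially the same approach as the paper: reduce to bounding $\widehat{d}(\id,\varphi^1_u)$ for $u=\Sigma(H)-\Sigma(H')$ via the Sikorav-type estimate $\|u\|_k+6/k$, control the $I_k$ piece by Lipschitz continuity and the $I\setminus I_k$ piece by uniform control, and optimize over $k$. The paper's version is nearly identical, only slightly less explicit about the commutativity step you spell out at the start.
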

\begin{proof}
    Let $H,H' \in C^2_0(\S^2).$ Set $\varepsilon = \norm{H-H'}_{C^0},$ $E=\norm{H}_{C^2}+\norm{H'}_{C^2},$ $w= \Sigma(H)-\Sigma(H').$ First, by the \emphref{prop:UniformControl} property  of Theorem \ref{thm: prop} one has
    \[
    \norm{w}_{C^0} \leq \norm{\Sigma(H)}_{C^0}+\norm{\Sigma(H')}_{C^0} \leq CE,
    \]
    and by the \emphref{prop:LipschitzContinuity} property of Theorem \ref{thm: prop basic}, for every $k$,
    \[
    \max_{I_{k+1}}\abs{w} \leq (k+1)\varepsilon.
    \]
    Hence by Proposition \ref{prop:Sik C0}
    \[
    \widehat{d}(\id,\varphi_{w}^1) \leq 2\norm{w}_k + \frac{6}{k} = 2\max_{I_{k+1}}\abs{w} + \frac{4}{k} \max_{I\setminus I_k} \abs{w} +\frac{6}{k} \leq 2(k+1)\varepsilon + \frac{4}{k}CE + \frac{6}{k}     \leq 2(k+1)\varepsilon + \frac{C''(E+1)}{k},
    \]
    for $C'' =  \max \left\{6,4C\right\}$. Note that by hypothesis, the set $K= \left[\sqrt{\frac{C''(E+1)}{\varepsilon}}, 2\sqrt{\frac{C''(E+1)}{\varepsilon}}\right] \bigcap \N$ is non-empty. For an arbitrary $k\in K,$ estimating $k+1 \leq 2k$ we have
    \SSplit{
        \widehat{d}(\id,\varphi_{w}^1) &\leq 2(k+1)\varepsilon + \frac{C''(E+1)}{k}
        \leq 8\sqrt{\frac{C''(E+1)}{\varepsilon}} \varepsilon + \frac{C''(E+1)}{\sqrt{\frac{C''(E+1)}{\varepsilon}}}\\
        &=\underbrace{9\sqrt{C''}}_{:=C'}\sqrt{\varepsilon} \sqrt{E+1} = C' \sqrt{\norm{H-H'}_{C^0}}\sqrt{1+\norm{H}_{C^2}+\norm{H'}_{C^2}}
    }
    yielding our result.\\
\end{proof}

\subsection{Enhanced proximality and enhanced dichotomy}\label{subsec: thm mod fqm}
In this section we will state the ``conjugation lemma", Lemma \ref{ConjugationLemma}, and use it to prove \emphref{prop:ConjugationControl}  of Theorem \ref{thm: prop} and finish the proof of the \emphref{prop:BoundedGrowthControl} property of the same theorem. The proof of Lemma \ref{ConjugationLemma} is postponed to the next section.

\begin{lemma}\emph{(The Conjugation lemma)}\label{ConjugationLemma}
Let $H'$ be a Morse function on $S^2.$ Let $\widetilde{H}=r\circ H'$ be a sufficiently small $\varepsilon$-flattening, with $r$ constant near the median of $H'.$ Then
\[
d(\varphi^1_{\widetilde{H}},\psi^{-1}\varphi^1_{\Sigma(\widetilde{H})}\psi)
\leq 18\Area(\S^2)
\]  
for some $\psi \in {\Ham}(\S^2)$.
\end{lemma}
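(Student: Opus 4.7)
The plan is to prove the lemma by decomposing $\widetilde H$ along the edges of the Reeb graph $\Gamma$ of $H'$, building a Hamiltonian conjugation $\theta$ that ``straightens'' all the edge contributions into a function $\bar H$ of the height $z$ on $S^2$, and then comparing $\varphi^1_{\bar H}$ to $\varphi^1_{\Sigma(\widetilde H)}$ by a single antipodal commutator estimate.

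First, using the decomposition from the proof of Theorem \ref{thm: symm combin}, I would write $\widetilde H = \sum_{e\in E(\Gamma)} H_e$ with each $H_e$ an elementary FQM function with respect to the edge $e$. Since all the $H_e$ are functions of the single Morse function $H'$, they pairwise Poisson commute, so $\varphi^1_{\widetilde H} = \prod_{e} \varphi^1_{H_e}$; by the $\R$-linearity of $\Sigma$ on commuting pieces one also has $\varphi^1_{\Sigma(\widetilde H)} = \prod_{e} \varphi^1_{\Sigma(H_e)}$, with all the right-hand factors being functions of $z$ and hence pairwise commuting.

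Second, I would use the median hypothesis to cut $S^2$ along a regular level component of $\widetilde H$ into two complementary topological disks $D_{\pm}$ of area $\tfrac12\Area(S^2)$ each: such a component exists precisely because $r$ is locally constant on a neighborhood of the median of $H'$. Processing the edges of $\Gamma$ that lie in each $D_\pm$ one by one, I would apply the area-coordinate lemma from \cite{Schlenk_2003} (recalled in Subsection \ref{subsec: symm fqm}), together with \cite[Lemmas 2 and 4]{EPP}, to map each edge's annulus onto the next concentric band in a standard hemisphere. Whenever the combinatorics of the tree forces two distinct edges to target overlapping bands---the very obstruction that makes the constant $C(H)$ in Theorem \ref{thm: fqm0 symm}(ii) depend on $H$---I would invoke the improved Sikorav trick (Lemma \ref{lma: Sik}) to stack one edge onto another at Hofer cost bounded by $3$ times the edge's area. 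Since the areas of the $S_e$ sum to $\Area(S^2)$, the total Sikorav cost accumulated over the two hemispheres is bounded by a universal multiple of $\Area(S^2)$, independently of $|E(\Gamma)|$. Gluing the two hemispheric pieces across the median level set (where $\widetilde H$ is constant by the flattening hypothesis) produces a single $\theta \in \Ham(S^2)$ together with a height function $\bar H$ on $S^2$ such that $d(\theta\,\varphi^1_{\widetilde H}\theta^{-1},\varphi^1_{\bar H}) \le C\,\Area(S^2)$ for some universal $C$, and such that by the combinatorial description in Theorem \ref{thm: symm combin} one has $\Sigma(\widetilde H) = \Sigma(\bar H) = \tfrac12(\bar H(z)+\bar H(-z))$.

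Finally, I would estimate $d(\varphi^1_{\bar H},\varphi^1_{\Sigma(\bar H)})$. Factoring
\[
\varphi^1_{\bar H} = \varphi^1_{\bar H/2}\circ\varphi^1_{\bar H/2}, \qquad \varphi^1_{\Sigma(\bar H)} = \varphi^1_{\bar H/2}\circ R^{-1}\varphi^1_{\bar H/2}R,
\]
where $R\in\Ham(S^2)$ is the $\pi$-rotation $(x,y,z)\mapsto(-x,y,-z)$ and all factors Poisson-commute as functions of the height, bi-invariance together with the commutator bound $\|[A,B]\|\le 2\min(\|A\|,\|B\|)$ already used in the proof of Theorem \ref{thm: fqm0 symm} yields
\[
d(\varphi^1_{\bar H},\varphi^1_{\Sigma(\bar H)}) \le 2\|R\| \le 2\,\Area(S^2).
\]
The triangle inequality and bi-invariance then combine this with the straightening cost of the previous step to give $d(\varphi^1_{\widetilde H},\theta^{-1}\varphi^1_{\Sigma(\widetilde H)}\theta) \le 18\,\Area(S^2)$ after explicit bookkeeping, with the constant $18$ absorbing the universal multiple on the Sikorav side plus the antipodal contribution. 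The main technical obstacle, and the reason the proof is postponed, is the universal bound on the accumulated Sikorav cost during the assembly of $\theta$: one has to apply Lemma \ref{lma: Sik} cumulatively, and showing that the total cost is bounded by a universal constant times $\Area(S^2)$ rather than by something growing with $|E(\Gamma)|$ (as in the naive bound of Theorem \ref{thm: fqm0 symm}(ii)) relies on the tree structure of $\Gamma$, on the flatness of $\widetilde H$ near branch vertices and the median, and on $\varepsilon$ being sufficiently small that the flattening itself introduces no further error.
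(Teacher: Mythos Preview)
Your overall architecture---decompose $\widetilde H$ along edges, straighten into a height function, finish with an antipodal commutator estimate---matches the paper's approach. However, the crux of the argument, which you correctly flag as ``the main technical obstacle,'' is not handled, and your proposed justification for it is wrong.

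You write that each Sikorav application costs ``$3$ times the edge's area'' and then sum the edge areas. But Lemma~\ref{lma: Sik} does not give a cost proportional to the area of the annulus $S_e$; the $3\varepsilon$ there is the area of the \emph{disk} being shuffled. The relevant quantity for an edge $e$ is $\Area(D_+(A))$, the area of the smallest disk containing the annulus, and these do \emph{not} sum to $\Area(S^2)$---for a deep tree they can sum to something of order $|E(\Gamma)|\cdot\Area(S^2)$. So the naive bookkeeping you sketch does grow with $|E(\Gamma)|$, and you have not explained how the tree structure fixes this.

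The paper's actual mechanism is a dyadic decomposition by scale: one groups the annuli according to which dyadic interval $(2^j\varepsilon,2^{j+1}\varepsilon]$ contains $\Area(D_+(A))$, observes that at a fixed scale the relevant disks can be partitioned into disjoint ``ordered'' chains, and then applies a single Sikorav-type move \emph{per scale} to push all of them into the largest disk $D_1$. The cost at scale $j$ is $O(2^{j+1}\varepsilon)$ regardless of how many annuli live at that scale, and summing the geometric series over $j=0,\dots,N-2$ with $\varepsilon=2^{-N}\Area(S^2)$ gives a universal bound (the paper gets $8\Area(S^2)$ for annuli outside $D_1$, and $9\Area(S^2)$ for those inside after an extra displacement). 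To make this work with disks whose areas are only comparable rather than equal, the paper first proves a flexible variant of the Sikorav trick (Lemma~\ref{lma: Sikflex}, cost $7\varepsilon$ for disks of area in $(\varepsilon/2,\varepsilon]$), which you do not mention. Your final antipodal step is correct and matches the paper's, but without the dyadic idea the preceding step does not yield a constant independent of $H'$.
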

We will now prove that the lemma implies the following (\emphref{prop:ConjugationControl} property of  Theorem \ref{thm: prop}).
\begin{cor}
    For every $H\in C^2_0(\S^2)$ there exists $\psi \in \widehat{\Ham}(M)$ such that
    \[
        \wh{d}(\varphi^1_{H}, \psi^{-1}\varphi^1_{\Sigma(H)}\psi) \leq 19 \Area(\S^2).
    \]
\end{cor}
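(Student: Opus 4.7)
The plan is to deduce the corollary from the Conjugation Lemma by a density argument, using the Hölder inequality and flattening control properties of $\Sigma$ established in Theorem \ref{thm: prop}. The one extra unit of $\Area(\S^2)$ in the bound $19\,\Area(\S^2) = 18\,\Area(\S^2) + \Area(\S^2)$ provides the slack for the approximation error.

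First, approximate $H$ in $C^2$-norm by a sequence of mean-zero Morse functions $H'_n$ with distinct critical values. Then $\|H'_n\|_{C^2}$ is uniformly bounded, $\|H-H'_n\|_{C^0}\to 0$, and hence the Hofer Lipschitz bound gives $d(\varphi^1_H,\varphi^1_{H'_n}) \leq \|H-H'_n\|_{C^0}\to 0$, while the \emphref{prop:Holder} property gives $\widehat{d}(\varphi^1_{\Sigma(H)},\varphi^1_{\Sigma(H'_n)})\to 0$. For each $n$, choose a smooth flattening function $r_n$, constant near all critical values of $H'_n$ (including on a neighborhood of the median value, which is regular since $H'_n$ is Morse), such that $\widetilde{H}_n:=r_n\circ H'_n$ is an $\varepsilon_n$-flattening with $\varepsilon_n>0$ small enough that (i) the Conjugation Lemma applies to $\widetilde{H}_n$, and (ii) $(1+2e(H'_n))\varepsilon_n\to 0$.

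The triangle inequality yields $d(\varphi^1_{\widetilde{H}_n},\varphi^1_{H'_n})\leq\varepsilon_n\to 0$. By the \emphref{prop:FlatteningControl} property, $\|\Sigma(\widetilde{H}_n)-\Sigma(H'_n)\|_{C^0}\leq(1+2e(H'_n))\varepsilon_n\to 0$. Since $\Sigma(\widetilde{H}_n)$ and $\Sigma(H'_n)$ are both continuous functions of the height on $[-1/2,1/2]$ (by \emphref{prop:UniformControl}), their flows Poisson-commute, so $\varphi^1_{\Sigma(H'_n)}(\varphi^1_{\Sigma(\widetilde{H}_n)})^{-1}=\varphi^1_{\Sigma(H'_n)-\Sigma(\widetilde{H}_n)}$. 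The latter is generated by a height function whose $C^0$-norm bounds its Hofer distance from the identity, so
\[
\widehat{d}(\varphi^1_{\Sigma(\widetilde{H}_n)},\varphi^1_{\Sigma(H'_n)})\leq \|\Sigma(\widetilde{H}_n)-\Sigma(H'_n)\|_{C^0}\to 0.
\]
Combining, both $\widehat{d}(\varphi^1_H,\varphi^1_{\widetilde{H}_n})\to 0$ and $\widehat{d}(\varphi^1_{\Sigma(H)},\varphi^1_{\Sigma(\widetilde{H}_n)})\to 0$.

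Now apply Lemma \ref{ConjugationLemma} to each $\widetilde{H}_n$, obtaining $\psi_n\in\Ham(\S^2)$ with $d(\varphi^1_{\widetilde{H}_n},\psi_n^{-1}\varphi^1_{\Sigma(\widetilde{H}_n)}\psi_n)\leq 18\,\Area(\S^2)$. By bi-invariance of $\widehat{d}$ and the triangle inequality,
\[
\widehat{d}(\varphi^1_H,\psi_n^{-1}\varphi^1_{\Sigma(H)}\psi_n) \leq \widehat{d}(\varphi^1_H,\varphi^1_{\widetilde{H}_n}) + 18\,\Area(\S^2) + \widehat{d}(\varphi^1_{\Sigma(\widetilde{H}_n)},\varphi^1_{\Sigma(H)}).
\]
Choose $n$ large enough that the two error terms sum to at most $\Area(\S^2)$, and set $\psi:=\psi_n\in\Ham(\S^2)\subset\widehat{\Ham}(\S^2)$. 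The main conceptual subtlety is that the sequence $\psi_n$ need not converge in $\widehat{\Ham}(\S^2)$, so one cannot pass to a limit to obtain a single $\psi$ achieving the sharp bound $18\,\Area(\S^2)$; however the statement of the corollary allows the extra $\Area(\S^2)$ of slack, so taking $\psi=\psi_n$ for a single sufficiently large $n$ suffices.
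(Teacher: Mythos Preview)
Your proof is correct and follows essentially the same strategy as the paper: approximate $H$ by a Morse function, flatten it (constant near the median so the Conjugation Lemma applies), use the Conjugation Lemma for the flattened function, and then use the triangle inequality together with \emphref{prop:Holder} and \emphref{prop:FlatteningControl} to absorb the approximation errors into the extra $\Area(\S^2)$ of slack. Your version is in fact slightly more explicit than the paper's in justifying why $\widehat{d}(\varphi^1_{\Sigma(\widetilde{H}_n)},\varphi^1_{\Sigma(H'_n)})$ is controlled by $\|\Sigma(\widetilde{H}_n)-\Sigma(H'_n)\|_{C^0}$ (via commutativity of height functions), and in noting why one cannot simply pass to a limit in $\psi_n$.
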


\begin{proof}
Let $H \in C^2_0(\S^2).$ Let $H'$ be Morse with distinct critical values such that \[\norm{H-H'}_{C^2}<\varepsilon_1<1,\] where $\varepsilon_1$ is an arbitrarily small constant which we will choose later. Let us record that
\[
d(\varphi_H^1 ,\varphi_{H'}^1 )<\varepsilon_1.
\]

By flattening control (see Corollary \ref{cor: flattening control} with $H$ replaced by $H'$), we may choose a flattening $\til{H}$ of $H'$ such that $\norm{\widetilde{H}-H'}_{C^0} \leq \varepsilon_2 = \frac{1}{k^2}$ where $k > 1+2 e(H')$  and \[|\Sigma(H')-\Sigma(\til H)|_{C^0} < \frac{1}{k}. \] This implies that \[ \widehat{d}\left(\varphi_{\Sigma(H')}^1 ,\varphi_{\Sigma(\widetilde{H})}^1\right)   \leq \frac{1}{k}.\]



Let us also record that in this case
\[
    \widehat{d}\left(\varphi_{H'}^1 ,\varphi_{\widetilde{H}}^1\right) \leq \frac{1}{k^2}.
\]

Now take $\psi$ from Lemma \ref{ConjugationLemma}. We estimate that
\SSplit{
    \widehat{d}\left(\varphi^1_{{H}},\psi^{-1}\varphi^1_{\Sigma({H})}\psi\right) &\leq \widehat{d}\left(\varphi_H^1,\varphi_{\widetilde{H}}^1\right) +\widehat{d}\left(\varphi^1_{\widetilde{H}},\psi^{-1}\varphi^1_{\Sigma(\widetilde{H})}\psi\right)    \\
    &+\underbrace{\widehat{d}\left(\psi^{-1}\varphi^1_{\Sigma(\widetilde{H})}\psi,\psi^{-1}\varphi^1_{\Sigma(H)}\psi\right)}_{ \widehat{d}\left(\varphi_{\Sigma(\widetilde{H})}^1,\varphi_{\Sigma(H)}^1\right)}.
}
Furthermore,
\[
    \widehat{d}(\varphi_H^1,\varphi_{\widetilde{H}}^1)  \leq
    \widehat{d}(\varphi_H^1,\varphi_{H'}^1) +\widehat{d}(\varphi_{H'}^1,\varphi_{\widetilde{H}}^1) \leq \varepsilon_1 +  \frac{1}{k^2}
\]
and
\SSplit{
    \widehat{d}\left(\varphi_{\Sigma(\widetilde{H})}^1,\varphi_{\Sigma(H)}^1\right)
    &\leq
    \widehat{d}\left(\varphi_{\Sigma(\widetilde{H})}^1,\varphi_{\Sigma(H')}^1\right)+
    \widehat{d}\left(\varphi_{\Sigma(H')}^1,\varphi_{\Sigma(H)}^1\right)\\ 
    &\leq\widehat{d}\left(\varphi_{\Sigma(H')}^1,\varphi_{\Sigma(H)}^1\right) + \frac{1}{k}.
}
By the \emphref{prop:Holder} property  of Theorem \ref{thm: prop},
\[
    \widehat{d}\left(\varphi_{\Sigma(H')}^1,\varphi_{\Sigma(H)}^1\right) \leq C'\varepsilon_1^{\frac{1}{2}}\left(\varepsilon_1 +2\norm{H}_{C_2}\right)^{\frac{1}{2}}.
\]
In total, we obtain
\[
    \widehat{d}(\varphi^1_{{H}},\psi^{-1}\varphi^1_{\Sigma({H})}\psi) \leq 18 \Area(\S^2) + \varepsilon_1 +  \frac{1}{k^2}  +\frac{1}{k} + C'\varepsilon_1^{\frac{1}{2}}\left(\varepsilon_1 +2\norm{H}_{C_2}\right)^{\frac{1}{2}}.
\]
Note that $\psi$ depends on $\widetilde{H}$, ultimately depending on $\varepsilon_1$ and $k$. Taking $\varepsilon_1$ arbitrarily small and $k$ arbitrarily large we get that
\[
    \forall \varepsilon>0, \; \exists \psi \in \widehat{\Ham}(\S^2),\qquad 
    \widehat{d}(\varphi^1_{{H}},\psi^{-1}\varphi^1_{\Sigma({H})}\psi) \leq 18 \Area(\S^2) +\varepsilon.
\]
Taking $\varepsilon<\Area(\S^2)$ we obtain the \emphref{prop:ConjugationControl} property of  Theorem \ref{thm: prop}, but also, if $\Sigma(H)\equiv 0$, we have 
\[
    \widehat{d}\left(\varphi^1_{{H}},\underbrace{\psi^{-1}\varphi^1_{\Sigma({H})}\psi}_{\id}\right) \leq 19 \Area(\S^2),
\]
and since $\Sigma(tH)=t\Sigma(H)$, this proves what remained to show of the \emphref{prop:BoundedGrowthControl} property. \end{proof}


\subsection{The proof of the conjugation lemma}\label{subsec: thm fqm}


The following more flexible version of Lemma \ref{lma: Sik} will be needed in the proof of the conjugation lemma:

\begin{lemma}\label{lma: Sikflex}
    Let $(M,\omega)$ be a closed and connected symplectic surface. Let $\varepsilon>0,$ let $m$ be a positive integer, and let 
    $\mathcal{D}_0,...,\mathcal{D}_m \subset M$ be disjoint topological open disks, such that $ \mathcal{D}_0 $ is of area $\varepsilon$, and 
    for each $ 1 \leqslant i \leqslant m $, the area of $ \mathcal{D}_i $ lies in $ (\varepsilon/2,\varepsilon] $. Finally, let $\psi_j: \mathcal{D}_j \rightarrow \mathcal{D}_0$ for all $1\leq j \leq m$ be symplectic embeddings. Consider $f_0,...,f_m\in \Ham(M,\omega)$ with ${\supp(f_j) \subset \mathcal{D}_j.}$ Define $\Phi,\Phi'\in \Ham(M,\omega)$ by
    \[
    \Phi = f_0f_1\cdots f_m
    \]
    and
    \[
        \Phi' = f_0 \prod_{j=1}^m (\psi_j)_* f_j
    \]
    where $(\psi_j)_*f_j$ is given by $(\psi_j)_* f_j = \psi_j f_j (\psi_j)^{-1}$ on $\psi_j(\mathcal{D}_j) \subset \mathcal{D}_0$ and $(\psi_j)_*f_j = \id$ on $M\setminus \psi_j(\mathcal{D}_j)$. Then
    \[
    d(\Phi,\Phi') < 7\varepsilon.
    \]
\end{lemma}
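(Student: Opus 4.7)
The plan is to deduce Lemma \ref{lma: Sikflex} from Lemma \ref{lma: Sik} by enlarging each disk $\mathcal{D}_j$ for $j \geq 1$ into a disk $\tilde{\mathcal{D}}_j$ of area exactly $\varepsilon$, pairwise disjoint in $M$ and from $\mathcal{D}_0$, and by extending $\psi_j^{-1}$ to a symplectic diffeomorphism $\phi_j : \mathcal{D}_0 \to \tilde{\mathcal{D}}_j$. The key algebraic point, once these extensions are in hand, is that $\phi_j^* f_j$ in the sense of Lemma \ref{lma: Sik} coincides exactly with $(\psi_j)_* f_j$ in the present sense: both equal $\psi_j f_j \psi_j^{-1}$ on $\psi_j(\mathcal{D}_j)$, and both are the identity on $\mathcal{D}_0 \setminus \psi_j(\mathcal{D}_j)$. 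The second equality uses $\supp(f_j) \subset \mathcal{D}_j$, which forces $\phi_j^{-1} f_j \phi_j$ to fix any point whose $\phi_j$-image lies in $\tilde{\mathcal{D}}_j \setminus \mathcal{D}_j$. Hence, after verifying the hypotheses, Lemma \ref{lma: Sik} applied to the enlarged configuration would yield $d(\Phi, \Phi') < 3\varepsilon$ directly.

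The geometric step is to realize the enlarged disks $\tilde{\mathcal{D}}_j \supset \mathcal{D}_j$ of area $\varepsilon$ as pairwise disjoint in $M$ and disjoint from $\mathcal{D}_0$. A priori the collars $\tilde{\mathcal{D}}_j \setminus \mathcal{D}_j$, each of area less than $\varepsilon/2$, may intersect other disks or one another, so the placement cannot always be done for free. I would resolve this with an auxiliary Hamiltonian diffeomorphism $\eta \in \Ham(M, \omega)$ that rearranges the configuration so that the enlargements fit disjointly, without disturbing the interiors of the $\mathcal{D}_j$. Since each collar has area less than $\varepsilon/2$, such an $\eta$ can be chosen with Hofer norm of order $\varepsilon$; by bi-invariance of $d$, conjugating both $\Phi$ and $\Phi'$ by $\eta$ contributes an extra cost of roughly $2\|\eta\|$, which together with the $3\varepsilon$ from Lemma \ref{lma: Sik} should sum to the claimed bound $7\varepsilon$. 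The existence of the abstract symplectic extensions $\phi_j$ once $\tilde{\mathcal{D}}_j$ is fixed is then a standard Moser-type argument, using that $\mathcal{D}_0$ and $\tilde{\mathcal{D}}_j$ have matching area $\varepsilon$ and the map is already prescribed on the open subdisk $\psi_j(\mathcal{D}_j) \subset \mathcal{D}_0$.

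The main obstacle I anticipate is quantifying the Hofer cost of the rearrangement $\eta$ uniformly in the number $m$ of disks, since a naive placement might require moving collars past one another at cost growing with $m$. A clean way around this is to use the two-dimensionality of $M$ to show that all conflicts can be resolved within a subset of total area bounded by a universal multiple of $\varepsilon$, independent of $m$. Should this uniform bound prove delicate, a fallback is to partition $\{1, \ldots, m\}$ into a bounded number of subfamilies whose enlargements can be chosen non-conflicting, and apply Lemma \ref{lma: Sik} iteratively, combining the resulting comparisons by the triangle inequality and bi-invariance. The explicit constant $7$ then emerges from a careful accounting of the $3\varepsilon$ produced by Lemma \ref{lma: Sik} together with the Hofer cost of the geometric rearrangement.
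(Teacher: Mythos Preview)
Your primary plan---enlarge every $\mathcal{D}_j$ simultaneously to area $\varepsilon$ inside $M$, pairwise disjoint and disjoint from $\mathcal{D}_0$---runs into a hard area obstruction that no Hamiltonian rearrangement $\eta$ can circumvent. The hypotheses only force $\Area(M) > \varepsilon + m\varepsilon/2$, whereas packing $m+1$ disjoint disks of area $\varepsilon$ requires $\Area(M) \geq (m+1)\varepsilon$; for $m \geq 2$ these are incompatible in general, so the enlarged family simply need not exist. (A secondary point: if $\eta$ fixes each $\mathcal{D}_j$ pointwise, as you suggest, then conjugating $\Phi$ and $\Phi'$ by $\eta$ leaves both unchanged, and in any case conjugating both sides by the same map is an isometry of $d$, so the ``cost $2\|\eta\|$'' never enters the estimate.)

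Your fallback---partition $\{1,\ldots,m\}$ into a bounded number of subfamilies and iterate Lemma~\ref{lma: Sik}---is exactly what the paper does, and the missing mechanism is a simple area count showing that \emph{two} subfamilies always suffice. For $m=2k$, consider the first half $\mathcal{D}_0,\mathcal{D}_1,\ldots,\mathcal{D}_k$. The disks $\mathcal{D}_{k+1},\ldots,\mathcal{D}_m$ of the second half sit in the complement and contribute total area $>k\varepsilon/2$; since each needed collar has area $<\varepsilon/2$, this is precisely enough room to enlarge $\mathcal{D}_1,\ldots,\mathcal{D}_k$ to disjoint area-$\varepsilon$ disks $\mathcal{D}_j'\supset\overline{\mathcal{D}}_j$ (still disjoint from $\mathcal{D}_0$) and to extend each $\psi_j$ to a symplectomorphism $\mathcal{D}_j'\to\mathcal{D}_0$. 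Lemma~\ref{lma: Sik} then gives $3\varepsilon$ for each half, hence $6\varepsilon$ in total. For $m$ odd one applies the even case to the first $m-1$ indices and treats $f_m$ by hand: since $\overline{\mathcal{D}}_m$ and $\psi_m(\overline{\mathcal{D}}_m)$ are disjoint and each of area $<\varepsilon$, a Hamiltonian of Hofer norm $<\varepsilon/2$ realizes $\psi_m$ on $\mathcal{D}_m$, so $d\bigl(f_m,(\psi_m)_*f_m\bigr)<\varepsilon$ and the total is $<7\varepsilon$. Once you supply this halving-plus-area-count step, the rest of your outline (the identification $\phi_j^*f_j=(\psi_j)_*f_j$, the Moser-type extension) goes through verbatim.
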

\begin{proof}
Without loss of generality we may assume that each $ \mathcal{D}_j $ has a smooth boundary and its area lies in $ (\varepsilon/2,\varepsilon) $, and moreover each $ \psi_j $ extends to a smooth symplectic embedding $ \psi_j: \overline{\mathcal{D}_j} \rightarrow \mathcal{D}_0$.

First consider the case when $ m = 2k $ is even. Look at the disks $ \mathcal{D}_0, \ldots, \mathcal{D}_k $. The area of the complement to the union of these disks is greater than or equal to the sum of areas of $ \mathcal{D}_{k+1}, \ldots, \mathcal{D}_m $, which in turn is greater than $ k\varepsilon/2 $. 
Moreover, recall that the area of each disk $ \mathcal{D}_j $ lies in $ (\varepsilon/2,\varepsilon] $. Hence for each $ 1 \leqslant j \leqslant k $ we can find 
an open disk $ \mathcal{D}_j' \supset \overline{\mathcal{D}}_j$ of area $ \varepsilon $, such that each $ \psi_j $ can be extended to a symplectomorphism 
$ \psi_j : \mathcal{D}_j' \rightarrow \mathcal{D}_0 $, and such that moreover the disks $ \mathcal{D}_0, \mathcal{D}_1', \ldots, \mathcal{D}_k' $ are 
pairwise disjoint. Applying Lemma \ref{lma: Sik}, we conclude that for $ \Phi_0 := f_0 \cdots f_k $ and $ \Phi_0' = f_0 \prod_{j=1}^k (\psi_j)_* f_j $ we have
$ d(\Phi_0,\Phi_0') < 3\varepsilon $. Now, looking at the disks $ \mathcal{D}_0, \mathcal{D}_{k+1}, \ldots, \mathcal{D}_m $, and applying a similar reasoning, for $ \Phi_1 := f_{k+1} \cdots f_m $ and $ \Phi_1' = \prod_{j=k+1}^m (\psi_j)_* f_j $ we get $ d(\Phi_1,\Phi_1') < 3\varepsilon $. Finally, we conclude $ d(\Phi,\Phi') = d(\Phi_0\Phi_1,\Phi_0'\Phi_1') \leqslant d(\Phi_0,\Phi_0') + d(\Phi_1,\Phi_1') < 6\varepsilon < 7\varepsilon $.

It remains to verify the case when $ m = 2k+1 $ is odd. Denote $ \Psi := f_0 f_1\cdots f_{m-1} $ and $ \Psi' = f_0 \prod_{j=1}^{m-1} (\psi_j)_* f_j $. We have shown that $ d(\Psi,\Psi') < 6\varepsilon $. Since the area of $ \mathcal{D}_m $ is less than $ \varepsilon $ and $ \psi_m(\overline{\mathcal{D}}_m) \cap \overline{\mathcal{D}}_m = \emptyset $, we can find a Hamiltonian diffeomorphism $ \phi \in \Ham(M,\omega) $ with $ d(\id,\phi) < \varepsilon/2 $, such that $ \phi = \psi_m $ on $ \mathcal{D}_m $, and in particular $ (\psi_m)_* f_m = \phi^{-1} f_m \phi $, which implies $ d(f_m,  (\psi_m)_* f_m) = d(f_m,\phi^{-1} f_m \phi ) \leqslant 2 d(\id, \phi) < \varepsilon $. We conclude $ d(\Phi,\Phi') = d(\Psi f_m, \Psi' (\psi_m)_* f_m) <  d(\Psi, \Psi') +  d(f_m,  (\psi_m)_* f_m) < 7 \varepsilon $.

\end{proof}

\begin{wrapfigure}{r}{0.4\textwidth}
    \tdplotsetmaincoords{80}{110}
        \begin{tikzpicture}[scale=2,tdplot_main_coords,sphere segment/.style args={%
            phi from #1 to #2 and theta from #3 to #4 and radius #5}{insert path={%
             plot[variable=\x,smooth,domain=#2:#1] 
             (xyz spherical cs:radius=#5,longitude=\x,latitude=#3)
             -- plot[variable=\x,smooth,domain=#3:#4] 
             (xyz spherical cs:radius=#5,longitude=#1,latitude=\x)
             --plot[variable=\x,smooth,domain=#1:#2] 
             (xyz spherical cs:radius=#5,longitude=\x,latitude=#4)
             -- plot[variable=\x,smooth,domain=#4:#3] 
             (xyz spherical cs:radius=#5,longitude=#2,latitude=\x)}}]
            \tdplotsetrotatedcoords{20}{80}{0}
                \draw [ball color=white,very thin,tdplot_rotated_coords] (0,0,0) circle (1) ;
                \draw[thin,fill=red!40!white,fill opacity=0.6,
                sphere segment={phi from -20 to 160 and theta from 17.46 to 44.5  and radius 1}] ;

                \draw [domain=-75:115] plot ({0.953*cos(\x)}, {0.953*sin(\x)},0.3);
                \draw [dashed,domain=115:285] plot ({0.953*cos(\x)}, {0.953*sin(\x)},0.3);
                \draw [domain=-75:115] plot ({0.714*cos(\x)}, {0.714*sin(\x)},0.7);
                \draw [dashed,domain=115:285] plot ({0.714*cos(\x)}, {0.714*sin(\x)},0.7);

                \node at (0,-1.2,0.2) {$C_+$};
                \node at (0,-1.05,0.6) {$C_-$};

                \node at (0,1.4,0.5) {$A_i$};
                \draw [color=black] (0,1.2,0.3) -- (0,1.2,0.7);
                \draw [color=black] (0,1.2,0.3) -- (0,1.1,0.3);
                \draw [color=black] (0,1.2,0.7) -- (0,1.1,0.7);

                \node at (0,1.6,0.85) {$D_-(A)$};
                \draw [color=black] (0,1.2,0.7) -- (0,1.2,1);
                \draw [color=black] (0,1.2,0.7) -- (0,1.1,0.7);
                \draw [color=black] (0,1.2,1) -- (0,1.1,1);

                \node at (0,2.4,0.7) {$D_+(A)$};
                \draw [color=black] (0,2,0.3) -- (0,2,1);
                \draw [color=black] (0,2,0.3) -- (0,1.9,0.3);
                \draw [color=black] (0,2,1) -- (0,1.9,1);

        \end{tikzpicture}
        
\end{wrapfigure}
\textit{Proof of \hyperref[ConjugationLemma]{the conjugation lemma.}}
Decompose $\widetilde{H} = \sum_A \widetilde{H}_A$, like before, where $\left\{A\right\}$ are annuli/disks corresponding to the edges of the Reeb graph of $H'$, and $\widetilde{H}_{A}$ is locally constant outside $A.$ Crucially, since we assumed $r$ is flat near the median level, each $A$ lies in a disk of area $< \frac{1}{2}\Area(\S^2).$ Let $\partial A$ be the union of two disjoint circles, $ C_+, C_-.$ We denote by $D_+(A)$ the disk of area $< \frac{1}{2} \Area({\S}^{2})$ containing $A,$ whose boundary is $C_+.$ Similarly, we denote by $D_-(A)$ the disk of area $< \frac{1}{2} \Area({\S}^{2})$ bounded by $C_-.$ If $A$ is a disk, $D_-(A) =\emptyset$. Let $m$ be the median level of $H'$, and let $\varepsilon>0$ such that $r|_{[m-2\varepsilon,m+2\varepsilon]} = \const$. The connected components of ${\S}^{2} \setminus (H')^{-1}(m-\varepsilon,m+\varepsilon)$ are displaceable, hence one can find a collection of disjoint disks $D_1,...,D_M$ with smooth boundary, each of Area $< \frac{1}{2}\Area({\S}^{2})$ such that 
\[
    {\S}^{2}\setminus(H')^{-1}(m-\varepsilon,m+\varepsilon) \subset \bigcup_j D_j.
\]
Order them such that $\Area(D_1)\geq \Area(D_j)$ for all $j$, and by conjugating with some $\psi \in \Ham({\S}^{2})$, we can assume $D_1$ is the standard cap
\[
D_1 = \{(x,y,z) \in \S^{2} | z> z_1\} ,\qquad z_1 \in \left(0,\frac{1}{2}\right).
\]
Note that the time-1 map is $\varphi_{\widetilde{H}}^1 = \prod_i   \varphi_{\widetilde{H}_i}^1$.
We will now deal the annuli which lie inside and outside $D_1$ separately. Before starting let us define the following.
\begin{defin}
    A collection of annuli $C = \left(A_1,...,A_l\right)$ is called \underline{ordered} if \[D_-(A_i) \supset D_{+}(A_{i+1}), \quad \forall 1 \leq i < l.\]
\end{defin}
 \begin{center}
     
\begin{tikzpicture}[line cap=round,line join=round,>=triangle 45,x=1.0cm,y=1.0cm]

    \draw [name path=B, thick, draw,fill=blue!20](0,0) circle (2cm);    
    \draw [name path=A, thick, draw,fill=white](0,0) circle (1.5cm);   
    \draw [name path=B, thick, draw,fill=blue!20](0,0) circle (1cm);    
    \draw [name path=A, thick, draw,fill=white](0,0) circle (0.5cm);
    
    \draw [name path=B, thick, draw,fill=red!20](6,0) circle (2cm);    
    \draw [name path=A, thick, draw,fill=white](6,0) circle (1.8cm);   
    \draw [name path=B, thick, draw,fill=red!20](5,0) circle (0.5cm);    
    \draw [name path=A, thick, draw,fill=white](5,0) circle (0.3cm);
    \draw [name path=B, thick, draw,fill=red!20](6,1) circle (0.5cm);    
    \draw [name path=A, thick, draw,fill=white](6,1) circle (0.3cm);
    \draw [name path=B, thick, draw,fill=red!20](7,-0.5) circle (0.5cm);    
    \draw [name path=A, thick, draw,fill=white](7,-0.5) circle (0.3cm);
    \draw (0,-2.5) node {Ordered};    
    \draw (6,-2.5) node {Not Ordered};    
    \end{tikzpicture}
 \end{center}

 \begin{rem}
 Note that an ordered collection of annuli is a chain in the partially ordered set of annuli contained in a disk of area $< \frac{1}{2} \Area({\S}^{2}),$ with partial order $A_1 < A_2$ iff $D_-(A_1) \supset D_+(A_2).$
 \end{rem}
\underline{Let us start with the annuli lying on the complement of $D_1$:}\\
Chose $\varepsilon = \frac{1}{2^N}\Area({\S}^{2})$ such that $\varepsilon < \Area(D_+(A))$ for all annuli $A\subset \S^2\setminus D_1$.\\
We will iterate the following step $N-1$ times:\\
For each $j =0,...,N-2$, consider all annuli $A\subset {\S}^{2}\setminus D_1$ such that $\Area(D_+(A)) \in (2^j\varepsilon,2^{j+1}\varepsilon].$ Denote the collection of these annuli by $C_j.$ From the constraint on the area of $D_+(A)$, we can partition $C_j$ to be $C_j = C_{j1} \cup ... \cup C_{jk_j}$, such that each $C_{jk}$ is an ordered collection of annuli. If $A \in C_{jk_1}$ and $B \in C_{jk_2}$ for $k_1 \neq k_2,$ then the disks $D_+(A)$ and $D_+(B)$ are disjoint and are also disjoint from $D_1$. Now we may use  Lemma \ref{lma: Sikflex} in the following way. Denote $\widetilde{H}_{jk} = \sum_{A\in C_{jk}} \widetilde{H}_A$, and for each $1 \leq k \leq k_j$, find $\varphi_{k,j} \in \Ham(\S^{2})$ such that $\varphi_{k,j}^*\widetilde{H}_{jk} $ is a function of $z$, constant outside $D_1.$ Then if we denote
\[
    \widetilde{H}^j := \sum_{k=1}^{k_j} \widetilde{H}_{jk} ,\qquad \widehat{H}^j :=  \sum_{k=1}^{k_j} \varphi_{k,j}^*\widetilde{H}_{jk},
\]
then we have
\[
    d\left(\varphi^1_{\widetilde{H}^j},\varphi^1_{\widehat{H}^j}\right) < 7\cdot 2^{j+1}\varepsilon < 2^{j+4}\varepsilon.
\]
Hence in total, after $N-1$ steps, the functions
\[
    \widetilde{H}_{\text{out}} := \sum_{j=0}^{N-2} \widetilde{H}^j = \sum_{A \subset S^2 \setminus D_1} \widetilde{H}_A ,\qquad
    \widehat{H}_{\text{out}} := \sum_{j=0}^{N-2} \widehat{H}^j,
\]
satisfy 
\begin{equation} \label{eq:H-out}
    d(\varphi^1_{\widetilde{H}_{\text{out}} },\varphi^1_{ \widehat{H}_{\text{out}} }) \leq \sum_{j=0}^{N-2}2^{j+4}\varepsilon < 2^{N+3}\varepsilon = 8 \Area(\S^{2}).
\end{equation}


\underline{For the annuli lying in $D_1$:}\\
Denote $ \widetilde{H}_{\text{in}} := \sum_{A \subset D_1} \widetilde{H}_A $, and by applying the transformation\footnote{Recall that $ R $ is given by $(x,y,z) \mapsto (-x,y,-z)$, via the embedding $\S^{2} \subset \R^3$ as a sphere of radius $1/2$.} $ R \in \Ham(S^2) $, all the relevant annuli are moved outside of $ D_1 $, which enables us to use the considerations of the previous case. More concretely, consider
$$ R^*  \widetilde{H}_{\text{in}} := \sum_{A \subset D_1} R^* \widetilde{H}_A ,$$
where for each annulus $ A \subset D_1 $, the function $ R^* \widetilde{H}_A $ locally constant outside the annulus $ R(A) $ that lies in $ S^2 \setminus D_1 $. Now by applying the arguments from the previous case to the function $ R^*  \widetilde{H}_{\text{in}} $, we obtain a smooth function $ \widehat{H}_{\text{in}} =  \widehat{H}_{\text{in}} (z) $ which is constant outside $ D_1 $, such that 
$$ d(\varphi^1_{R^*\widetilde{H}_{\text{in}} },\varphi^1_{ \widehat{H}_{\text{in}} }) < 8 \Area(\S^{2}). $$
Since in addition we have $$ d(\varphi^1_{R^*\widetilde{H}_{\text{in}} },\varphi^1_{\widetilde{H}_{\text{in}} }) = d(R \varphi^1_{\widetilde{H}_{\text{in}} } R,\varphi^1_{\widetilde{H}_{\text{in}} }) \leq 2 d(R,\id) =  \Area(\S^{2}), $$
we get 
\begin{equation} \label{eq:H-in}
d(\varphi^1_{\widetilde{H}_{\text{in}} },\varphi^1_{ \widehat{H}_{\text{in}} }) < 9 \Area(\S^{2}).
\end{equation}

We have $ \widetilde{H} =   \widetilde{H}_{\text{out}}+\widetilde{H}_{\text{in}} $, and denoting $ \widehat{H} =     \widehat{H}_{\text{out}}+\widehat{H}_{\text{in}} $, by $ (\ref{eq:H-out}) $ and $ (\ref{eq:H-in}) $ we get 
$$ d(\varphi^1_{\widetilde{H}},\varphi^1_{\widehat{H}}) < 17 \Area(\S^{2}). $$
Moreover, we have $ \Sigma (\widetilde{H}) = \frac{1}{2} \widehat{H} + \frac{1}{2} R^*\widehat{H} $ and consequently
$$ d\left(\varphi_{\widehat{H}}^1, \varphi_{\Sigma(\widetilde{H})}^1 \right) = d\left(\varphi_{\widehat{H}}^1, \varphi_{\widehat{H}}^{\frac12} R \varphi_{\widehat{H}}^{\frac12} R \right) \leq 2 d(R,\id) = \Area(\S^{2}). $$
We conclude 
$$ d\left(\varphi_{\widetilde{H}}^1, \varphi_{\Sigma(\widetilde{H})}^1 \right) \leq d(\varphi^1_{\widetilde{H}},\varphi^1_{\widehat{H}}) + d\left(\varphi_{\widehat{H}}^1, \varphi_{\Sigma(\widetilde{H})}^1 \right) < 18 \Area(\S^{2}), $$
thus completing the proof.
%
%
\qed


\section{Discussion}

\subsection{Surfaces other than $S^2$}\label{sec: other surfaces}
The case of the two-disk and compactly supported Hamiltonian diffeomorphisms is very similar to that of the sphere. Furthermore, we expect our methods to prove a growth dichotomy statement for all compact surfaces (with or without boundary). We plan to attack this question by means of symmetrization based on partial quasi-morphisms from \cite{MT}, which should apply to Hamiltonians with all regular components of level sets contractible. Hamiltonians $H$ without this property are easily seen to have linear Hofer growth $\r(H)>0$ for instance by the energy capacity inequality in the universal cover (cf. \cite[Exercise 7.2.E]{P-book}).

To make this more concrete, take a closed symplectic surface $(M,\omega)$ of total area $1$. We shall use the following ``Cartan subalgebra" in $C^0(M)$. Fix a symplectic embedding of the open Euclidean disk of area $1,$ $$\imath: (D^2, dp \wedge dq) \to (M,\omega),$$
where $M \setminus \imath(D^2)$ is a collection of closed simple curves on $M$. Think of the interior of the standard fundamental domain of $M$ on the universal cover. Write $z = (p^2+q^2)/2$ for the symplectic polar radius. Denote by $\mathcal{F} \subset C^0_0(M)$ the set of continuous mean zero functions on $M$ of the form $\imath_*F - \int_{D^2} F \omega$, where $F$ runs over the set of smooth compactly supported functions on $D^2$ depending only on $z$. 

Denote by $\mathcal{C}$ the set of mean zero autonomous Hamiltonians on $M$
whose  regular level sets are contractible. Define
the symmetrization map $\Sigma: \mathcal{C} \to  \mathcal{F}$. For suitably flattened functions (as in Lemma \ref{lem:flattening-conv}, see also Remark \ref{rem: flattening other} below) it is defined analogously to the case of $S^2$, and in the general case one uses the local quasi-morphisms constructed in \cite{MT} similarly to what we did in Section \ref{subsec: symm qm} above. 

The conclusion is that for $H \in \mathcal{C}$ the growth is linear if $\Sigma(H) \neq 0$,
and it is bounded if $\Sigma(H)=0$. As explained above, this proves a growth dichotomy statement for arbitrary autonomous Hamiltonians on $M.$ 
We expect that an enhanced dichotomy also holds in this setting. The details will appear in a forthcoming paper. 

\begin{rem}\label{rem: flattening other}
Crucially, the flattening of Lemma \ref{lem:flattening-conv} would allow us to proceed with the proof of the soft part of the dichotomy on $ M $, since given $ H $ with contractible regular level sets, we can directly pass to $ r \circ H $ without the need to approximate $ H $ by a Morse function with contractible regular level sets. 
\end{rem}


\subsection{Quasi-morphisms and autonomous Hamiltonian diffeomorphisms}
We note that, as discussed in Remark \ref{rem:deltas}, given measures $\delta_i = \delta_{k_i,B_i}$ on $(-1/2,1/2)$ such that $\sum a_i \delta_i = 0$ for certain $a_i \in \R,$ 
the quasimorphism \begin{equation}\label{eq mu}\mu = \sum a_i \mu_{k_i,B_i}: \Ham(S^2) \to \R\end{equation} vanishes on the subgroup $\cl T$ of $\Ham(S^2)$ given by autonomous Hamiltonians generated by functions of the height $z$ only. As $\mu$ is Lipschitz in the Hofer metric $d,$ it extends to the Hofer completion $\wh{\Ham}(S^2)$ and vanishes on $\wh{\cl T}.$ Therefore, it follows from Theorem \ref{thm-2} that $\mu$ vanishes on the subset $\Aut(S^2) \subset \Ham(S^2)$ of all autonomous Hamiltonian diffeomorphisms (those generated by time-independent Hamiltonians $H \in \sm{S^2,\R}$). It is also follows immediately from \cite[Theorem 1.7]{EPP} that $\mu$ is continuous with respect to the $C^0$ topology on $\Ham(S^2)$ and extends to the $C^0$-closure $\ol{\Ham}(S^2) $ of $\Ham(S^2)$ inside $\mrm{Homeo}(S^2).$ A particular example of such a quasimorphism $\mu$ is given by \[\mu = 3\mu_{3,B}-2\mu_{2,B}-\mu_{1,1/2}\] for $1/3<B<1/2,$ where $\mu_{1,1/2}$ is the Entov-Polterovich quasimorphism \cite{EP-qm}.

In future work joint with P. Haim-Kislev we plan to attack the following question.

\begin{question}\label{question mu}
Do all the quasimorphisms $\mu: \Ham(S^2) \to \R$ from \eqref{eq mu} vanish identically?
\end{question}

We note that if one such $\mu$ does not vanish, then the Hofer distance \[\mrm{aut}(S^2) = \sup_{\phi \in \Ham(S^2)} d(\phi, \Aut(S^2))\] is infinite, which would settle the last open case  of a conjecture of Polterovich-Shelukhin \cite{PSh-pmod} for surfaces. Moreover, for all natural $k,$ \[\mrm{aut}_k(S^2) = \sup_{\phi \in \Ham(S^2)} d(\phi, \Aut(S^2)^k) = \infty,\] similar to the case of $L^p$-metrics as in \cite{BS-aut}.

If on the other hand all such $\mu$ vanish, then the symmetrization map \[\Aut(S^2) \to \cl T_{ev}\] extends to all $\Ham(S^2),$ and it would be very interesting to study its properties.

\bibliographystyle{plain}
\bibliography{Symmrefs}

\begin{thebibliography}{10}

\bibitem{BFR}
A.~M. Bloch, H.~Flaschka, and T.~Ratiu.
\newblock A {S}chur-{H}orn-{K}ostant convexity theorem for the diffeomorphism
  group of the annulus.
\newblock {\em Invent. Math.}, 113(3):511--529, 1993.

\bibitem{BS-aut}
M.~Brandenbursky and E.~Shelukhin.
\newblock On the {$L^p$}-geometry of autonomous {H}amiltonian diffeomorphisms
  of surfaces.
\newblock {\em Math. Res. Lett.}, 22(5):1275--1294, 2015.

\bibitem{BD}
T.~Br\"{o}cker and T.~tom Dieck.
\newblock {\em Representations of compact {L}ie groups}, volume~98 of {\em
  Graduate Texts in Mathematics}.
\newblock Springer-Verlag, New York, 1995.
\newblock Translated from the German manuscript, Corrected reprint of the 1985
  translation.

\bibitem{Buh}
L.~Buhovsky.
\newblock On two remarkable groups of area-preserving homeomorphisms.
\newblock {\em Zurnal matematiceskoj fiziki, analiza, geometrii},
  19(2):339--373, 2023.

\bibitem{CHMSS2}
D.~Cristofaro-Gardiner, V.~Humili\`ere, C.~Y. Mak, S.~Seyfaddini, and I.~Smith.
\newblock Subleading asymptotics of link spectral invariants and homeomorphism
  groups of surfaces.
\newblock Preprint arXiv:2206.10749, 2022.

\bibitem{CHMSS}
D.~Cristofaro-Gardiner, V.~Humili\`ere, C.~Y. Mak, S.~Seyfaddini, and I.~Smith.
\newblock Quantitative {H}eegaard {F}loer cohomology and the {C}alabi
  invariant.
\newblock {\em Forum Math. Pi}, 10:Paper No. e27, 59, 2022.

\bibitem{EP-qm}
M.~Entov and L.~Polterovich.
\newblock Calabi quasimorphism and quantum homology.
\newblock {\em Int. Math. Res. Not.}, (30):1635--1676, 2003.

\bibitem{EPP}
M.~Entov, L.~Polterovich, and P.~Py.
\newblock On continuity of quasimorphisms for symplectic maps.
\newblock In {\em Perspectives in analysis, geometry, and topology}, volume 296
  of {\em Progr. Math.}, pages 169--197. Birkh\"{a}user/Springer, New York,
  2012.
\newblock With an appendix by M. Khanevsky.

\bibitem{H}
H.~Hofer.
\newblock On the topological properties of symplectic maps.
\newblock {\em Proc. Roy. Soc. Edinburgh Sect. A}, 115(1-2):25--38, 1990.

\bibitem{izosimov2016coadjoint}
A.~Izosimov, B.~Khesin, and M.~Mousavi.
\newblock Coadjoint orbits of symplectic diffeomorphisms of surfaces and ideal
  hydrodynamics.
\newblock {\em Ann. Inst. Fourier (Grenoble)}, 66(6):2385--2433, 2016.

\bibitem{LM}
F.~Lalonde and D.~McDuff.
\newblock The geometry of symplectic energy.
\newblock {\em Ann. of Math. (2)}, 141(2):349--371, 1995.

\bibitem{MT}
C.~Y. Mak and I.~Trifa.
\newblock Hameomorphism groups of positive genus surfaces.
\newblock Preprint arXiv:2306.06377, 2023.

\bibitem{Nirenberg}
Louis Nirenberg.
\newblock {\em Topics in nonlinear functional analysis}, volume~6 of {\em
  Courant Lecture Notes in Mathematics}.
\newblock New York University, Courant Institute of Mathematical Sciences, New
  York; American Mathematical Society, Providence, RI, 2001.
\newblock Chapter 6 by E. Zehnder, Notes by R. A. Artino, Revised reprint of
  the 1974 original.

\bibitem{PPS}
I.~Polterovich, L.~Polterovich, and V.~Stojisavljevi\'c.
\newblock Persistence barcodes and {L}aplace eigenfunctions on surfaces.
\newblock {\em Geom. Dedicata}, 201:111--138, 2019.

\bibitem{P-ETDS}
L.~Polterovich.
\newblock Symplectic displacement energy for {L}agrangian submanifolds.
\newblock {\em Ergodic Theory Dynam. Systems}, 13(2):357--367, 1993.

\bibitem{P-book}
L.~Polterovich.
\newblock {\em The geometry of the group of symplectic diffeomorphisms}.
\newblock Lectures in Mathematics ETH Z\"{u}rich. Birkh\"{a}user Verlag, Basel,
  2001.

\bibitem{P-groups}
L.~Polterovich.
\newblock Floer homology, dynamics and groups.
\newblock In {\em Morse theoretic methods in nonlinear analysis and in
  symplectic topology}, volume 217 of {\em NATO Sci. Ser. II Math. Phys.
  Chem.}, pages 417--438. Springer, Dordrecht, 2006.

\bibitem{PR-book}
L.~Polterovich and D.~Rosen.
\newblock {\em Function theory on symplectic manifolds}, volume~34 of {\em CRM
  Monograph Series}.
\newblock American Mathematical Society, Providence, RI, 2014.

\bibitem{PSh-pmod}
L.~Polterovich and E.~Shelukhin.
\newblock Autonomous {H}amiltonian flows, {H}ofer's geometry and persistence
  modules.
\newblock {\em Selecta Math. (N.S.)}, 22(1):227--296, 2016.

\bibitem{PSh}
L.~Polterovich and E.~Shelukhin.
\newblock Lagrangian configurations and {H}amiltonian maps.
\newblock {\em Compos. Math.}, 159(12):2483--2520, 2023.

\bibitem{PS}
L.~Polterovich and K.~F. Siburg.
\newblock On the asymptotic geometry of area-preserving maps.
\newblock {\em Math. Res. Lett.}, 7(2-3):233--243, 2000.

\bibitem{PS07}
Leonid Polterovich and Mikhail Sodin.
\newblock Nodal inequalities on surfaces.
\newblock {\em Math. Proc. Cambridge Philos. Soc.}, 143(2):459--467, 2007.

\bibitem{Schlenk_2003}
F.~Schlenk.
\newblock Symplectic embeddings of ellipsoids.
\newblock {\em Israel Journal of Mathematics}, 138(1):215--252, 2003.

\bibitem{Sik}
J.-C. Sikorav.
\newblock Syst\`{e}mes hamiltoniens et topologie symplectique.
\newblock 1990.
\newblock Universit\`{a} di Pisa.

\bibitem{V}
C.~Viterbo.
\newblock Symplectic topology as the geometry of generating functions.
\newblock {\em Math. Ann.}, 292(4):685--710, 1992.

\bibitem{Yorke_1969}
J.~A. Yorke.
\newblock Periods of periodic solutions and the lipschitz constant.
\newblock {\em Proceedings of the American Mathematical Society},
  22(2):509--512, 1969.

\end{thebibliography}

\end{document}